\documentclass[a4paper]{article}
\usepackage{fullpage}
\usepackage{setspace}
\singlespacing
\usepackage{mathtools}
\usepackage{bbm}
\usepackage{amsmath}
\usepackage{amsfonts}
\usepackage{graphicx}
\usepackage{cancel}
\usepackage{hyperref}
\usepackage{courier}
\usepackage{subfigure}
\usepackage{color}
\usepackage[english]{babel}
\usepackage{mathbbol}
\usepackage{multirow}
\usepackage{subfigure}
\usepackage{latexsym}
\usepackage{pdfsync}
\usepackage{epsfig}
\usepackage{subfigure}
\usepackage{empheq}
\definecolor{Blue}{rgb}{0.3,0.3,0.9}
\usepackage{amsmath}
\usepackage{amsthm,amsbsy}
\usepackage{mathcomp}
\usepackage{textcomp}

\usepackage{amssymb}
\usepackage{graphicx}
\usepackage{graphicx}
\usepackage{dcolumn}
\usepackage{bm}
\usepackage{amsfonts}
\usepackage{latexsym}
\usepackage{pdfsync}
    \usepackage{fullpage} 
\usepackage{colordvi}
\usepackage{color}
\usepackage{booktabs}
\usepackage{graphicx}
\usepackage{subfigure}
\usepackage{stmaryrd}
\usepackage{authblk}
\usepackage{cite}
\usepackage[linesnumbered,commentsnumbered,ruled]{algorithm2e}
\usepackage{epstopdf}
\input xy
\xyoption{all}

\newtheorem{thm}{Theorem}[section]

\newtheorem{prop}[thm]{Proposition}

\newtheorem{rem}[thm]{Remark}
\newtheorem{alg}[thm]{Algorithm}

\newcommand{\commentout}[1]{}

\newcommand{\nwc}{\newcommand}

\renewcommand{\1}{\mathbb{1}}

\nwc{\bR}{\mb R}
\nwc{\bH}{{\mb H}}
\nwc{\bxp}{{{\mathbf x}}}
\nwc{\bap}{{{\mathbf y}}}

\nwc{\bPhi}{\mathbf{\Phi}}
\nwc{\bPsi}{\mathbf{\Psi}}
\nwc{\bh}{\mathbf h}

\nwc{\bI}{\mathbf I}
\nwc{\bP}{\mathbf P}
\nwc{\bs}{\mathbf s}
\nwc{\bd}{\mathbf{d}}
\nwc{\bX}{\mathbf X}

\nwc{\om}{\beta'}

\nwc{\nwt}{\newtheorem}
\nwc{\xp}{{x^{\perp}}}
\nwc{\yp}{{y^{\perp}}}
\nwt{remark}{Remark}
\nwt{definition}{Definition}
\nwt{corollary}{Corollary} 

\nwc{\ba}{{\mb a}}
\nwc{\bal}{\begin{align}}
\nwc{\ben}{\begin{equation*}}
\nwc{\beqq}{\begin{equation}}
\nwc{\bea}{\begin{eqnarray}}
\nwc{\beq}{\begin{eqnarray}}
\nwc{\bean}{\begin{eqnarray*}}
\nwc{\beqn}{\begin{eqnarray*}}
\nwc{\beqast}{\begin{eqnarray*}}

\nwc{\eal}{\end{align}}
\nwc{\een}{\end{equation*}}
\nwc{\eeqq}{\end{equation}}
\nwc{\eea}{\end{eqnarray}}
\nwc{\eeq}{\end{eqnarray}}
\nwc{\eean}{\end{eqnarray*}}
\nwc{\eeqn}{\end{eqnarray*}}
\nwc{\eeqast}{\end{eqnarray*}}

\nwc{\vep}{\varepsilon}
\nwc{\ep}{\epsilon}
\nwc{\ept}{\epsilon}
\nwc{\vrho}{\varrho}
\nwc{\orho}{\bar\varrho}
\nwc{\ou}{\bar u}
\nwc{\vpsi}{\varpsi}
\nwc{\lamb}{\lambda}
\nwc{\Var}{{\rm Var}}

\nwt{proposition}{Proposition}
\nwt{theorem}{Theorem}
\nwt{summary}{Summary}
\nwt{lemma}{Lemma}
\nwc{\nn}{\nonumber}
\nwc{\mf}{\mathbf}
\nwc{\mb}{\mathbf}
\nwc{\ml}{\mathcal}

\nwc{\IA}{\mathbb{A}} 
\nwc{\bi}{\mathbf i}
\nwc{\bo}{\mathbf o}
\nwc{\IB}{\mathbb{B}}
\nwc{\IC}{\mathbb{C}} 
\nwc{\ID}{\mathbb{D}} 
\nwc{\IM}{\mathbb{M}} 
\nwc{\IP}{\mathbb{P}} 
\nwc{\II}{\mathbb{I}} 
\nwc{\IE}{\mathbb{E}} 
\nwc{\IF}{\mathbb{F}} 
\nwc{\IG}{\mathbb{G}} 
\nwc{\IN}{\mathbb{N}} 
\nwc{\IQ}{\mathbb{Q}} 
\nwc{\IR}{\mathbb{R}} 
\nwc{\IT}{\mathbb{T}} 
\nwc{\IZ}{\mathbb{Z}} 

\nwc{\cE}{{\ml E}}
\nwc{\cP}{{\ml P}}
\nwc{\cQ}{{\ml Q}}
\nwc{\cL}{{\ml L}}
\nwc{\cX}{{\ml X}}
\nwc{\cW}{{\ml W}}
\nwc{\cZ}{{\ml Z}}
\nwc{\cR}{{\ml R}}
\nwc{ \cV}{{\ml V}}
\nwc{\cT}{{\ml T}}
\nwc{\crV}{{\ml L}_{(\delta,\rho)}}
\nwc{\cC}{{\ml C}}
\nwc{\cO}{{\ml O}}
\nwc{\cA}{{\ml A}}
\nwc{\cK}{{\ml K}}
\nwc{\cB}{{\ml B}}
\nwc{\cD}{{\ml D}}
\nwc{\cF}{{\ml F}}
\nwc{\cJ}{{\ml J}}
\nwc{\cS}{{\ml S}}
\nwc{\cM}{{\ml M}}
\nwc{\cN}{{\ml N}}
\nwc{\cG}{{\ml G}}
\nwc{\cH}{{\ml H}}
\nwc{\bk}{{\mb k}}
\nwc{\bn}{{\mb n}}
\nwc{\cbz}{\overline{\cB}_z}
\nwc{\supp}{{\hbox{supp}}}
\nwc{\fR}{\Re}
\nwc{\bY}{\mathbf Y}

\nwc{\pft}{\cF^{-1}_2}
\nwc{\bU}{{\mb U}}
\nwc{\bG}{{\mb G}}
\nwc{\bg}{\mathbf{g}}
\nwc{\mbB}{\mathbf{B}}
\nwc{\mbc}{\mathbf{c}}
\nwc{\mbm}{\mathbf{m}}
\nwc{\mbM}{\mathbf{M}}
\nwc{\mbf}{\mathbf{f}}
\nwc{\mbg}{\mathbf{g}}
\nwc{\mbgt}{\widetilde{\mathbf{g}}}
\nwc{\mbh}{\mathbf{h}}
\nwc{\mbP}{\mathbf{P}}
\nwc{\mbe}{\mathbf{e}}
\nwc{\be}{\mathbf{e}}
\nwc{\Om}{\beta'}
\nwc{\ind}{\operatorname{I}}
\nwc{\mbx}{\mathbf{f}}
\nwc{\bb}{\mathbf{g}}
\nwc{\xmax}{f_{\rm max}}
\nwc{\xmin}{f_{\rm min}}
\nwc{\suppx}{\hbox{\rm supp} (\mbf)}
\nwc{\by}{\mathbf{h}}
\nwc{\bZ}{\mathbf{Z}}
\nwc{\bF}{\mathbf{F}}
\nwc{\bE}{\mathbf{E}}
\nwc{\bV}{\mathbf{V}}
\nwc{\cI}{\IZ^2_N}
\nwc{\chis}{{\chi^{\rm s}}}
\nwc{\chii}{{\chi^{\rm i}}}
\nwc{\pdfi}{{f^{\rm i}}}
\nwc{\pdfs}{{f^{\rm s}}}
\nwc{\pdfii}{{f_1^{\rm i}}}
\nwc{\pdfsi}{{f_1^{\rm s}}}
\nwc{\thetatil}{{\tilde\theta}}
\nwc{\red}{\color{red}}
\nwc{\prox}{\hbox{prox}} 
\nwc{\diag}{\hbox{\rm diag}}
\nwc{\sloc}{J_{\rm f}}
\nwc{\bu}{\xi}
\nwc{\bv}{\beta}
\nwc{\cU}{\mathcal{U}}
\nwc{\bN}{\mathbf{N}}
\nwc{\bw}{\mathbf{w}}
\nwc{\im}{i}
\nwc{\bom}{\mathbf{w}}
\nwc{\bt}{\mathbf{t}}
\nwc{\z}{y}
\nwc{\cY}{\mathcal{Y}}

\title{Matrix balancing based interior point methods for point set matching problems\thanks{Submitted to the editors DATE.
Funding: The research  of the  author is supported  by grant 110-2115-M-005-007-
MY3 from the Ministry of Science and Technology, Taiwan.
}
}



\author[1]{Janith Wijesinghe\footnote{Email address: janithuop@gmail.com}}
\author[2]{Pengwen Chen\footnote{Corresponding author. Email address: pengwen@nchu.edu.tw, pengwen@email.nchu.edu.tw}}
\affil[1,2]{Applied Mathematics, National Chung Hsing University, Taichung, 402,  Taiwan.
}

\begin{document}

\maketitle

\begin{abstract} 
Point sets matching problems can be handled  by optimal  transport. The mechanism behind it is that  
optimal transport recovers  the  point-to-point correspondence  associated with  the  least curl deformation.  Optimal transport is a special form of linear programming with dense constraints. Linear programming can be handled by interior point methods, provided that  the involved  ill-conditioned Hessians can be computed accurately. 
During the decade,   matrix balancing has been employed to  compute optimal transport under  entropy regularization approaches. 
The solution quality  relies on two factors: the accuracy of matrix balancing and the boundedness of the dual vector. 
High accurate matrix balancing is achieved by the application of  Newton methods on a sequence of matrices along  a central path.
 In this work, we  apply
   sparse support constraints to
   matrix-balancing based   interior point methods, in which   the sparse  set fulfilling total support   is iteratively updated to truncate the domain of the transport plan.   Total support  condition  is one crucial condition, which  guarantees the existence of matrix balancing as well as the boundedness of the dual vector. 
\end{abstract}

\textbf{Keywords:}
 Optimal  transport,  interior point methods, matrix balancing, 
 negative entropy, point-set matching problems


\section{Introduction}
Registration aims to  match two or more sets of image data altered by geometric transforms, taken
at different times,  or from different sensors. 
Point set representing  image data is commonly employed  to reduce the computational load in computer vision. The associated point-set matching problem(registration) 
 is to establish a consistent
point-to-point correspondence between two point sets and to estimate the
spatial alignment transformation. The quality of correspondence   plays a crucial role in estimating followup transformations in registration. 
The iterative closest point (ICP)
algorithm is one classic and  popular  approach in feature-based image registration \textcolor{black}{ problems},
because of its simplicity~\cite{RefWorks:McKay}. For correspondence correctness,
the ICP algorithm  requires sufficient overlap between the point
sets. Its vulnerability in performance also includes the proneness to outliers.
 To alleviates these difficulties,
researchers describe  the correspondence by a permutation matrix, which minimizes some ``distance" of the point-sets, typically consisting of  one regularization term for transformations and one assignment term for correspondence.
 For instance,   Chui and Rangarjan proposed a robust point matching  method(RPM), which
estimates non-rigid transformation and correspondence simultaneously, where the point-to-point correspondence is  enforced  by   Sinkhorn matrix balancing\cite{RefWorks:ChuiCVPR}.  This can be viewed as one early application of optimal transport in registration. 
Comprehensive surveys  of  traditional registration methods can be found in~\cite{Maintz} and \cite{RefWorks:Zitova}.


Two unlabeled point sets can be regarded as two histograms, whose distance can be fast evaluated by 
various information divergences, e.g., Hellinger distances,  Kullback-Leibler divergences and Jensen Shannon divergences. From a perspective of  correspondence retrieval, 
 a natural  choice is  Wasserstein distance (also known as the earth mover's distance \cite{Rubner2000}).
Wasserstein  distance  quantifies the minimal cost of moving the probability
mass from one distribution to the other distribution.
In the 1780's,   Monge described a problem of
transporting a pile of soil with the least amount of work. 
In the
1940's,   Kantorovich \cite{RefWorks:Kant} employed a dual variation
principle to convert  the original nonlinear problem into a linear programming problem and to
study the optimal solutions.  A
survey of theoretical works  on  this problem can be found in
\cite{RefWorks:Evans}\cite{Vill1} or \cite{Vill2}. 
Nowadays, optimal transport has been applied  in various tasks, including   image retrieval,   image registration, image morphing, shape matching and maching learning, see~\cite{WP}, \cite{RefWorks:Kaijser}, \cite{RTG}, \cite{ZYHT}, \cite{RefWorks:Rabin},\cite{Peyre2019},\cite{Schmitzer2015}, \cite{RefWorks:Museyko},\cite{Chen2013},\cite{Kolouri2017},\cite{Cuturi2014}.

  In the application of point-set registration, 
 we can incorporate optimal transport  in  feature based methods to estimate the transforms and the correspondence in the existence of  outliers,  for instance,   Hellinger distances based point set matching model (HD)\cite{JMIV}.
The HD model  can be regarded as an approximation of  optimal transport,  when the kernel scale tends to infinity. With a finite kernel scale,  the measure preserving constraint is relaxed to   tolerate  the existence of outliers.   
The effectiveness  of this application  generally depends on the hypothesis of  geometric  transforms.  A fundamental question is, for which class of transformations the underlying  point correspondence can be reconstructed correctly?
  Impressively, when the transformation  can be expressed as  the gradient of some convex function,   the underlying correspondence can be recovered correctly by solving  the $L^2$ optimal  transport problem. The set of transformations   includes  scalings,  translations,  positive definite affine transforms and other curl-free maps.
    This property makes optimal transport  models suitable and robust in certain
applications. For instance, 
 \cite{Chen2013} applies the optimal mass transport model  to match  lung vessel branch points,  which  are   extracted from  two computed tomography(CT) lung images
acquired during breath-holds.  Although the physical deformation field is rather large and complex, the correspondence reconstruction is surprisingly almost perfect, which  verifies the superiority of the optimal transport model. 

Despite of the theoretical advantage, 
 optimal transport is   limited by its heavy computational  requirement in practical applications.
 Briefly, as one member of linear programming,  optimal transport can be solved by various algorithms in linear programming. 
Standard  algorithms  include the simplex method and the interior point method~\cite{Robere2012}\cite{Luenberger2016}\cite{Gondzio2012}.  
Nowadays  the primal-dual interior method is  an efficient interior point method in solving linear programming, when the problem size is moderate~\cite{Wright1997}. 
Thanks to  second-order convergence in each sub-problem,  an interior point method can quickly generate accurate solutions from proper matrix-free algorithms.  For instance, 
in the community of machine learning,  Wasserstein barycenter  is one average of multiple discrete probability measures in terms of Wasserstein distance\cite{YangLST21, Ge2019},
where accurate solutions can be computed by interior point methods~\cite{Ge2019}. 
  In considering the flexibility of handling transformation and correspondence simultaneously,   we  focus on 
    the negative entropy function as a regularizer to handle optimal transport in the registration problem. This regularization   elegantly converts  optimal transport  to one matrix balancing task. Actually,  matrix balancing algorithms are known as an  effective tool  to produce one approximation of the optimal transport plan~\cite{Cuturi2013}\cite{Benamou2015}\cite{Karlsson2017}\cite{Schmitzer2019}. The major numerical tool is the Sinkhorn balancing algorithm\cite{Sinkhorn}\cite{Knopp1967}. To improve the convergence speed of Sinkhorn algorithm,  the $\epsilon$-scaling heuristic  and the kernel truncation are introduced to reduce the number of iterations and the number of  variables to reduce the computational load\cite{Schmitzer2019}.

 \subsection{Contributions}
 This paper is concerned with  the application of this matrix balancing based interior point methods in solving point-set matching problems. 
 The main question  is whether we can  develop  a proper central path for  discrete optimal transport approximations with small regularization parameters? 
The contribution can be summarized as follows.  
 First, we investigate the application of Newton methods in 
  matrix balancing based interior point methods for optimal transport.   
  Although Sinkhorn  balancing algorithm is popular and widely used in balancing  matrices, it is generally  difficult  to produce   an accurate result quickly for  our application. 
  In this paper,  
 we propose Sinkhorn-Newton Negative entropy  interior point methods(SNNE) in~\ref{SNNE_sec}, where 
 Newton directions is computed  by  matrix-free conjugate gradient methods.
  One underlying challenging is that   as the central path heads toward an optimal permutation solution,  
  the rank of the associated Schur complement matrices  reduces to $n$, where
      $n$ is  the point cardinality in each point-set.
During the rank-reduction process,    it is numerically  challenging to maintain the accuracy of Newton iterates.
To overcome this, we adopt the techniques proposed in the stabilized scaling algorithm~\cite{Schmitzer2019}, including computations in the Log-domain and the translation of scaling vectors.  See section~\ref{sec3.4}. 
  
Second, we revisit  a few matrix balancing algorithms, including  the  Knight-Ruiz(KR) fixed point method~\cite{Knight2012}.
 Our matrix-balancing  experiments confirm the  excellent performance of KR algorithm, although its  global convergence is unclear.
   To reveal the connection between KR and other Newton  methods, we 
 introduce  one  convex  function for matrix balancing task and propose 
  a novel modified Newton method, called LB algorithm.
  The  KR algorithm
  is  the modified Newton method with step size $1$. Theorem~\ref{LBstepsize} indicates that  when LB is applied to  a matrix with total support, 
     the step size  will be  $1$, as the iterates  get close to  an optimal solution. 

%

Third, as in the kernel truncation method\cite{Schmitzer2019},
   sparse support sets can be  imposed to   reduce the memory requirement  in the application of  the interior point methods  to large-scale problems.
   However, the truncated kernel matrix does not always have total support, which is crucial 
    to guarantee the quality of  matrix balancing computation and the boundedness of the scaling vectors.
      In Prop.~\ref{total}, we propose one  simple method to construct  one sparse support set with total support, and 
      propose SNNE-sparse in Alg.~\ref{SNNEalg}, which are cable of   handling  large-scale  matching problems.
        To evaluate sparse support matrix balancing  methods,
        Theorem~\ref{thm1} gives one  error bound estimate, which relates the boundedness of the dual vector to the duality measure estimate.
         According to Remark~\ref{3.6}, the boundedness of the dual vector can be ensured, if  the truncated matrix satisfies the  total support  condition.

This paper is organized as follows.
In section 2,  we describe  the application of   optimal transport in point-set registration.
Discrete optimal transport can be solved by matrix balancing based  interior point methods, including SNNE and SNNE-sparse. 
In section 3, we describe a few matrix balancing schemes, including Sinkhorn-Knopp balancing, Knight-Ruiz scheme and other Newton methods. Matrix balancing can be achieved through minimizing  a convex function. 
In section 4, we present a few numerical simulations, which \textcolor{black}{demonstrate} the effectiveness of the proposed algorithms SNNE and SNNE-sparse. 

\subsection{Notations}\label{notation}
 In this paper,  let $\langle x, y \rangle$ denote the inner product between $x,y$ in $\IR^n$.  
  For  a vector $x\in \IR^n$ and a scalar $\epsilon\in \IR$, let $y=(x>\epsilon)$ denote a zero-one vector, i.e., for $i=1,\ldots, n$, set $y_i=1$ if $x_i>\epsilon$,  and set $y_i=0$ otherwise. \textcolor{black}{For simplicity of notation, the functions $\exp$ and $\log$ are extended to vector spaces $\IR^n$ by componentwise application to all components: $(\exp( x))_i=\exp( x_i)$, $(\log x)_i=\log x_i$, $i=1,\ldots, n$. Likewise, let  $x^{-1}$ be the vector whose entries are  $x_i^{-1}$. Let the operator $\odot$ denote entrywise multiplication, e.g., $x\odot y\in \IR^n$ and $(x\odot y)_i=x_i y_i$.}
     Let $\1_n=[1,1,\ldots, 1]^\top\in \IR^n$  be the vector whose entries are all one.
Let $[x;y]$ denote the stacked vector $[x^\top, y^\top]^\top$ for any two vectors $x,y$. The norm $\|\cdot \|$ represents the 2-norm.  
Let $\IT$ be the reshape operator $x\in \IR^{n^2}\to \IR^{n\times n}$, $\IT(x)\in \IR^{n\times n}$, $\IT(x)_{i,j}=x_{in+j}$ for $i,j\in \{1,\ldots, n\}$. In addition, for the sake of simplicity,  $x_{i,j}$ stands for $\IT(x)_{i,j}$ if no confusion occurs. 
Let $\Pi_n$ denote the set of doubly stochastic matrices, i.e., row stochastic and column stochastic $\IT(x) \1_n=\1_n=\IT(x)^\top \1_n$ for \textcolor{black}{ each $\IT(x)\in \Pi_n$}. Finally,   $A^\dagger$ stands for the pseudo inverse of  a matrix $A$.

\section{Optimal transport}

\subsection{Matching point-sets  under deformations}
We first review the deformation characterization of optimal transport  applied on the point-set matching problems  in  the previous work\cite{Chen2013}. 
 The primary  focus of the point set matching  is  the reconstruction of 
 the correspondence between   two unlabeled point-sets
 $\{z_i\}_{i=1}^n\subset\Omega$ and $\{y_i\}_{i=1}^n\subset  T(\Omega)$, where
 $T$ is some  injective and orientation-preserving deformation
   on a bounded open connected subset 
$\Omega$ of $\IR^3$.  
The correspondence  can be described by a permutation 
 $\tau$  such that  $y_i=T(z_{\tau(i)} )$ and some  optimal condition hold for $\tau$. One natural criterion is   the minimization problem: 
\begin{equation} \label{tau} 
\min_{\tau} \sum_{i=1} ^n \|y_i- z_{\tau (i)} \|^2.
\end{equation} 
This is a discrete combinatorial optimization problem,  because  $n!$ possibilities must be evaluated.
 This difficulty can be alleviated,  if we consider the  relaxed continuous  problem, 
\begin{equation} \label{MK1} 
\min_{X_{i, j} } \sum_{i=1} ^n \|y_i-z_j\|^2 X_{i, j}, 
\end{equation} 
 subject to the unit mass constraints  $\sum_{i=1} ^n X_{i, j} =1=\sum_{j=1} ^n X_{i, j}$ and $  X_{i, j} \ge 0.$ The problem
   is known as the $L^2$ Monge-Kantorovich mass transport problem. 
      The relaxed problem described by   Eq.~(\ref{MK1}) is 
    a convex (in fact,  linear) minimization  problem,  which  has   an optimal  permutation matrix 
(the existence of this is guaranteed by Birkhoff's theorem) 
  and     can be solved  by interior point methods\cite{Boyd}  or 
 primal-dual algorithms~\cite{RefWorks:Kaijser}  (see  chapter 4 in~\cite{Assignment2009}). 
 
 In the context of { (\ref{tau})},    the  permutation $\tau$ corresponding to the  permutation 
$X$ is optimal,  if and only if  $\{( z_{\tau(i)}, y_i)\}_{i=1}^n$ is  cyclically monotone.  
Consider   a transform $T:\IR^d\to\IR^d$ between two point sets $\{z_i\}_{i=1}^n,  \{y_i\}_{i=1}^n$ in $\mathbb{R}^d$ with $y_i=T(z_i)$.
  \emph{When  a (unknown) transform  between these  point sets is the gradient of some convex function,  then the correspondence can be recovered correctly by solving mass transport problems.} The set of  transforms  includes  scalings,  translations,  and other curl-free maps.
 Point  correspondence  can be reconstructed  correctly 
from optimizing   transport objectives, 
 if the  transform $T$ between point-sets is   the gradient of some convex function. 
In general, for    a  point-set with finite cardinality  $n$ sampled from $\Omega\subset \IR^3$,  
   when the  curl  of the transform 
  is sufficiently small, 
    then
   the underlying correspondence 
coincides with  a minimizer 
$\{X_{i,j} \} _{i,j=1} ^n$  of Eq.~(\ref{MK1}).
Empirical studies show the outstanding performance of optimal transport in recovering the point-to-point correspondence under a small curl deformation~\cite{Chen2013}.

\subsection{Discrete optimal  transport}
To solve (\ref{MK1}), introduce a vector $c\in \IR^{n^2}$ and its associated (reshaped)  matrix $\IT(c)$ with $\IT(c)_{i,j}=\|y_i-z_j\|^2$.
We can express (\ref{MK1}) as  the primal problem (transportation):  searching for the optimal solution $x\in \IR^{n^2}$ in 
\beqq\label{P0}
\min_ {\IT(x)\in \Pi_n } \langle c,  x\rangle, 
\eeqq
where  $\Pi_n$ is the set 
 \beqq\label{M}
\{\IT(x):  M x:=[\IT(x) \1_n;\IT(x)^\top \1_n]=\1_{2n}, x\ge 0\}.
 \eeqq 
 The matrix $X=\IT(x)$ represents a coupling  matrix $X=[X_{i,j}\ge 0: i,j=1,\ldots n]$, whose entry $X_{i,j}$ describes the amount of mass flowing from bin $i$ toward bin $j$. 
  The problem in  (\ref{P0}) is also  known as the assignment problem with assignment matrix $\IT(c)$. For each  feasible  solution $x$,  at most  $n$ entries can reach  the value $1$, i.e.,   $\IT(x)$ is a permutation matrix. By Birkhorff theorem, the extreme points of the set of doubly stochastic matrices are the permutation matrices.

The action of the adjoint operator  $M^\top$ on a vector $\nu=[\nu^{(1)}; \nu^{(2)} ]\in \IR^{2n}$
 is given by 
\beqq\label{Mt}
M^\top \nu=\IT^{-1}(\nu^{(1)} \1_n^\top+\1_n {\nu^{(2)}}^\top).
\eeqq
Its dual problem to (\ref{P0}) is the maximization problem with respect to a dual variable $\nu\in \IR^{2n}$,
\beqq\label{Pd}
\max_{\nu}\{ \1_{2n}^\top \nu: M^\top \nu\le c\}.
\eeqq
The optimal condition of the primal and dual problem is characterized by 
the Karush-Kuhn-Tucker(KKT) conditions, i.e.,  the nonnegativeness of a slack vector in (\ref{Pd}),
\beqq\label{KKT}
s:=c-M^\top \nu\ge 0
\eeqq
holds and 
   $\IT(s)_{i,j}>0$ occurs only for those indices   $(i,j)$ with  $X_{i,j}=0$.
The slackness condition actually implies zero duality gap,
\beqq\label{duality}
\langle c, x\rangle-\langle \nu, \1_{2n}\rangle=
\langle c, x\rangle-\langle M^\top\nu, x\rangle=\langle s, x\rangle= 0.
\eeqq

\subsection{Interior point methods}
Here we quickly illustrate the application of  interior point methods to   (\ref{P0}). More details can be found in textbooks\cite{Boyd} and \cite{Luenberger2016}. 
We start with  log-barrier functions for a basic conceptual introduction of interior point methods, which motivates the
 negative entropy  barrier functions   in our interior point methods.

  
To reach one optimal solution of  (\ref{P0}), 
path-following methods~\cite{Fiacco1968} solve the associated logarithmic barrier function  with larger and larger values of $t\in \{t_j: 0<t_0<t_1<t_2<\ldots\}$, 
\beqq\label{IP_MB}
\min_ {\IT(x)\in \Pi_n } \{  c^\top  x-t^{-1}\langle  \1_{n^2},  \log x\rangle\}. 
\eeqq
For each $t=t_j>0$, let  $x=x^{(t)}$ be
the critical point of
 the Lagrangian function, 
\beqq\label{eq_37}
\min_x \{  f(x,\nu):=c^\top x-t^{-1}\langle \1_{n^2}, \log x\rangle-\nu^\top  (Mx-\1_{2n})\}.
\eeqq
We compute the central point $x^{(t_j)}$ starting from the previously computed central point $x^{(t_{j-1})}$. 
The following  proposition shows   the KKT condition of (\ref{IP_MB}).  The proof can be given by  the direct calculus.  
\begin{prop}\label{prop3.1}Consider (\ref{IP_MB}) with $t>0$.
Introducing a multiplier vector $\nu$ for the constraint $Mx=\1_{2n}$,
 we have 
the Lagrangian function
\beqq
c^\top x-t^{-1}\langle  \1_{n^2},  \log x\rangle-\nu^\top  (Mx-\textcolor{black}{\1_{2n}}).
\eeqq 
The optimal condition of $x$ is 
\beqq\label{KKTt}
 c\odot x- \textcolor{black}{ t^{-1}  \1_{n^2}}=\diag(x) M^\top \nu, \; i.e.,  t x=(c-M^\top \nu)^{-1},
\eeqq
\textcolor{black}{
where  thanks to the constraint $Mx=\1_{2n}$,  $\nu$ is a root of  the nonlinear equation,
\beqq
M(c-M^\top \nu)^{-1}- t\1_{2n}=0, \textrm{ subject to } M^\top \nu<c.
\eeqq }
\end{prop}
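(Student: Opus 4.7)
The plan is to verify the stated optimality condition by direct differentiation of the Lagrangian, then substitute back into the primal constraint. Since the barrier function $-t^{-1}\langle\1_{n^2},\log x\rangle$ enforces strict positivity $x>0$ implicitly, I will treat $x$ as an interior variable and only differentiate in the unconstrained sense, confirming at the end that the resulting $x$ is indeed positive whenever $M^\top \nu < c$.

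First, I compute the partial derivative of the Lagrangian with respect to $x$. The term $c^\top x$ contributes $c$, the barrier $-t^{-1}\langle \1_{n^2},\log x\rangle$ contributes $-t^{-1} x^{-1}$ (understood entrywise using the notation of Section~\ref{notation}), and the linear constraint term $-\nu^\top(Mx-\1_{2n})$ contributes $-M^\top \nu$. Setting the gradient equal to zero yields $c - t^{-1} x^{-1} - M^\top \nu = 0$, i.e., $t^{-1}x^{-1} = c - M^\top\nu$. Taking the entrywise reciprocal gives $tx = (c-M^\top\nu)^{-1}$, which is well defined precisely under the stated interior condition $M^\top\nu<c$, and this positivity is also what makes the original barrier objective finite. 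Multiplying the identity $c - t^{-1}x^{-1} = M^\top \nu$ entrywise by $x$ recovers $c\odot x - t^{-1}\1_{n^2} = x\odot(M^\top\nu) = \diag(x) M^\top\nu$, which is the first displayed optimality condition.

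Second, I impose the primal constraint $Mx=\1_{2n}$. Substituting $x = t^{-1}(c-M^\top\nu)^{-1}$ into this constraint yields $t^{-1} M (c-M^\top\nu)^{-1} = \1_{2n}$, which rearranges to $M(c-M^\top\nu)^{-1} - t\1_{2n} = 0$. This is exactly the claimed nonlinear equation for $\nu$, with the feasibility qualification $M^\top \nu < c$ inherited from the step above.

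There is no real obstacle here; the proof is purely computational, and the only point deserving care is the interpretation of $x^{-1}$ and $\odot$ via the componentwise conventions fixed in Section~\ref{notation}, plus noting that the strict inequality $M^\top\nu<c$ is forced by interiority of $x$ in the barrier formulation, so no additional hypothesis is needed.
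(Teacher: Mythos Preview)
Your proof is correct and matches the paper's approach: the paper simply states that ``the proof can be given by the direct calculus,'' and your computation is exactly that direct calculus, carried out carefully with the entrywise conventions of Section~\ref{notation}.
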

The condition in (\ref{KKTt}) states that   $c\odot x-\textcolor{black}{ t^{-1} \1_{n^2}}$ lies in the range of $\diag(x) M$ for the optimal interior point $x>0$ in $\Pi_n$. 
Taking the product (\ref{KKTt}) with $x$ yields the duality gap $t^{-1}n^2$ associated with  finite $t$, i.e., 
\beqq\label{eq_14}
c^\top  x-\1_{2n}^\top \nu=t^{-1}n^2\ge 0,
\eeqq
which provides a measure of  closeness to optimality. The optimal solution of (\ref{P0}) can be obtained from  a limit of $x^{(t)}$ as $t\to \infty$.

 \subsubsection{  Matrix-free conjugate gradient methods for  central path}\label{sec_Newton}

We illustrate the matrix-free computation of $x^{(t)}$. The argument is standard,  for instance, see~\cite{Luenberger2016}.  
We start with one initial point $x^{(t_0)}$ in $\Pi_n$.
To approximate the critical point $x^{(t)}$ in (\ref{eq_37}), we 
generate  a minimizing sequence $\{(x_k,\nu_k): k=1,2,3,\ldots\}$ of (\ref{eq_37}) with step size $\alpha>0$,
 \beqq \label{eq_20}x_{k+1}=x_k+\alpha d_k\in \IT^{-1}(\Pi_n),\;  \nu_{k+1}=\nu_k+\alpha y_k,\eeqq
where  $z_k:=(d_k,y_k)$ satisfies the linearization of (\ref{eq_37})
\beqq\label{eq27}
\nabla f(x_k+d_k ,\nu_k+y_k)\approx 
\nabla f(x_k,\nu_k)+\langle \nabla f^2(x_k ,\nu_k), z_k\rangle=0.
\eeqq
Introduce the residual vector,   \beqq r_k=-(c-(x_k t)^{-1}-M^\top \nu_k).\eeqq Together with   $Mx_k=\1_{2n}$, 
 (\ref{eq27}) gives
 \beqq \label{eq36'}
\nabla^2 f(x_k,\nu_k)  z_k=
\left(\begin{array}{cc}
t^{-1}\diag(x_k)^{-2}, & -M^\top \\
-M,  & 0
\end{array}\right)
\left(\begin{array}{c}
d_k\\
y_k\end{array}\right)=-\nabla f(x_k,\nu_k)=
\left(\begin{array}{c}
r_k\\
0\end{array}\right).
\eeqq



The first part of (\ref{eq36'}) implies
 \beqq\label{eq_24}
 t^{-1}  d_k=\diag(x_k^2) (M^\top y_k+r_k).
 \eeqq
 Together with the second part  of (\ref{eq36'}), we have the normal equation for $y_k$,
 \beqq\label{normal}
 M \diag(x_k^2) (  M^\top y_k+r_k)=0,\; 
 i.e., 
y_k=-(M \diag(x_k^2) M^\top)^\dagger (M \diag(x_k^2) r_k).
 \eeqq
The well-poshness of (\ref{normal}) is  given in the appendix. 
 We can employ   Krylov subspace methods, e.g.,   matrix-free conjugate gradient methods to  solve $y_k$ from (\ref{normal}) and  then compute $d_k$ from (\ref{eq_24}).

In solving (\ref{eq36'}),  we shall avoid forming those big matrices $M$ and $\diag(x_k^{-2})$.
We demonstrate the matrix-vector product in the conjugate gradient method in solving $y_k$.
With  $y_k :=[{y^{(1)}}; {y^{(2)}}]$ and
\beqq\label{Sk}
\widetilde M_k :=
M \diag(x^2) M^\top=
\left(\begin{array}{cc}
\diag(\IT( x_k^2)\1_n), &\IT( x_k^2) \\
\IT(x_k^2)^\top,  & \diag(\IT( x_k^2)^\top \1_n)
\end{array}\right),
\eeqq
 we  implement the matrix-vector product in the conjugate gradient method,
\beqq
\widetilde M_k y_k
=
\left(\begin{array}{c}
y^{(1)}\odot (\IT(x_k^2) \1_n)+\IT(x_k^2) y^{(2)}\\
y^{(2)}\odot (\IT(x_k^2)^\top  \1_n)+\IT(x_k^2)^\top  y^{(1)} 
\end{array}\right).\label{eq_38}
\eeqq
To further enhance the convergence speed, 
we can adopt some preconditioners for the conjugate gradient method, e.g., modified Cholesky preconditioners\cite{RN3}. 
%

\begin{rem}[Rank reduction]\label{2.2}

Note that the matrix $\widetilde M_k :=M \diag(x_k^2) M^\top$ can be regarded as the \textbf{Schur complement} of the first block in the Hessian matrix in (\ref{eq36'}), after ignoring the scaling factor $t$. (This matrix   also appears in    the Hessian computation  in (\ref{eq_80}) and (\ref{eq_14''}) for  matrix balancing algorithms. )
Each $x^{(t)}$ is computed based on the Newton direction $d_k$, whose calculation is essentially   the application of a projection $P_k$.
 The calculation could be inaccurate, if the involved \textbf{Schur complement} $M\diag(x_k^2) M^\top\in \IR^{2n\times 2n}$ has serious rank deficiency due to the limitation of finite precision.
Since the null space of $M^\top$ has dimension $1$, the rank of $M\diag(x_k^2) M^\top$ is $2n-1$ for $x_k$ with all entries away from $0$ (See the appendix). When  the optimal solution $\IT(x)$ of (\ref{P0}) is a  permutation matrix,  $M\diag(x^2) M^\top$, which is the sum of $n$ rank one matrices,  has rank only $n$. Hence, as $x^{(t)}$ tends to $x$,   many  entries in $(x^{(t)})^2$ (though nonzero) will be rounded to zero in the matrix-vector-product calculation. The inaccuracy is always inevitable for $t$ sufficiently large.
To reduce numerical errors caused by the singularity, the matrix $\widetilde M_k$  should be replaced with a regularized  matrix \beqq
\widetilde M_k+\epsilon I_{2n\times 2n}
\textrm{ for some positive small $\epsilon$}.\label{M_eps}
\eeqq 
In addition, to ensure the feasibility of $x_k$, we can apply matrix balancing to project $x_k$ on $\Pi_n$.
 Another manner to alleviate the rank deficiency is that   we can employ 
 some early termination condition stated  in Prop.~\ref{early}
  to produce  an optimal solution fulfilling the KKT condition, if it is applicable. 
 
\end{rem}

 The aforementioned  log-barrier 
  interior point method
   only serves for the purpose of illustrating the overall  algorithmic  framework, and  motivating the negative-entropy based interior point methods.
Computational experiments show that primal-dual methods  can perform much better than this pure primal barrier methods on practical problems.  For instance, Mehrotra predictor-corrector method\cite{doi:10.1137/0802028} is one 
popular primal-dual method,  whose iterates follow  a path  with duality measure tending to $0$ to reach one  point fulfilling the KKT condition  in the space of  $x$,  $\nu$ and $s$\cite{Wright1997}. 

\subsection{Optimal transport  by matrix balancing}\label{SNNE_sec}
Recently,   optimal transport has been approximated  by 
 an  entropic regularized optimal transport problem~\cite{Cuturi2013}\cite{Chizat2018}.
 Using the negative entropy function $x\log x-x$,  we obtain
a regularized problem with $t>0$,
\beqq\label{eq_76'}
\min_{Mx=\1_{2n}} \left\{ \IF_t(x):=\langle c,x\rangle+t^{-1} \langle \1_{n^2}, x\odot \log x-x\rangle\right\}.
\eeqq
The strict convexity of $x\odot \log x$ implies the uniqueness of  the minimizer in $\IR_+^{n^2}$. 
The first-order optimal  condition suggests that  the optimal solution can be computed by 
matrix scaling algorithms. 
Introducing   a multiplier vector $\nu$ for the constraint,  we have
 the { problem } 
\beqq\label{eq_29'}
\min_x  \left\{\langle c,x\rangle+t^{-1} \langle \1_{n^2}, x\odot \log x-x\rangle-\langle \nu, Mx-\1_{2n}\rangle\right\}.
\eeqq
The gradient computation gives the optimal condition of  $x$,
\beqq\label{eq_69}
c+t^{-1}\log x-M^\top \nu=0, \textrm{ i.e.,
$
x=\exp(-t(c-M^\top \nu))$.}
\eeqq
The multiplier vector $\nu:=[\nu^{(1)}; \nu^{(2)}]$ in (\ref{eq_29'})   can be  determined  in matrix balancing of $\exp(-t \IT(c))$. Indeed, 
since $\IT(M^\top \nu)=\nu^{(1)} \1_n^\top +\1_n {\nu^{(2)}}^\top $, then (\ref{eq_69}) yields that $\IT(x)\in \Pi_n$ is obtained under
   proper scaling matrices,
   \beqq\label{xlogx}
\IT(x)=\IT(\exp(-t(c-M^\top \nu)))=\diag(\exp(t\nu^{(1)} ))\exp(-t \IT(c))\diag(\exp(t\nu^{(2)} ))\in \Pi_n.
\eeqq
Various Newton methods  can be employed to perform matrix balancing in (\ref{xlogx}).
 The details of   matrix balancing algorithms will be presented  in next section.

 Under large  $t$,
  the solution  in (\ref{eq_76'})
can provide a better  approximation
 to the original optimal transport in (\ref{P0}). 
 However,  
  problems with large $t$ are generally very ill-conditioned and   hard to solve. 
 To alleviate the ill-condition issue, 
 with $\eta>1$,  we  solve $\nu$ in a sequence of subproblems associated with   $t=t^{(0)}, t^{(1)},\ldots, t_{\max}$. \textcolor{black}{This method is known as   $\epsilon$-scaling heuristic\cite{Schmitzer2019} with   $1/t$ replaced with $\epsilon\to 0$. } 
  To emphasize the usage of Newton methods, we  
 call the interior point method in solving (\ref{eq_76'}) with $t\to \infty$ as  the \textbf{Sinkhorn-Newton-negative-entropy method(SNNE)}.
\begin{itemize}
\item Initialize $t=t_0$ and $\nu=\nu_{ini}$.  Repeat the following two steps until $t=t_{\max}$. 
\item Employ Newton based matrix balancing algorithms to update  $\nu$, i.e.,   $\exp(-t \IT(c-M\nu))$ is doubly stochastic.\item If $t<t_{\max}$, update $t\to t\eta$.
\end{itemize}
 The convergence of SNNE consists of two parts:  the duality gap and the slackness condition.  
 The convergence of duality gap requires   the boundedness of $\nu$, which is related to  the total support condition.  We postpone  the discussion to Theorem~\ref{thm1}. 
 Here,  we give a few words on the convergence of $s\odot x\to 0$ as $t\to \infty$.
With $s=c-M^\top \nu$, the optimal condition in (\ref{eq_69}) can be expressed as   $s=-t^{-1}\log x\ge 0$.
Fixing $\gamma'\in (0,1)$ and $\gamma''\in (1,\infty)$,
we can compute  an approximate   solution $x$ with 
\beqq
\gamma' t^{-1} (-x\odot \log x)\le s\odot x\le \gamma'' t^{-1} (-x\odot \log x)
\eeqq 
 As $t\to \infty$, we reach the KKT condition in (\ref{KKT}), \beqq
0\le x\odot s=-t^{-1} x\odot \log x\le (et)^{-1}\to 0.
\eeqq
 As $t$ gets sufficiently large, a solution satisfying  the slackness condition can  be reached  with the aid of  early termination in Prop.~\ref{early}.
Empirically,  the  convergence for large $t$ does require fast convergence and high accuracy of  matrix balancing algorithms. 

\subsubsection{ Interior point methods with total support constraints} Although an optimal solution $x$ could be  sparse,
interior point methods  require   memory storage $O(n^2)$ for $x$, which could be prohibited  
in  large-scale point-sets.  As  column generation solves large linear programming, we shall use dual variables to  reduce the memory storage by   imposing 
 (and dynamically  updating)  the \textbf{sparse support constraint} on $x$. 
  For instance,   in \cite{Schmitzer2019}  sparse support sets are introduced 
   to form  approximate problems with truncated  sparse kernels to reduce the memory storage requirement. 
 Actually,  introducing these constraints to remove those inactive components can  also  improve the  quality of solutions $x^{(t)}$.

Let $supp(x)$ be the index set of all the positive entries in $x$. 
We say that  the index set $\Sigma$ is one \textit{support} of $X=\IT(x)$, if $\Sigma$ consists of all indices of nonzero entries in $X$, i.e., $X_{i,j}=0$ holds for all $(i,j)\notin \Sigma$.    
 We say that \textit{$X$ is a solution to optimal transport with respect to  the support constraint $\Sigma$, 
 } if $\Sigma$ is a support  of 
 $X=\IT(x)$
and $x$
 is one optimal solution to   \beqq\label{F20}
 \min_{Mx=\1_{2n}} \left\{ \IF_t(x,\Sigma):=\langle c,x\rangle+t^{-1} \langle \1_{n^2}, x\odot \log x-x\rangle\right\}, \; 
  supp(\IT(x))\subset\Sigma
\}.\eeqq


 To reach one optimal transport  approximation,  we    shall  generate a sequence of \textrm{ supports }\beqq 
 \{\Sigma_1,\ldots, \Sigma_\xi, \ldots\}, \eeqq  and apply matrix balancing algorithms to get 
an   approximate solution   $X_{\xi}\in \Pi_n$  with respect to the  support $\Sigma_{\xi}$ for each $\xi\in \{1,2,\ldots\}$.
By updating $x$ and $\Sigma$ alternately, we can reach a good approximation of the optimal solution for $\IF_t(x)$ in (\ref{eq_76'}).
if  the selection rule of $\Sigma_{\xi+1}$ is given by  (\ref{sigma1}) to fulfill  two conditions:
 the  total support condition (see Definition ~\ref{def1}) and the inclusion of  the index set
\beqq
\label{Sigma_ep}
\Sigma'':=\{(i,j): s_{i,j} :=c_{i,j}-(\nu^{(1)}(i)+\nu^{(2)}(j))\le \epsilon\}\subset \Sigma_{\xi+1}.
\eeqq
Here, $\epsilon$  is
  some positive parameter 
to ensure the sparsity of the support.
 
\subsubsection{Total support condition}
 
\begin{definition}\label{def1}Let $X$ be  an $n\times n$ matrix and $\sigma$ be a permutation of $\{1,2,\ldots, n\}$. Then the sequence $\{X_{1,\sigma(1)},X_{2,\sigma(2)}, \ldots, X_{n,\sigma(n)}\}$ is a diagonal of $X$ (corresponding to $\sigma$). Then  a nonnegative square matrix $X$ is said to have support if $X$ contains one positive diagonal. Also, $X$ has  total support if $X\neq 0$ and if every positive entry of $X$ lies on a positive diagonal   \cite{Knopp1967}.  Let $\1_\Sigma$ denote the indicator matrix, whose $(i,j)$-entry is $1$ for each $(i,j)\in \Sigma$. We say that an index set $\Sigma$ satisfies total support condition, if the associated indicator matrix $\1_\Sigma$ has total support. 
 \end{definition}  
When $\1_\Sigma$ has no support, then  $\1_\Sigma$ can not \textcolor{black}{be}  scaled to a doubly stochastic matrix. Actually,   by Birkhorff theorem, 
any  doubly stochastic matrix is convex combination of  permutation matrices.  Since the support of one nonnegative matrix remains invariant under the product of positive diagonal matrices,   having total support is one necessary condition for matrix balancing.  Indeed, 
Theorem~\ref{them_xi} states that  total support is the crucial condition to ensure  the existence of a doubly stochastic matrix from  a sparse nonnegative matrix $X$. 
 \begin{theorem} \label{them_xi}\cite{Knopp1967} Let $X$ be a nonnegative squared matrix. A necessary and sufficient condition that $B=\diag(y) X \diag(z)$ is double stochastic for two positive vectors $y,z$ is that $X$ has total support. 
 \end{theorem}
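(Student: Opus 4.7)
The plan is to prove necessity by a direct appeal to Birkhoff's theorem and sufficiency by a strictly convex variational argument whose KKT conditions encode the desired diagonal scaling.

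For necessity, assume $B = \diag(y)\,X\,\diag(z)$ is doubly stochastic with $y,z>0$. Positive diagonal scaling preserves the zero pattern, so $\supp(B)=\supp(X)$, and it suffices to exhibit a positive diagonal of $B$ through each positive entry. By Birkhoff's theorem, $B=\sum_k \lambda_k P_{\sigma_k}$ with $\lambda_k>0$. If $B_{i,j}>0$, then $\sigma_k(i)=j$ for at least one $k$, and along that permutation every entry $B_{\ell,\sigma_k(\ell)}\ge \lambda_k>0$, giving a positive diagonal of $B$ through $(i,j)$. Transferring back to $X$ proves total support.

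For sufficiency, consider the strictly convex program
\[
\min_{B\in\Pi_n,\ \supp(B)\subset \supp(X)} \ \sum_{(i,j)\in \supp(X)} B_{i,j}\log\frac{B_{i,j}}{X_{i,j}}.
\]
Total support makes the feasible set nonempty: averaging the permutation matrices $P_\sigma$ corresponding to the positive diagonals witnessing total support yields a doubly stochastic matrix supported in $\supp(X)$. The objective is strictly convex and continuous on this compact polytope, so there is a unique minimizer $B^\ast$. Introducing multipliers $u,v\in\IR^n$ for the row-sum and column-sum constraints, stationarity on $\supp(B^\ast)$ gives
\[
B^\ast_{i,j} = X_{i,j}\,\exp(u_i+v_j-1), \quad (i,j)\in \supp(B^\ast),
\]
and setting $y_i:=\exp(u_i-\tfrac12)$ and $z_j:=\exp(v_j-\tfrac12)$ recovers $B^\ast=\diag(y)\,X\,\diag(z)$ on $\supp(B^\ast)$.

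The delicate step, and the one where total support is indispensable, is to upgrade $\supp(B^\ast)\subset \supp(X)$ to equality, from which positivity of $y,z$ follows. Suppose for contradiction that $B^\ast_{i_0,j_0}=0$ for some $(i_0,j_0)\in\supp(X)$. By total support, there is a permutation $\sigma$ with $\sigma(i_0)=j_0$ and $X_{i,\sigma(i)}>0$ for every $i$, so $P_\sigma$ is a feasible point of the program. Along the feasible direction $P_\sigma - B^\ast$, the directional derivative of the KL-type objective at $B^\ast$ contains the component $\tfrac{d}{d\varepsilon}\big|_{\varepsilon=0^+}\varepsilon\log(\varepsilon/X_{i_0,j_0})=-\infty$ from the coordinate $(i_0,j_0)$, contradicting the optimality of $B^\ast$. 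Hence $\supp(B^\ast)=\supp(X)$, the scaling identity holds throughout $\supp(X)$, and positivity of $y,z$ propagates from any basepoint through the bipartite graph of $\supp(X)$: each row and column contains a positive entry by total support, and within each connected component the factors are determined up to a reciprocal scalar, all strictly positive.
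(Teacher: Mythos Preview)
The paper does not supply its own proof of this statement; it is quoted as a classical result from Sinkhorn and Knopp~\cite{Knopp1967}, so there is no in-paper argument to compare against. Your proof is correct and self-contained. The necessity direction via Birkhoff's theorem is the standard one. For sufficiency, your KL-minimization program is well posed (total support guarantees at least one feasible permutation matrix), and the directional-derivative argument forcing $\supp(B^\ast)=\supp(X)$ is sound: every term in the one-sided derivative along $P_\sigma-B^\ast$ is either finite or contributes an additional $-\infty$, so the total is $-\infty$ and optimality of $B^\ast$ is contradicted. One small redundancy: once Lagrange multipliers $u,v\in\IR^n$ exist (affine constraints, convex objective), the scalings $y_i=\exp(u_i-\tfrac12)$ and $z_j=\exp(v_j-\tfrac12)$ are automatically positive, so the closing remark about propagation through the bipartite graph is not needed.

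It is worth noting that your variational approach is exactly the primal counterpart of the negative-entropy framework the paper itself develops in Section~\ref{NE_sec}: the dual problem $\min_\nu \mbf(\nu)$ in~(\ref{eq_58}) is the Lagrangian dual of your KL minimization, and your multipliers $(u,v)$ play the role of the paper's $\nu=[\nu^{(1)};\nu^{(2)}]$. The original 1967 proof of Sinkhorn and Knopp proceeds instead by analyzing limits of the alternating row/column normalization under the total-support hypothesis; your entropy argument is the cleaner modern route and meshes naturally with the paper's viewpoint.
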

To illustrate the importance of total support, consider the following example. Let  $X=\left(\begin{array}{ccc}1& 0& 0\\ 2& 3& 0\\0& 0 &4 \end{array}\right)$. 
 Since the entry $2$  is not contained in a positive diagonal,
 $X$  cannot be scaled to a doubly stochastic matrix. However, when    $X=\left(\begin{array}{ccc}1& .05& 0\\ 2& 3& 0\\0& 0 &4 \end{array}\right)$,  the entry $2$ is contained in the positive diagonal $[0.05, 2,4]$ and thus
 the matrix $X$ can be balanced. On the other hand, let $X=\left(\begin{array}{cc}1& \epsilon\\ 1& 1\end{array}\right)$ with $\epsilon>0$. Even though $X$  
   can be scaled to a doubly stochastic matrix, 
   \beqq\label{eq_41n}
\diag([1, t] )   \left(\begin{array}{cc}1& \epsilon\\ 1& 1\end{array}\right)\diag((1+t)^{-1}[1,t^{-1}])=\frac{1}{1+t}
   \left(\begin{array}{cc} 1& \epsilon t^{-1} \\ t& 1\end{array}\right)\; \textrm{ with  $t=\epsilon^{1/2}$}, 
   \eeqq
  the relative magnitude of  entries of the scaling vectors tend to $\infty$ as $\epsilon\to 0$.

We illustrate the construction of a set with total support. Let $\Sigma''$ denote the index set,
\beqq
\label{Sigma_ep''}
\Sigma'':=\{(i,j): c_{i,j}-(\nu^{(1)}(i)+\nu^{(2)}(j))\le \epsilon\}.
\eeqq
In general, the set $\Sigma''$ does not automatically meet the total support condition. 
Here is  one simple  construction of a total support set $\Sigma$ containing  the prescribed index set $\Sigma''$.  
 \begin{prop}\label{total}
 Let $\Sigma''$ be some prescribed  index set. Let 
 $\sigma$ be a permutation of $\{1,2,\ldots, n\}$ and 
 let $\Sigma':=\{(i,\sigma(i)): i=1,\ldots, n\}$. Then 
 the union set
 \beqq
 \Sigma:=\Sigma' \cup
 \Sigma''\cup \Sigma''',\; \Sigma''':= \{(\sigma^{-1}(j), \sigma(i)): (i,j)\in \Sigma''\}\label{sigma1}
 \eeqq
  has total support. 
 \end{prop}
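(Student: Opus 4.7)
The plan is to verify the definition of total support directly: for every $(a,b)\in \Sigma$, I will exhibit a permutation $\pi$ of $\{1,\ldots,n\}$ with $\pi(a)=b$ such that $(i,\pi(i))\in \Sigma$ for all $i$. This is equivalent to saying that the entry $(a,b)$ of $\1_\Sigma$ lies on a positive diagonal of $\1_\Sigma$, which is precisely the total support condition by Definition~\ref{def1}. The reference permutation $\sigma$ supplies a ``base'' diagonal $\{(i,\sigma(i))\}$ that is entirely contained in $\Sigma'\subset\Sigma$, so the case $(a,b)\in \Sigma'$ is handled trivially by taking $\pi=\sigma$.

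The key construction for the remaining cases is a single transposition that modifies $\sigma$. Given $(a,b)\in \Sigma$, define
\[
\pi(a):=b,\qquad \pi(\sigma^{-1}(b)):=\sigma(a),\qquad \pi(i):=\sigma(i)\ \text{for all other }i.
\]
This is a permutation since it merely swaps the $\sigma$-images of $a$ and $\sigma^{-1}(b)$. If $(a,b)\in \Sigma''$, then by construction $(a,b)\in \Sigma$, and the ``return edge'' $(\sigma^{-1}(b),\sigma(a))$ lies in $\Sigma'''$ by applying the defining formula $\Sigma'''=\{(\sigma^{-1}(j),\sigma(i)):(i,j)\in \Sigma''\}$ to $(i,j)=(a,b)$; the remaining edges $(i,\sigma(i))$ for $i\notin\{a,\sigma^{-1}(b)\}$ are in $\Sigma'$. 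Hence every $(i,\pi(i))\in \Sigma$. The case $(a,b)\in \Sigma'''$ is structurally symmetric: writing $(a,b)=(\sigma^{-1}(j),\sigma(i))$ for some $(i,j)\in \Sigma''$, we have $\sigma(a)=j$ and $\sigma^{-1}(b)=i$, so the same swap produces exactly the two non-$\sigma$ edges $(a,b)\in \Sigma'''$ and $(i,j)\in \Sigma''$, both inside $\Sigma$.

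There is no real obstacle; the construction is explicit. The only point that deserves attention is noticing that the definition of $\Sigma'''$ has been designed precisely so that the mandatory ``return edge'' produced by any swap through $\Sigma''$ lands automatically in $\Sigma$, and symmetrically for $\Sigma'''$. Degenerate cases are benign: if $a=\sigma^{-1}(b)$ then $b=\sigma(a)$ and $(a,b)\in \Sigma'$, so the swap collapses to the identity and the $\Sigma'$ case applies; no element of $\Sigma$ is missed.
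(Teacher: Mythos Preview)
Your proof is correct and follows essentially the same approach as the paper: both construct, for a given index $(a,b)$, the permutation obtained from $\sigma$ by swapping the images of $a$ and $\sigma^{-1}(b)$, and then verify that the two ``off-diagonal'' edges $(a,b)$ and $(\sigma^{-1}(b),\sigma(a))$ lie in $\Sigma''\cup\Sigma'''$ while the remaining $n-2$ edges stay in $\Sigma'$. Your write-up is in fact more complete than the paper's, which only spells out the case $(a,b)\in\Sigma''$ and leaves the $\Sigma'$ and $\Sigma'''$ cases (and the degenerate situation $a=\sigma^{-1}(b)$) implicit.
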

 \begin{proof}
 For each  $(i,j)\in \Sigma''$, we shall point out one diagonal in $\Sigma$. 
 Since $\sigma$ is a permutation, then $\{(k, \sigma(k)): k=1,2,3,\ldots, n\}$ is one diagonal. Express the diagonal sequence as
$
\{ (i, \sigma(i)), (\sigma^{-1}(j),j), \widehat\Sigma\}$, i.e., $\widehat \Sigma$ is  the set consisting the remaining $n-2$ indices. 
Note that $\widehat \Sigma$ does not consist of any entries in row-$i$, row-$\sigma^{-1}(j)$, column-$j$ and column-$\sigma(i)$.
Then $\{(i,j), (\sigma^{-1}(j), \sigma(i)), \widehat \Sigma\}$ is
 a diagonal
 for this $(i,j)$. 
 \end{proof}

 \begin{rem}[The choice of $\sigma$]\label{rem2.6} 
 The set $\Sigma'''$ can be regarded as one ``reflection" of $\Sigma''$ with respect to the diagonal $\Sigma'$.
 For simplicity,  one can  consider the fixed choice: let $\sigma$ to be the identity and 
  $\Sigma$ in (\ref{sigma1}) is the index set corresponding to the positive entries of $I+\1_{\Sigma''}+\1_{\Sigma''}^\top$.
  Empirically,  we suggest that the permutation $\sigma$  should be chosen dynamically,  so that  the corresponding entries $\{ X_{i, \sigma(i)}: i=1,\ldots, n\}$ are  large entries in $X$, away from zero. 
 
 \end{rem}

 \subsubsection{Index set $\Sigma''$}
The inclusion  of $\Sigma''$ is to provide one tight approximation to $\IF_t(x)$ in (\ref{eq_76'}).
Substitute  the optimal vector   $x$ in (\ref{eq_69}) to (\ref{eq_29'}).  The Lagrange dual of (\ref{eq_76'}) is given by 
\beqq\label{eq_dual}
\max_{\nu}
\left\{\IG_t(\nu):=- t^{-1} \langle \exp(t\nu^{(1)} ), \exp(-t \IT(c)) \exp(t\nu^{(2)} ) \rangle+\langle \nu, \1_{2n}\rangle\right\}.
\eeqq
Introduce   a sparse support set $\Sigma$  as the support of $x$ and solve $x$ from the problem
\beqq\label{eq_29Sig}
\min_{M x=\1_{2n}, x\ge 0} \left\{  \IF_t(x, \Sigma) :=\langle c,x\rangle+t^{-1} \langle \1_{\Sigma}, x\odot \log x-x\rangle\right\}.
\eeqq
Introduce a multiplier vector $\nu$ for the constraint and form the Lagrangian function,
\beqq
\langle c,x\rangle+t^{-1} \langle \1_{\Sigma}, x\odot \log x-x\rangle-\langle \nu, Mx-\1_{2n}\rangle.
\eeqq
The   optimal solution  is given by 
\beqq\label{eq_49'}
x=\IT^{-1}(\1_{\Sigma})\odot \exp(-t(c-M^\top \nu)),\;\textrm{ i.e., }\;
\IT(x)=\diag(\nu^{(1)})(\1_{\Sigma}\odot \exp(-t \IT(c))) \diag(\nu^{(2)}),
\eeqq where $\nu$ is chosen to ensure  $ \IT(x)\in\Pi_n$.
Using  (\ref{eq_49'}),   we have  the Lagrange dual of (\ref{eq_29Sig}),
\beqq\label{k_bound}
\max_\nu \left\{\IG_t(\nu,\Sigma):=- t^{-1} \langle \1_\Sigma, \exp(-t \IT(c- \textcolor{black}{M^\top \nu}))  \rangle+\langle \nu, \1_{2n}\rangle\right\}.\eeqq

Let $\Pi_0$ denote the whole index set $\{(i,j): 1\le i, j\le n\}$ and   let  $\Sigma^c$ denote the complement set  of $\Sigma$. According to duality,   \beqq
\max_\nu \IG_t(x,\Pi_0)=\min_{x\in \Pi_n} \IF_t(x)=\min_{x\in \Pi_n} \IF_t(x,\Pi_0)\le \min_{x\in \Pi_n} \IF_t(x,\Sigma)=\max_\nu \IG_t(\nu,\Sigma).\eeqq
Hence,  $\max_\nu \IG_t(\nu,\Sigma)$ is one upper estimate for $\min_x \IF_t(x)$ and the gap can be estimated by
\beqq
\max_\nu \IG_t(x,\Sigma)-\max_\nu \IG_t(x,\Pi_0)\le 
\max_\nu (\IG_t(x,\Sigma)- \IG_t(x,\Pi_0))\le 
\max_\nu\{ t^{-1} \langle \1_{\Sigma^c}, \exp(-t \IT(c- \textcolor{black}{M^\top \nu}))  \rangle\}.
\eeqq
 For a tight estimate  to $\min_{x\in \Pi_0} \IF_t(x)$,  the support set $\Sigma$ should be chosen to  include    the index set $\{ (i,j): (c-\textcolor{black}{M^\top \nu})_{i,j}<\epsilon\}$ for some constant  $\epsilon>0$.

In summary, we have  the following  SNNE-sparse algorithm.
 As pointed in Theorem~\ref{them_xi},  the support set must satisfy  total support condition  to ensure the existence of scaling vectors $\nu^{(1)}$ and $\nu^{(2)}$ for matrix balancing.


 \begin{alg}[SNNE with {sparse} support]\label{SNNEalg}
   Input:  parameters $\epsilon>0$, $\xi_{\max}>0$,  $t_{\max}>0$, $\eta>1$  and the assignment matrix $c$. 
   Initialize $t=t_0$ and $\nu$.
   Generate
one initial support set   $\Sigma_1$ fulfilling the total support condition.
Repeat the following steps for $t=t_0, t_1, \ldots, t_{\max}$, so that  $\nu_\xi$ gives  a solution for $x$ in (\ref{eq_49'}).

\begin{itemize}
\item  For 
$\xi=1,2,3,\ldots, \xi_{\max}$, iterate the following two steps to get approximation solutions for $\nu, \Sigma$.
\begin{enumerate}
\item Employ Newton method based matrix balancing algorithms  in section~\ref{NE_sec}  to update  $\nu$, i.e.,   \beqq\label{SNNEs} 
\1_\Sigma\odot \exp(-t \IT(c-M\nu))\eeqq is doubly stochastic.
\item Let  $\nu_\xi=[\nu^{(1)}; \nu^{(2)}]$ and construct $\Sigma''$ by (\ref{Sigma_ep''}).
Let  $\Sigma_{\xi+1}$ be the total support  set in (\ref{sigma1}). 
\end{enumerate}
\item If $t<t_{\max}$, update $t\to t\eta$.
\end{itemize}

   \end{alg} 

 \begin{rem}[Convergence] We give a few comments on  the convergence of SNNE-sparse.  Suppose we fix the cardinality $|\Sigma_\xi|$ for each $\xi$. The sequence of $(x_\xi, \Sigma_\xi)$ is  actually constructed  to minimize $\IF_t(x,\Sigma)$ alternately, where $x_\xi$ is given by (\ref{eq_49'}). Since the function $\IF_t(x,\Sigma)$ is bounded below, the sequence will eventually  stop at some $\xi$. Indeed,   the  optimality of  $x_\xi$ is ensured if $\IT(x_\xi)$ in (\ref{eq_49'}) is balanced by some $\nu_\xi$.   From (\ref{eq_29Sig}), the optimality of $\Sigma_\xi$ is ensured,  if $\Sigma_\xi$ contains  the index set associated with the smallest entries of $x\log x-x$, equivalently,  the smallest entries of $c-M^\top \nu$. (Thanks to  the monotonic decrease of $x\log x-x$ for $x\in [0,1]$, $\Sigma_\xi$ actually contains the index set   associated with the largest entries of $x$.) Here,  we ignore  the total support requirement on each $\Sigma_{\xi}$.
 
\end{rem}

\subsubsection{Error estimate of SNNE-sparse}

Error estimates of SNNE-sparse can be examined by duality measure $\langle c, x\rangle-\langle \nu, \1_{2n}\rangle$.
  The following result indicates how   the duality measure    under $t\to \infty$ can be improved  by
  the accuracy of  matrix balancing on $\IT(x)$ and
  the boundedness assumption  on $\nu$. 
  \begin{theorem}\label{thm1} Consider an approximate optimal solution $x$ of (\ref{F20}), constructed from matrix balancing 
 \beqq 
 x=\IT^{-1}(\1_{\Sigma})\odot \exp(-t(c-M^\top \nu))
 \eeqq for some dual vector $\nu$. Let $\cN$ be the null space  of $\diag(\IT^{-1}(\1_\Sigma)) M^\top$ and
 let $P$ be the projection with kernel $\cN$.
 Suppose that 
 $\|P \nu\|_2 \le \delta$ holds for some positive constant $\delta >0$ and 
  $\IT(x)$ is nearly doubly stochastic, i.e., $\| Mx-\1_{2n}\|_2\le \epsilon_{MB}$ for some $\epsilon_{MB}>0$. 
  Then we have error estimates,
  \beqq
|\langle c, x\rangle-\langle \1_{2n}, \nu\rangle  |\le \epsilon\delta + (et)^{-1}  |\Sigma|,
\eeqq 
where
$|\Sigma|$ is the cardinality of the index set $\Sigma$.

\end{theorem}
\begin{proof}
Since $\Sigma$ has total support, then $\1_\Sigma$ can be balanced by some scaling vectors $\zeta^{(1)}, \zeta^{(2)}\in \IR^n$, i.e.,  
$\1_\Sigma\odot (\zeta^{(1)} {\zeta^{(2)}}^\top )$ is doubly stochastic, 
$ M \IT^{-1}(\1_\Sigma\odot (\zeta^{(1)} {\zeta^{(2)}}^\top ))=\1_{2n}.$
Hence, 
\beqq
M (\IT^{-1}(\1_\Sigma)\odot x)-\1_{2n}=M (\IT^{-1}(\1_\Sigma)\odot (x-\IT^{-1} (\zeta^{(1)} {\zeta^{(2)}}^\top ) ) )
\eeqq
lies in the range of $M\diag(\IT^{-1}(\1_\Sigma))$,  and also lies in the range of $P$,  which implies
 $P(Mx-\1_{2n})=Mx-\1_{2n}$ from the definition of $P$. 
Computation shows
\begin{eqnarray}
&& |c^\top x-\nu^\top \1_{2n}|
=|c^\top x-\nu^\top Mx+\nu^\top (Mx-\1_{2n})|\\
&\le& |(c-M^\top \nu)^\top x|+\|P\nu\|_2 \|M x-\1_{2n}\|_2\\
&= &  -t^{-1} \langle \IT^{-1}( \1_{\Sigma}), x\odot \log x\rangle+ \|P \nu\|_2 \|M x-\1_{2n}\|_2\\
&\le &  (et)^{-1}|\Sigma|  +\delta \epsilon,
\end{eqnarray}
where  the last inequality is derived from $x\log x\ge -e^{-1}$.

%
 \end{proof}
This result is consistent with   empirical studies, where  solving a negative entropy regularized  optimal transport could be a  challenging problem, if the norm of the associated dual  vector  is  large.
  Later, we shall prove that   the required norm  bound can be  obtained  under  the total support condition. See Prop.~\ref{prop5.5} and Remark~\ref{3.6}.
%
%
%
%
%
%
%
%

%
%
%

   \begin{rem}[Parameters in  SNNE-sparse]\label{rem2.10}
 It could be  not easy to choose a proper parameter $\epsilon>0$ to meet the
desired sparsity. One practicable  manner is to select a parameter $k>0$ and let $\Sigma$ consist of those $(i,j)$ corresponding to (at most) $k$ smallest entries $(c-\textcolor{black}{M^\top \nu})_{i,j}$ for each row and each column. In this manner, $\Sigma_{\xi}$ consists of at most $(2k+1)n$ entries. 
 In section~\ref{sec_4.3},
we shall present  numerical experiments
 under a proper value $k$ to demonstrate the effectiveness.
\end{rem}

\section{Matrix balancing}
 Let $A$ denote  a positive  matrix  in $ \IR^{n\times n}$. 
Matrix balancing~\cite{Sinkhorn} aims to find a pair of positive \textit{scaling}  vectors $\{\zeta^{(1)}, \zeta^{(2)}\}$, so that the matrix balancing projection 
\[ A':=\diag(\zeta^{(1)})A\diag(\zeta^{(2)})\] is doubly stochastic, i.e.,  
\begin{eqnarray}
&&A' \1_n=\diag(\zeta^{(1)})A  \zeta^{(2)}=\diag(\zeta^{(1)})A\diag( \zeta^{(2)})\1_n=\1_n,\label{eq_75} \\ 
&&{A'}^\top  \1_n=\diag(  \zeta^{(2)} )A^\top   \zeta^{(1)} =(\diag(\zeta^{(1)})A\diag(\zeta^{(2)}))^\top \1_n=\1_n.\label{eq_76}
\end{eqnarray}  The existence of $\{\zeta^{(1)},\zeta^{(2)}\}$ is proved in~\cite{Sinkhorn},\cite{Knopp1967} for any positive matrix and any nonnegative matrix with total support, respectively.
Matrix scaling methods and its various applications in scientific computing, statistics and engineering can be found in the extensive survey~\cite{Idel2016} and the references therein. In general, the prescribed row sums and column sums do not have to be restricted to $\1_n$. See~\cite{Kalantari2008} and\cite{AllenZhu2017}.  
In the  section, we shall list a few  matrix scaling algorithms and their variants. 

\subsection{Sinkhorn-Knopp balancing(SK) and Knight-Ruiz(KR) method}
For the conditions in  (\ref{eq_75},\ref{eq_76}), the Sinkhorn-Knopp balancing(SK) (also known as the RAS  or biproportional problem\cite{Bacharach1970}) is
one well-known method  to carry out  matrix balancing on $A$,  consisting of iterates $\{(\zeta^{(1)}_k, \zeta^{(2)}_k): k=1,2,3,\ldots\}$,
\beqq\label{SKb}
\zeta^{(2)}_{k+1}=(A^\top \zeta^{(1)}_k)^{-1},\; 
\zeta^{(1)}_{k+1}=(A \zeta^{(2)}_k)^{-1}.
\eeqq
We can express (\ref{SKb}) in a symmetric manner\cite{knight:261}. Form  one symmetric matrix $\widetilde A$ from $A$, \beqq\label{eq_12}
\widetilde A:=\left(
\begin{array}{cc}
A_{1,1}& A_{1,2} \\
A_{2,1} & A_{2,2}\
\end{array}\right)=\left(
\begin{array}{cc}
0& A \\
A^\top & 0\
\end{array}\right).
\eeqq 
Let $\zeta_k:=[ \zeta^{(1)}_k; \zeta^{(2)}_k]$ be a sequence of the scaling vectors. When \beqq\label{SKbi}
\zeta^{(2)}_1=(A^\top \zeta^{(1)}_1)^{-1},\eeqq the SK algorithm in (\ref{eq_12}) can be expressed in a compact form,
\[
\zeta_{k+1}=(\widetilde A\zeta_k)^{-1},
\]
whose limit $\zeta =\lim_{k\to \infty} \zeta_k$ is actually a root  of \beqq\label{eq_8}
\mbg (\zeta):=\zeta\odot ( \widetilde  A\zeta)-\1_{2n}=0.
\eeqq
\begin{rem}[$(\rho^{(1)},\rho^{(2)})$-balancing] In this paper,  we focus on the application of point-set matching problems and thus consider the matrix balancing with $(\1_n, \1_n)$-balancing, i.e., the row sum and the column sum both $1$. In literatures, e.g., section 3 in \cite{Idel2016}, SK algorithms can be applied to reach a matrix with  row sum $\rho^{(1)}$ and  column sum $\rho^{(2)}$, where $(\rho^{(1)},\rho^{(2)})$ is not necessarily restricted to $(\1_n,\1_n)$. 
\end{rem}
 To solve the roots of $\mbg(\zeta)=0$, Knight and Ruiz \cite{Knight2012} proposed one Newton   method,
 \begin{eqnarray}\label{eq_7'}
\zeta_{k+1}&=&\zeta_k-(\diag(\zeta_k) \widetilde  A+\diag( \widetilde  A\zeta_k))^{\dagger} (\zeta_k\odot ( \widetilde A \zeta_k)-\1_{2n})\\
&=&\zeta_k\odot \left\{\1_{2n}-\left(B_k+\diag(B_k \1_{2n})\right)^{\dagger} (B_k \1_{2n}-\1_{2n})\right\}
\end{eqnarray}
 to alleviate   slow convergence of SK, where $B_k:=
 \diag(\zeta_k) \widetilde  A\diag(\zeta_k)
 $ is used. 
Compared with the SK algorithm, the Newton approach exhibits fast convergence. However, as mentioned in \cite{Knight2012},  the global convergence  property of (\ref{eq_7'}) is theoretically unclear. 

\subsection{ Negative entropy(NE) based matrix balancing}\label{NE_sec}
We describe one algorithm proposed in \cite{Cohen2017,Brauer2017}, which   implements Newton's method for matrix balancing in  (\ref{xlogx}) or in (\ref{eq_49'}).  To simplify the notation,  consider 
  \beqq\label{defA} A=\exp(- t\IT(c))\odot \1_\Sigma \in \IR^{n\times n},
\textrm{ with } t=1 
\eeqq
where $\Sigma$ is the support set used in SNNE-sparse.
Introduce the symmetric $2n\times 2n$-matrix $\widetilde A$  as in (\ref{eq_12}).
Write   the scaling vector $\exp(\nu)$ of $\widetilde A$  with  $\nu:=[\nu^{(1)};  \nu^{(2)}]$ and $\nu^{(1)}\in \IR^n$ and $\nu^{(2)}\in \IR^n$. 
Matrix balancing on $A$ can be solved by the convex optimization (i.e., \textcolor{black}{ the  problem} in (\ref{eq_dual})),
\beqq\label{eq_58}
\min_{\nu \in \IR^{2n}} \left\{\mbf (\nu )=\frac{1}{2}\langle \exp(\nu ),\widetilde A \exp(\nu )\rangle-\langle \1_{2n},\nu \rangle\right\}.
\eeqq Indeed, reformulate 
 (\ref{eq_58}) as follows:
\beqq
\mbf (\nu )=\langle \exp(\nu^{(1)}), (\exp(- \IT(c)) \odot \1_\Sigma) \exp(\nu^{(2)})\rangle-\langle \1_{n}, \nu^{(1)}\rangle-
\langle \1_{n}, \nu^{(2)}\rangle.
\eeqq
For simplicity, let
 $\mbB(\nu)$ denote the scaled  matrix of $A$, 
  \beqq
\mbB(\nu):=\exp(-\IT(c-M^\top \nu))\odot \1_\Sigma=\diag(\exp(\nu^{(1)}))(A \diag(\exp(\nu^{(2)})),\textrm{ where  } \IT(M^\top \nu)=
 \nu^{(1)} \1_n^\top+\1_n {\nu^{(2)}}^\top.\eeqq
We can express $\mbf$ as 
\beqq
\mbf(\nu)= \langle \1_n, \mbB(\nu) \1_n \rangle-\langle \1_{2n}, \nu\rangle.
\eeqq
 First,  a scaling vector  $\nu$ with $\nabla \mbf(\nu)=0$ yields  the double stochastic matrix $\mbB(\nu)$.
  Indeed,    \begin{eqnarray}
&&\nabla \mbf(\nu)= M (\exp(-(\IT(c -M^\top \nu) )  ) \odot \1_\Sigma)\1_n -\1_{2n}
=
\left(\begin{array}{c}
\exp(\nu^{(1)})\odot (A\exp(\nu^{(2)}))\\
\exp(\nu^{(2)})\odot (A^\top \exp(\nu^{(1)}))\\
\end{array}\right)-\1_{2n}\\&=&\left(\begin{array}{c}
\mbB(\nu)-I_n\\
(\mbB(\nu)-I_n)^\top 
\end{array}\right)\1_n.\label{gradf}
\end{eqnarray}
Second, the Hessian computation  verifies 
 the convexity of $\mbf$. 
Computation shows 
\begin{eqnarray}
\nabla^2 \mbf(\nu)&=& 
\left(\begin{array}{cc}
\diag(
(\exp(- \IT(c-M^\top \nu) ) \odot \1_\Sigma)\1_n) & \exp(- \IT(c-M^\top \nu)) \odot \1_\Sigma\\
(\exp(- \IT(c-M^\top \nu))\odot \1_\Sigma)^\top  &
 \diag( (\exp(- \IT(c-M^\top \nu) \odot \1_\Sigma) )^\top \1_n) \end{array}\right)\\
&=&\left(\begin{array}{cc}
\diag(\mbB(\nu)\1_n) & \mbB(\nu)\\
\mbB(\nu)^\top  & \diag(\mbB(\nu)^\top \1_n) \end{array}\right).\label{eq_80}
\end{eqnarray}
The following Newton's method, called Negative entropy  method(NE),
employs  step size   given by backtracking line search to compute 
a minimizer of the problem in (\ref{eq_58}), i.e., 
\beqq\label{eq_79}
\nu_{k+1}=\nu_k-\alpha (\nabla^2 \mbf(\nu_k))^{\dagger}\nabla \mbf(\nu_k).
\eeqq
Convergence arguments are standard. See  section 9.5.3~\cite{Boyd}. 
The following shows the consistency analysis.
\begin{prop}\label{Prop 3.2} Suppose the matrix $A$ in (\ref{defA}) is  nonnegative and  has  support.  Then the system 
\beqq \label{eq_cons}\nabla^2 \mbf(\nu_k) w=-\nabla \mbf(\nu_k)\eeqq is consistent for some vector $w\in \IR^{2n}$. \textcolor{black}{In addition,  for nonzero $\nabla \mbf(\nu_k)$, let $u=-(\nabla^2 \mbf(\nu_k))^\dagger \nabla \mbf(\nu_k)$. Then we have the squared \textbf{Newton decrement}
\beqq\label{eq_60'}
\langle u, \nabla^2 \mbf(\nu_k)u \rangle=
\langle \nabla \mbf(\nu_k), (\nabla^2 \mbf(\nu_k))^\dagger \nabla \mbf(\nu_k) \rangle>0.
\eeqq
 }
\end{prop}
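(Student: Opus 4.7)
The plan is to establish two separate facts about the symmetric PSD Hessian $H := \nabla^2 \mbf(\nu_k)$ and the gradient $g := \nabla \mbf(\nu_k)$: (i) $g$ lies in the range of $H$, which gives consistency; and (ii) the resulting quadratic form is strictly positive when $g \neq 0$.

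For part (i), the key observation is that $H = M \diag(\mathrm{vec}(\mbA(\nu_k))) M^\top$ in view of (\ref{eq_80}) and (\ref{Sk}), so
$$\mathrm{null}(H) = \{w = [w^{(1)}; w^{(2)}] : w^{(1)}_i + w^{(2)}_j = 0 \text{ for every } (i,j) \in \Sigma\},$$
since $\IT(M^\top w) = w^{(1)} \1_n^\top + \1_n {w^{(2)}}^\top$ and the only $(i,j)$-entries of $\mbA(\nu_k)$ that contribute are those in the support set $\Sigma$. Because $H$ is symmetric PSD, consistency of $Hw = -g$ is equivalent to $g \perp \mathrm{null}(H)$. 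Taking $w \in \mathrm{null}(H)$ and using the expression (\ref{gradf}) for $g$, I would compute
$$\langle w, g \rangle = \sum_{(i,j) \in \Sigma} \mbA(\nu_k)_{i,j}(w^{(1)}_i + w^{(2)}_j) - \sum_i w^{(1)}_i - \sum_j w^{(2)}_j,$$
where the first sum vanishes by the null-space condition. To kill the remaining two sums, I would invoke the support hypothesis on $A$: there exists a permutation $\sigma$ with $(i, \sigma(i)) \in \Sigma$ for all $i$, so $w^{(1)}_i + w^{(2)}_{\sigma(i)} = 0$; summing over $i$ gives exactly $\sum_i w^{(1)}_i + \sum_j w^{(2)}_j = 0$. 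This is the step where the hypothesis ``$A$ has support'' is essential and is the main conceptual point of the argument.

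For part (ii), I would use the standard identity $H H^\dagger H = H$ together with the symmetry of $H^\dagger$. Substituting $u = -H^\dagger g$,
\begin{equation*}
\langle u, H u \rangle = \langle H^\dagger g, H H^\dagger g \rangle = \langle g, H^\dagger H H^\dagger g \rangle = \langle g, H^\dagger g \rangle,
\end{equation*}
which establishes the claimed equality. Positivity then follows because by part (i) we may write $g = H v$ for some $v$, whence $\langle g, H^\dagger g \rangle = \langle v, H H^\dagger H v \rangle = \langle v, H v \rangle \ge 0$; the PSD matrix $H$ restricted to its range is positive definite, so equality would force $Hv = g = 0$, contradicting the assumption $g \neq 0$. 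Hence the squared Newton decrement is strictly positive, completing the proposition. The most delicate portion is part (i), and in particular the combinatorial use of the support assumption to obtain the identity $\sum_i w^{(1)}_i + \sum_j w^{(2)}_j = 0$; everything else is linear algebra boilerplate for symmetric PSD matrices and their pseudoinverses.
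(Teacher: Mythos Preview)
Your proposal is correct and follows essentially the same approach as the paper's proof: both characterize $\mathrm{null}(\nabla^2\mbf)$ via the quadratic form $\sum_{i,j}\mbA_{i,j}(w^{(1)}_i+w^{(2)}_j)^2$, use the support permutation $\sigma$ to deduce $\sum_i w^{(1)}_i+\sum_j w^{(2)}_j=0$, and then verify $\langle w,\nabla\mbf\rangle=0$ to obtain consistency. Your treatment of part~(ii) via the Moore--Penrose identity $HH^\dagger H=H$ is slightly more explicit than the paper's terse appeal to positive semidefiniteness and the shared null space of $H$ and $H^\dagger$, but the substance is the same.
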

\begin{proof}
For each vector $w=[w^{(1)}; w^{(2)}]\in \IR^{2n}$ with $w^{(1)}\in \IR^n$, $w^{(2)}\in \IR^n$, the Hessian $\nabla^2 \mbf(\nu)$ is symmetric diagonally dominant\cite{Cohen2017, AllenZhu2017}, thus
the convexity of $\mbf$ is verified from \beqq\label{eq_78}
\langle w, \nabla^2 \mbf(\nu) w\rangle
=\sum_{i=1}^n\sum_{j=1}^n A_{i,j} e^{\nu^{(1)}_i} e^{\nu^{(2)}_j}(w^{(1)}_i+w^{(2)}_j)^2\ge 0,
\eeqq
 For each  vector  $w$ in the null space of $\nabla^2 \mbf$, 
 from (\ref{eq_78}), $w$ satisfies
 $\langle w, \nabla^2 \mbf(\nu) w\rangle=0$, which  implies \beqq\label{eq_80'}
w^{(1)}_i+w^{(2)}_j=0 \textrm{ for all $A_{i,j}>0$.}
\eeqq Since $A$ has support, then  $\{A_{i,\sigma(i)}: i=1,2,\ldots, n\}$ are all positive for some permutation $\sigma$. 
Since $A_{i, \sigma(i)}>0$, then any vector $w$ in the null space of $\nabla^2 \mbf$ satisfies $w_i^{(1)}=-w_{\sigma(i)}^{(2)}$ and has the form 
\beqq
w:=[w^{(1)}; w^{(2)}]=[w_1, w_2, \ldots, w_n, -w_{\sigma^{-1}(1)},\ldots, -w_{\sigma^{-1}(n)}]^\top.
\eeqq
Clearly,   $\langle w^{(1)}, \1_{n}\rangle+\langle w^{(2)}, \1_{n}\rangle=0$ holds. Thus, we have the orthogonality between $-\nabla \mbf(\nu_k)$ and the null space of $\nabla^2 \mbf(\nu)$. Indeed,  \begin{eqnarray}
&&\langle w, \nabla \mbf(\nu_k)\rangle={w^{(1)}}^\top (\mbB(\nu_k)-I_n) \1_n+ {w^{(2)}}^\top (\mbB(\nu_k)-I_n)^\top \1_n
\\
&=&\sum_{i=1}^n \sum_{j=1}^n (w_i^{(1)}+w_j^{(2)})
 A_{i, j}e^{\nu^{(1)}_i} e^{\nu^{(2)}_j}=0,
\end{eqnarray} where  we used
   (\ref{eq_80'}). 
Hence, $-\nabla \mbf(\nu_k)$ lies in the range of $\nabla^2 \mbf$, which verifies that  the system in (\ref{eq_cons}) is consistent. 
Finally, we obtain  (\ref{eq_60'}) according to the positive semi-definite property in  (\ref{eq_78}) and the following observation. Since  $\nabla \mbf(\nu_k)$ is orthogonal to the null space of $\nabla^2 \mbf(\nu_k)$, then $\nabla \mbf(\nu_k)$ is orthogonal to the null space of $(\nabla^2 \mbf(\nu_k))^\dagger$.
\end{proof}
Since
$\nabla^2 \mbf(\nu_k) (\nu_{k+1}-\nu_k)=-\nabla \mbf(\nu_k)$ is consistent,
the  Newton iterations in (\ref{eq_79})
 can be employed  to find $\nu$ with  $\nabla\mbf (\nu)=0$, e.g., 
  the conjugate gradient method\cite{Brauer2017}. \textcolor{black}{
 Note that  the squared Newton decrement in (\ref{eq_60'}) can be interpreted as the directional derivative of $\mbf$ in the direction of $u$,  
 \beqq
 -\langle u, \nabla^2 \mbf(\nu_k)u \rangle=\langle\nabla \mbf(\nu_k), u \rangle=\frac{d}{d\alpha} \mbf(\nu_k+\alpha u )|_{\alpha=0}.
 \eeqq
Thanks to (\ref{eq_60'}),  when $\nabla\mbf (\nu_k)\neq 0$,  the step size $\alpha>0$ can be  chosen properly to decrease the objective $\mbf$.}

\begin{rem}Consider   the application in SNNE, i.e., the balancing  in  (\ref{xlogx}).  Note that the Hessian $\nabla^2 \mbf$ is exactly the Schur complement matrix $\widetilde M$ described in (\ref{Sk}). When  
 $\IT(x)=\exp(- t\IT(c-M^\top \nu))$ heads to an optimal permutation with $t\to \infty$,
 the Hessian matrix will easily undergo a rank-reduction process.
  Hence, using a regularized Hessian matrix  as in (\ref{M_eps}) is suggested in empirical algorithms for (\ref{eq_79}). 
\end{rem}

\subsection{ Logarithmic barrier functions(LB) based matrix balancing}\label{LB_sec}
We provide another Newton method, called Logarithmic barrier (LB) based matrix balancing,
 to compute scaling vectors of matrix balancing.  The LB iterations will be stated in (\ref{eq_14''}). The introduction can shed light on convergence of  Knight-Ruiz algorithm. 
Consider a nonnegative matrix $A$.  Define $\widetilde  A$ as in (\ref{eq_12}). Consider  the  minimization of $\mbg$, \beqq\label{eq_10}
\min_{\zeta>0} \{\mbg (\zeta)=\frac{1}{2} \zeta^\top  \widetilde  A\zeta-\1_{2n}^\top  \log \zeta\}.
\eeqq
The  objective function in (\ref{eq_10}) is identical to the function in (\ref{eq_58}), except for $\nu$ replaced with $\log \zeta$. 
In~\cite{Marshall1968}, the function $\mbg$ is employed to show the existence of matrix-scaling on a fully indecomposable matrix. In \cite{Khachiyan1992}, authors   proposed  one path-following  Newton algorithm, minimizing a sequence of 
  sub-problems to scale a symmetric positive semi-definite matrix $\widetilde A$, so that    convergence requirement  of   Newton iterates can be met in each sub-problem. 
  Here, we propose  a  modified Newton method for the computation of matrix balancing for one positive matrix $A$. 


Compute 
  the gradient and  the Hessian of $\mbg$,
 \beqq\label{eq_9} \nabla \mbg= \widetilde  A\zeta-\zeta^{-1},\;
\nabla^2 \mbg(\zeta)=\widetilde  A+\diag(\zeta^{-2}), \eeqq
 respectively. 
 First, from (\ref{eq_9}),
 the Sinkhorn-Knopp balancing  is 
 the coordinate descent iteration of $\mbg(\zeta)$ with $\zeta=[\zeta^{(1)}; \zeta^{(2)}]$,
 \beqq
\zeta_{k+1}^{(1)}\leftarrow arg\min_{\zeta^{(1)}} \mbg([\zeta^{(1)}; \zeta^{(2)}_k]),\; 
\zeta_{k+1}^{(2)}\leftarrow arg\min_{\zeta^{(2)}} \mbg([\zeta_{k+1}^{(1)}; \zeta^{(2)}]).
 \eeqq
Thus, SK balancing   decreases the objective $\mbg$ in (\ref{eq_10}). 
 Second,  
suppose a minimizer $\zeta$ is an interior point in $\IR^{2n}_+$. 
Clearly,  $\zeta$  is a root to (\ref{eq_8}), i.e., $\widetilde  A\zeta=\zeta^{-1}$. 
Write $\zeta=\exp(\nu)$  component-wise with some vector $\nu$. From (\ref{eq_78}), $\mbg(\exp(\nu))=\mbf(\nu)$ is convex in $\nu$ and a local minimizer of $\mbg$ is actually the global minimizer of $\mbg$.   
Let us 
employ one damped Newton iteration to reach the global minimizer, where step size $\alpha_k$ is selected to minimize $\mbg(\zeta_k-\alpha_k (\nabla^2 \mbg(\zeta_k))^{-1}\nabla \mbg(\zeta_k))$ in (\ref{eq_10}), for $k=1,2,3,\ldots$,
 \begin{eqnarray}
\zeta_{k+1}&=&\zeta_k-\alpha_k (\nabla^2 \mbg(\zeta_k))^{-1}\nabla \mbg(\zeta_k)=\zeta_k- \alpha_k (\widetilde A+\diag(\zeta_k^{-2}))^{-1}(\widetilde A \zeta_k-\zeta_k^{-1})
 \\
&=&\zeta_k- \alpha_k\diag(\zeta_k) \left(\diag(\zeta_k)(\widetilde A+\diag(\zeta_k^{-2})) \diag(\zeta_k) \right)^{-1} \left(\zeta_k\odot (\widetilde A\zeta_k-\zeta_k^{-1})\right)
 \\
 &=&\zeta_k- \alpha_k \zeta_k\odot \left( I_{2n}+B_k  \right)^{-1} (B_k \1_{2n}- \1_{2n}),\label{eq_13}
 \end{eqnarray}
with \beqq
\label{eq_A} 
 B_k:=\diag(\zeta_k) \widetilde  A\diag(\zeta_k).\eeqq 
Since the matrix $ I_{2n}+B_k$ in (\ref{eq_13}) is not necessarily positive definite, the iteration in (\ref{eq_13}) is not globally convergent.  Instead, 
consider
a  modified  Newton \textcolor{black}{ iteration} (called
LB matrix balancing scheme
)
 \beqq\label{eq_14''}
\zeta_{k+1}=\zeta_k\odot \{ \1_{2n}- \alpha_k ( C_k +B_k  )^{\dagger} ( B_k-I_{2n}) \1_{2n}\},
 \eeqq
  where $I_{2n}$ in (\ref{eq_13}) is replaced with 
the positive diagonal matrix, 
\beqq\label{eq_c} C_k= \diag(\1_{2n}\odot (B_k \1_{2n})).\eeqq

\begin{rem}[Safeguard parameter $\epsilon_+$]\label{safe}

We  implement (\ref{eq_14''}) as follows. 
For each $k$,
compute $B_k$ and $C_k$ from (\ref{eq_A}, \ref{eq_c}),
and
 $u_k:=-
 \zeta_k \odot ( C_k + B_k  )^{\dagger} ( B_k-I_{2n}) \1_{2n}$.
 Use conjugate gradient to solve \beqq
 y_k:=( C_k +B_k )^\dagger ( B_k-I_{2n}) \1_{2n}\eeqq from the consistent system, 
\beqq\label{eq_14'} ( C_k +B_k )y_k=   ( B_k-I_{2n}) \1_{2n}.
\eeqq
The step size $\alpha_k$  is chosen to  ensure the decrease of $\mbg$ and
  $ \zeta_{k+1}=\zeta_k\odot (1-\alpha_k y_k)>0$.
For $\zeta_{k+1}>0$, we introduce a safeguard parameter $\epsilon_+\in (0,1)$ and $\alpha$ is chosen within $(0, y_{\max}^{-1}(1-\epsilon_+)]$, where $y_{\max}$ is the largest positive entry of $y_k$. Indeed, 
 $\zeta_{k+1}=\zeta_k+\alpha u_k=\zeta_k\odot (1-\alpha y_k)\ge  \zeta_k \epsilon_+>0$.

\end{rem}

In the following, we shall discuss the wellposeness of LB and show  the step size  of  LB tending to $1$ near an optimal solution.

\subsubsection{Well-definedness   of LB in (\ref{eq_14''})}
The following proposition shows the well-definedness of $(C_k+B_k)^\dagger (B_k-I_{2n})\1_{2n}$ in (\ref{eq_14''}). 
Also, we  calculate 
 the directional derivative of $\mbg$ in the direction of \beqq\label{eq_88}
 u_k:={-}
 \zeta_k\odot ( C_k +B_k  )^{\dagger} ( B_k-I_{2n}) \1_{2n}
 \eeqq with  $\|u_k\|>0$,
 which sheds some light on the convergence of this Newton method, 
 \begin{eqnarray}
&&\frac{d}{d\alpha}\mbg(\zeta_k+\alpha u_k)|_{\alpha=0}=\langle \nabla \mbg(\zeta_k),u_k\rangle
\\
&=&-\langle ( B_k-I_{2n} )\1_{2n}, ( C_k +B_k  )^{\dagger} ( B_k-I_{2n}) \1_{2n}
\rangle<0.\label{eq_97}
\end{eqnarray}
In the following,  we shall verify the calculation in (\ref{eq_97}).
We introduce $H(\zeta)$ in (\ref{Hdef}) to investigate the null space of $B_k+C_k$. Note that $H(\zeta_{k})=C_k+B_k$.

\begin{prop}\label{prop3.3} Consider   one matrix  $A\in \IR^{n\times n}$, which   is nonnegative and has support. Let $\widetilde A$ be given in (\ref{eq_12}) and $B=\diag(\zeta) \widetilde A \diag(\zeta)$. Let  $C=\diag( B \1_{2n})$. 
Then $C +B$ is symmetric and  positive semi-definite and
the system \beqq
(C +B  ) y =(
B-I_{2n}) \1_{2n}\eeqq
 is  consistent. 
 In addition,  introduce the null space of $C+B$,
 \beqq\label{cN}
\cN:=\{w=[w^{(1)}; w^{(2)} ]:  w_i^{(1)}+w_j^{(2)}=0, \; \forall  (i,j) \textrm{ with }A_{i,j}>0\}.
\eeqq
  For any positive vector $\zeta\in \IR^{2n}$ and  for any null vector  $w\in \cN$, the function  $\mbg$ takes a constant value, as $\zeta\to \zeta\odot \exp(w)$, i.e.,
 \beqq
\mbg(\zeta\odot \exp(w))=\mbg(\zeta).
\eeqq
Introduce \beqq \label{Hdef}
H(\zeta)=\diag(\zeta\odot  (\widetilde A \zeta))+\diag(\zeta) \widetilde A \diag(\zeta).\eeqq 
Then $\cN$ is
 the null space of $H(\zeta)$ for any positive vector $\zeta$.

 \end{prop}
\begin{proof}
By Gershgorin circle theorem, the symmetric matrix  $C +B \succeq 0$ is diagonally dominant and thus is
a positive semi-definite  matrix. Actually, for each vector $w=[w^{(1)}; w^{(2)}]\in \IR^{2n}$,
\beqq
\langle w, (C+B) w\rangle=
\sum_{i=1}^n \sum_{j=1}^n A_{i,j}\zeta_i^{(1)}\zeta_j^{(2)} (w^{(1)}_i+w^{(2)}_j)^2\ge 0.
\eeqq
Hence,  each null vector $w$ of $C +B$ satisfies  \beqq \label{eq_78'} w^{(1)}_i+w^{(2)}_j=0 \textrm{ for all $A_{i,j}>0$, }\eeqq
which justifies (\ref{cN}).
Since $A$ has support, then for some permutation $\sigma$, we have
$A_{i,\sigma(i)}>0$ for $i=1,\ldots, n$. 
Hence, (\ref{eq_78'}) implies \beqq\label{eq_79'}
\sum_{i=1}^n  w_i^{(1)}+\sum_{j=1}^n w_j^{(2)}
=\sum_{i=1}^n  w_i^{(1)}+\sum_{i=1}^n w_{\sigma(i)}^{(2)}
=0.
\eeqq 
 Next, we show that  $(
B-I_{2n}) \1_{2n}$ lies in the range of $(C+B)$. Indeed, for each null vector $w$,  using (\ref{eq_78'}) and (\ref{eq_79'}), we have $(
B-I_{2n}) \1_{2n}$ is orthogonal to the null space of $(C+B)^\top=C+B$, i.e., 
\beqq
\langle w, 
(
B-I_{2n}) \1_{2n}
 \rangle=\sum_{i=1}^n \sum_{j=1}^n 
 (w_i^{(1)}+w_j^{(2)}) \zeta_i^{(1)} \zeta_j^{(2)} A_{i,j}-(\sum_{i=1}^n  w_i^{(1)}+\sum_{j=1}^n w_j^{(2)})=0.
\eeqq 
The  above orthogonality arguments  also implies  (\ref{eq_97}). Again from 
(\ref{eq_78'}) and (\ref{eq_79'}), we have
 \begin{eqnarray}
&& \mbg(\zeta\odot \exp(w))=
 \sum_{i=1}^n \sum_{j=1}^n  A_{i,j} 
\zeta^{(1)}_i \zeta^{(2)}_j  \exp( w^{(1)}_i+w^{(2)}_j )-\langle \1_{2n}, \log \zeta\rangle 
-\langle  \1_{2n}, w \rangle\\
&=& 
 \sum_{i=1}^n \sum_{j=1}^n  A_{i,j} 
\zeta^{(1)}_i \zeta^{(2)}_j  
-\langle \1_{2n}, \log \zeta\rangle 
=\mbg(\zeta).
 \end{eqnarray}
 Finally,  observe  that 
 $
 \langle  w, H(\zeta) w \rangle=\sum_{i=1}^n \sum_{j=1}^n A_{i,j} \zeta^{(1)}_i \zeta^{(2)}_j (w^{(1)}_i+w^{(2)}_j)^2=0
 $
 if and only if  $w\in \cN$. Thus, $\cN$ is  the null space of $H(\zeta)$ for any positive vector $\zeta$.
\end{proof}

%
%
%

\subsubsection{Relation between KR and LB}

First,  we make one observation. 
\begin{rem}[KR method is a special case with $\alpha_k=1$]
Note that the LB method in (\ref{eq_14''}) with $\alpha_k=1$ coincides with the algorithm proposed by Knight and Ruiz in (\ref{eq_7'}).
As $k$ increases, the objective values $\mbg(\zeta_k)$  decrease monotonically.  As $B_k$ tends to \textcolor{black}{be} a doubly stochastic matrix, we have $C_k=\diag(B_k \1_{2n})\to I_{2n}$ and the LB method  reduces to Newton's method in  (\ref{eq_7'}), i.e., KR method.
 \end{rem}
In the following,  we  demonstrate that the step size $\alpha_k$ of LB is $1$ for sufficiently large $k$. 
To proceed, 
we start with  some boundedness   related to  the sequence $\{ \zeta_k: \mbg(\zeta_k)\le c_0, \; k=1,2,\ldots\}$ under  total support assumption on $A$. 
 For notation simplicity, we drop the subscript $k$. 
\begin{prop} \label{prop5.5}Suppose $A\in \IR^{n\times n}$ has total support. Let $\Sigma:=\{(i,j): A_{i,j}>0\}$. 
Let $\delta$ be a positive lower bound for $\{A_{i,j}: (i,j)\in \Sigma\}$.   Fix some $c_0\in \IR$.
Let  $\zeta=[\zeta^{(1)}, \zeta^{(2)}]$ be a positive vector in the $c_0$-sublevel set of $\mbg$,  i.e., $\mbg(\zeta)\le c_0$.
 Then 
$\{ \zeta^{(1)}_i \zeta^{(2)}_j: (i,j)\in \Sigma\}$
 are bounded below by \beqq\label{eq_82'}
\exp(-c_0+(n-1)(1+\log \delta))
\eeqq
and bounded above by 
\beqq\label{eq_82''}
\max(\delta^{-1}  (c_0-(n-1)(1+\log \delta)),1).
\eeqq
In particular, for
any  $\zeta$  with $\mbg(\zeta)\le c_0$,
$\|(\zeta^{(1)} {\zeta^{(2)} }^\top)  \odot \1_\Sigma\|$ is bounded above by  some constant only 
depending on $c_0$ and $\delta$.
\end{prop}
\begin{proof}
Fix one entry $A_{i_1, j_1}>0$. 
By assumption,  $A$ has total support,
and thus   $(i_1, j_1)$ lies on  some diagonal $\{(i, \sigma(i)):i\in\{1,2,\ldots, n\} \}$. 
Then 
\beqq\label{eq_84'}
\sum_{i=1}^n\{ A_{i, \sigma(i)} \zeta^{(1)}_i \zeta^{(2)}_{\sigma(i)}-\log (\zeta^{(1)}_i \zeta^{(2)}_{\sigma(i)})\} \le 
\mbg(\zeta)=
{ \langle \zeta^{(1)}, A\zeta^{(2)}\rangle } -\langle \1_{2n}, \log \zeta\rangle\le c_0.
\eeqq
By convexity, the following inequality holds  for each $a>0$, 
\beqq\label{eq_83'}
\min_{x\ge 0} (ax -\log x)\ge 1+\log a.
\eeqq
Applying  (\ref{eq_83'}) to the right hand side of (\ref{eq_84'}) for those   $i\neq i_1$, 
we have
\beqq\label{eq_111}
\sum_{i=1}^n\{ A_{i, \sigma(i)} \zeta^{(1)}_i \zeta^{(2)}_{\sigma(i)}-\log (\zeta^{(1)}_i \zeta^{(2)}_{\sigma(i)})\}
\ge A_{i_1, j_1}\zeta^{(1)}_{i_1} \zeta^{(2)}_{j_1}-\log (\zeta^{(1)}_{i_1} \zeta^{(2)}_{j_1})+(n-1)(1+\log \delta).
\eeqq
Together with (\ref{eq_84'}), dropping the positive term $A_{i_1, j_1}\zeta^{(1)}_{i_1} \zeta^{(2)}_{j_1}$ in (\ref{eq_111}), we have (\ref{eq_82'}).
Likewise, for an upper bound, when $\zeta^{(1)}_{i_1} \zeta^{(2)}_{j_1}\ge 1$, we can 
  drop $-\log (\zeta^{(1)}_{i_1} \zeta^{(2)}_{j_1})$ in (\ref{eq_111}),
which yields  the upper bound in (\ref{eq_82''}).

\end{proof}

\begin{rem}\label{3.6} Let $\1_\Sigma:=(A>0)$.
When $A$ has total support, then $\zeta_k=[\zeta_k^{(1)} ,\zeta_k^{(2)} ]$ from (\ref{eq_14''})  generates  a bounded matrix $(\zeta_k^{(1)} {\zeta_k^{(2)}}^\top)\odot \1_\Sigma \in \IR^{n\times n}$. Express the $k$-th iterate $\zeta_k$ as $\zeta_k=\exp(\nu_k)$ with $\nu_k=[\nu_k^{(1)}; \nu_k^{(2)} ]$. Introduce a linear transform $\IB$,
\beqq
\IB(\nu_k):= \1_\Sigma\odot 
\log(\zeta_k^{(1)} {\zeta_k^{(2)}}^\top) =\1_\Sigma\odot 
\IT(M^\top \nu_k)=\IT((\IT^{-1}(\1_\Sigma))\odot M^\top \nu_k).
\eeqq 
From Prop.~\ref{prop5.5},  the null space of $\IB$ is the null space $\cN$ in (\ref{cN}), i.e., 
\beqq
\cN=\{w: \IT^{-1}(\1_\Sigma)\odot M^\top w=0\}=\{w:A\odot ( w^{(1)} \1_n^\top +\1_n {w^{(2)}}^\top)=0\}.
\eeqq
Let $P:\IR^{2n}\to \IR^{2n}$ be the orthogonal  projection with kernel  $\cN$.
  Let $m$ be the smallest singular value of $\IB$. Then 
$\| \IB \nu_k\|\ge m \| P \nu_k\|$.
Hence,
the boundedness  $(\zeta_k^{(1)} {\zeta_k^{(2)}}^\top)\odot \1_\Sigma$ actually indicates the boundedness of $\{\|P\nu_k\|: k=1,2,3,\ldots\}$, when  $\{\nu_k\}$ and $\{\zeta_k\}$ are chosen to minimize $\mbf(\nu)$ or $\mbg(\zeta)$, respective. This justifies the norm assumption required in   Theorem~\ref{thm1}.

\end{rem}

The following theorem states  that LB iterates are exactly KR iterations,  when $k$ is sufficiently large. Since the proof is lengthy, we place it in the appendix. 
\begin{theorem}\label{LBstepsize}Suppose that  $A\in \IR^{n\times n}$ has  total support. For $k$ sufficiently large, the step size $\alpha_k$ in the LB iteration  is $1$. 
\end{theorem}

%

\subsection{Stability issues in practical algorithms}\label{sec3.4} When we balance a sequence of matrices with $t$ increasing, the norm of these scaling vectors will increase synchronously.  Without careful  numerical treatment, large numerical errors can easily occur  in KR, NE and LB algorithms.   
Two techniques  proposed in the  Stabilized Scaling algorithms\cite{Schmitzer2019} will be employed in our simulation studies of KR, NE and LB algorithms.

In the application of optimal transport, we are interested in 
  balancing  
 a sequence of matrices
 \beqq \label{probA} A=\exp(-t \IT(c))\eeqq for a sequence of $t$-sequence, 
 i.e., $\diag( \zeta^{(1)})A \diag( \zeta^{(2)})$ is doubly stochastic under some scaling vectors $\zeta^{(1)},  \zeta^{(2)}$. The first technique is that  
to avoid the numerical inaccuracy caused by the large entries in scaling vectors, we should execute matrix balancing algorithms  in the Log-Domain. For instance, in the LB method, we shall  avoid computing/storing  $\zeta^{(1)},  \zeta^{(2)}$ in matrix balancing algorithms. Instead, by expressing   $\zeta^{(1)}, \zeta^{(2)}$ as $ \zeta^{(1)}=\exp(t 
\nu^{(1)}) $ and $ \zeta^{(2)}=\exp(t \nu^{(2)})$ for some $\nu=[
\nu^{(1)}, \nu^{(2)}]$, we should conduct matrix balancing in terms of  $\nu^{(1)}$ and $\nu^{(2)}$. Hence, the LB iteration in  (\ref{eq_14''}) should be rewritten as 
\beqq
\nu_{k+1}= \nu_k+ t^{-1}\log (\1_{2n} -\alpha_k (C_k+B_k)^\dagger (B_k-I_{2n}) \1_{2n}),
\eeqq
and $C_k+B_k$ can be expressed as  \beqq
C_k+B_k=\left(\begin{array}{cc}
\diag( \1_\Sigma \odot \exp(- t \IT(c-M^\top \nu_k ))) \1_n & \1_\Sigma \odot \exp(- t\IT(c-M^\top \nu_k ))\\
(\1_\Sigma \odot \exp(- t\IT(c-M^\top \nu_k)) )^\top  & \diag( (
\1_\Sigma\odot \exp(- t\IT(c-M^\top \nu_k ) ))^\top \1_n) \end{array}\right).
\eeqq

 The second technique is  to
use  $\mu$-translation  to  reduce  numerical errors in matrix balancing computation. 
Suppose  the
scaling vectors $\{\exp(t\nu^{(1)} ),\exp(t\nu^{(2)})\}$
for the squared matrix $A=\exp(- t \IT(c))\in \IR^{n,n}$ is available. 
 Then  the squared (shifted) matrix \beqq \exp(-t(\IT(c)-M^\top \mu))=\diag(t\mu^{(1)})A \diag(t\mu^{(2)})
\eeqq can be balanced by translated  scaling vectors $\{\exp(t(\nu^{(1)} -\mu^{(1)} )),\exp(t(\nu^{(2)}-\mu^{(2)}) )\}$.
How should we choose 
 $\{\mu^{(1)},\mu^{(2)}\}$?
Suppose $\exp(- t_{k-1} \IT(c))$ can be balanced by scaling vectors $\{\exp(t_{k-1}\nu^{(1)} ),\exp(t_{k-1}\nu^{(2)})\}$.  When $t_{k-1}$ is sufficiently large,  $\{\exp(t_{k}\nu^{(1)} ),\exp(t_{k}\nu^{(2)})\}$ provides a  good  approximation  for scaling vectors of $\exp(- t_{k} \IT(c))$.  Thus, one good empirical choice is   $\mu^{(1)}=\nu^{(1)}$
and $\mu^{(2)}=\nu^{(2)}$. Once the scaling vectors
 $\{\exp(t_{k}\xi^{(1)}), \exp(t_{k}\xi^{(2)})\}$ 
 of the shifted matrix  \beqq \exp(-t_{k} \IT(c-M^\top \nu))\eeqq
are computed, we know that  the original matrix $\exp(-t_{k} \IT(c))$ in (\ref{probA}) can be balanced by scaling vectors 
$\{\exp(t_{k}(\nu^{(1)}+\xi^{(1)}) ),\exp(t_{k}
(\nu^{(2)}+\xi^{(2)} )
)\}$.
 In summary, we have the following algorithm for the problem in (\ref{probA}).
\begin{alg}\label{Alg3.8} Input: a matrix $\IT(c)\in \IR^{n\times n}$ and a sequence $t_1, t_2, \ldots, t_{\max}$ in $\IR$.
\begin{itemize}
\item Initialize $\nu_0=0_{2n}$. For $k=1,2,\ldots, k_{\max}$, repeat the following two steps:
\item Compute  a scaling vector $\exp(t\mu)\in \IR^{2n}$ which  balances the matrix $\exp(-t_{k}(\IT(c)-M^\top \nu_{k-1}) )$.
\item Update $\nu_{k}=\nu_{k-1}+\mu \in \IR^{2n}$. 
\end{itemize}
Output: $\nu_{k_{\max}}$. Here, the vector 
 $\exp(t_{\max} \nu_{k_{\max}}) $ balances the matrix  $\exp(-t_{\max} \IT(c))$. 
\end{alg}

\section{Numerical simulations}
We provide three experiments in the section: (i)  Comparison of  matrix balancing schemes; (ii)Comparison experiments of matrix balancing  in solving discrete optimal transport; (iii) Application of   sparse support algorithms on large data-sets.  

\subsection{Matrix balancing}

\subsubsection{ Comparison in matrix balancing }

We compare four matrix balancing methods, including
\begin{itemize}
\item   Sinkhorn-Knopp algorithm(SK) in (\ref{SKb});
\item three Newton method based algorithms:
\begin{itemize}
\item Knight-Ruiz method(KR) in (\ref{eq_7'});
\item Negative entropy  method(NE) in (\ref{eq_79});
\item Logarithmic barrier  method(LB) in (\ref{eq_14''}).
\end{itemize}

\end{itemize} 
We select
 three    matrices,   $A=\exp(-magic(20)/20)$ of size $20\times 20$, 
 $A=\exp(-magic(50)/20)$ of size $50\times 50$,  and  $A=\exp(-magic(200)/50)$ of size $200\times 200$. 
 Here magic($n$) produces  an $n\times n$ matrix  from the integers $1,2,\ldots, n^2$ with 
 with equal row/column/diagonal sums.
 See the top row of Fig.~\ref{magic} for the pattern visualization of  matrices $magic(20)$, $magic(50)$ and $magic(200)$.
  \begin{figure}
   \includegraphics[width=0.3\textwidth]{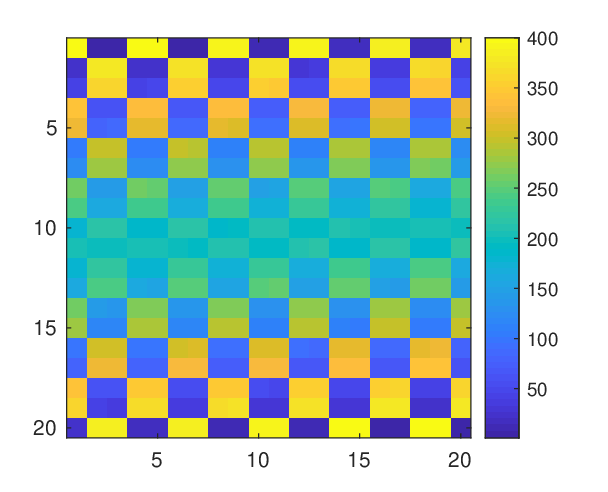} 
   \includegraphics[width=0.3\textwidth]{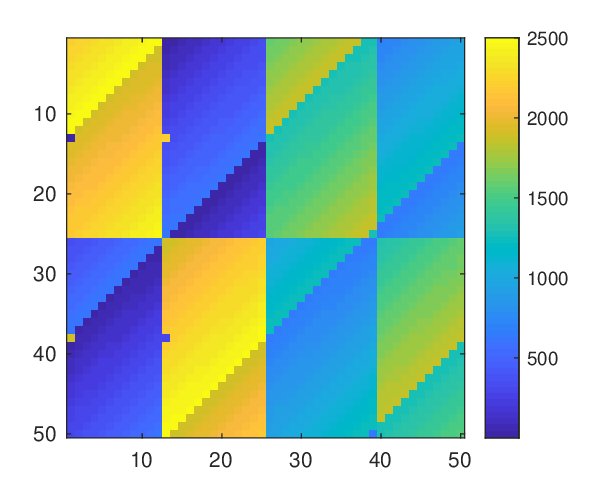} 
   \includegraphics[width=0.3\textwidth]{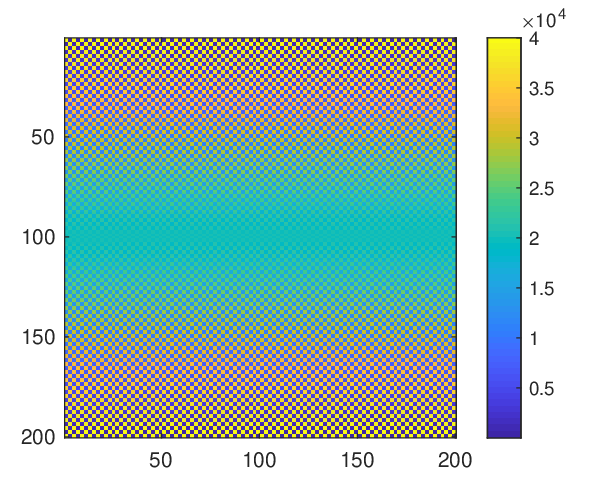} \\
   \includegraphics[width=0.3\textwidth]{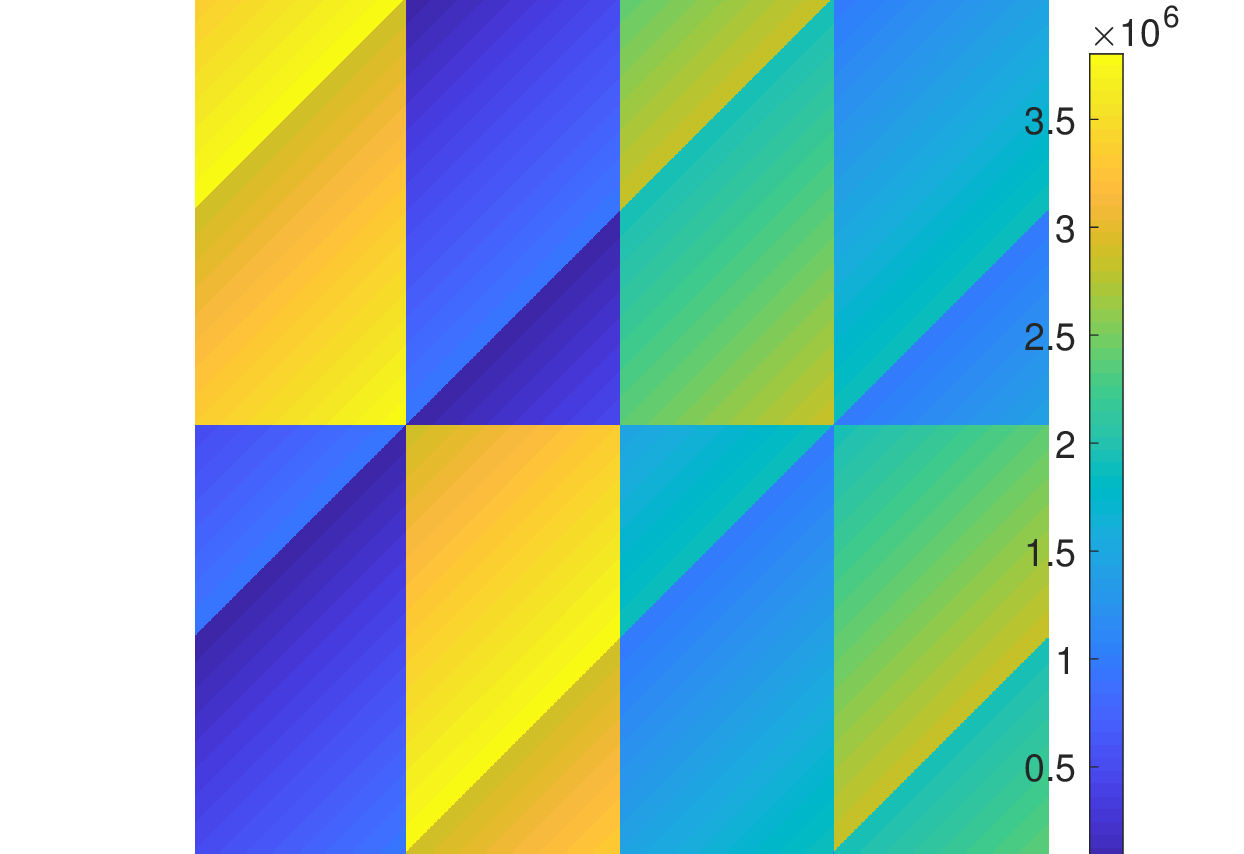} 
   \includegraphics[width=0.3\textwidth]{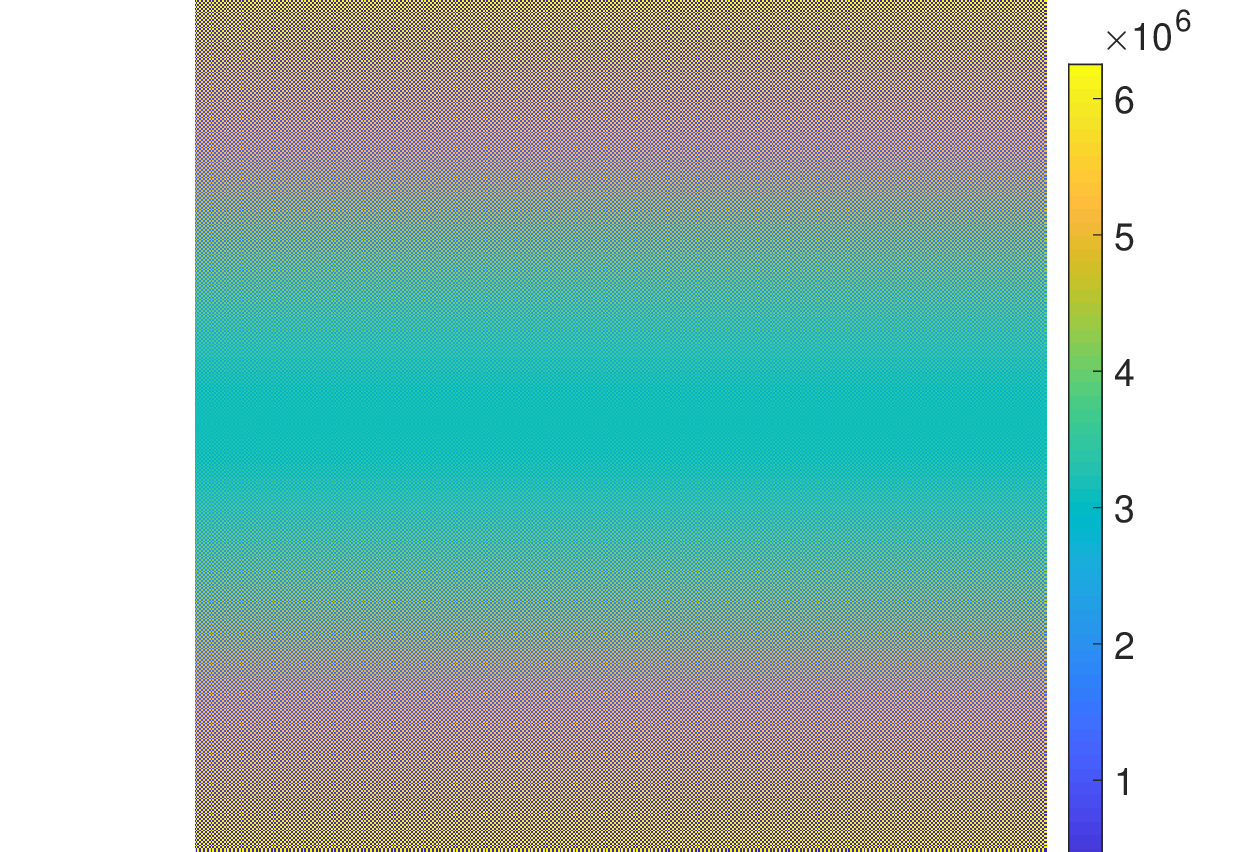} \\
 \caption{  ( Left to right subfigures in the top row show matrices
 $magic(20)$, $magic(50)$ and $magic(200)$, respectively. Left and right subfigures in the bottom row show $magic(1950)$ and $magic(2500)$, respectively.
}  \label{magic} 
 \end{figure}
At the $k$-th iteration, let $B_k:=\diag( \zeta^{(1)}_k) A \diag( \zeta^{(2)}_k)$ be the matrix corresponding to scaling vectors $\{ \zeta^{(1)}_k,  \zeta^{(2)}_k\}$ produced from matrix balancing algorithms. Consider  
the performance metric to evaluate the matrix balancing error:  \beqq\label{Error}
Error:=\| B_k \1_n -\1_n\|_1+\| B_k^\top  \1_n -\1_n\|_1.\eeqq

\begin{itemize}
\item First, we start with  the same initial vector $\1_n$ in the four methods. Results are reported in 
Figure~\ref{figure3}, where 
KR  empirically    gives very  fast convergence in the perspective of  CPU time. 
 Sinkhorn-Knopp algorithm, one popular algorithm,  typically  requires more iterations than  Newton methods.  However, thanks to its low complexity  in each iteration, SK can produce acceptable results  economically.
For instance, as shown in 
$A=\exp(-magic(20)/20)$  and $A=\exp(-magic(200)/50)$,
SK reaches a solution with error less than $10^{-2}$, much  faster than NE and LB.
On the other hand, SK has very poor convergence  in handling   $A=\exp(-magic(50)/20)$. 
 This case with $n=50$ is actually a challenging problem.
  Optimal scaling vectors $ \zeta^{(1)}, \zeta^{(2)}$ have norm  both greater than $10^{12}$, which suggest that  $\exp(-magic(50)/20)$  nearly does not have total  support. Under the circumstance,  all Newton methods give relatively slow convergence.

%
 
\item Second, we further  examine the case   $A=\exp(-magic(50)/20)$ from  the framework of negative entropic barrier functions. 
Consider a sequence of matrices $\exp(-t\cdot  magic(50) )$ with $t=1/160, 1/80, 1/40$ and $ 1/20$, respectively.
 The CPU time of these balancing tasks is reported in Table~\ref{tab:ex1_01}. 
Matrix balancing  task with small $t$ is easier  than those tasks with large $t$. For  $t=1/160, 1/80, 1/40, 1/20$, the geometric mean of the norm of  the  scaling vectors is 
\beqq
\|  \zeta^{(1)}\|^{1/2} \| \zeta^{(2)}\|^{1/2}= 2.31\times 10^1,\; 8.05\times 10^2,\;  1.61\times 10^6, \; 1.08\times 10^{13},
\eeqq
respectively.\footnote{As one reference,  $\|\zeta^{(1)} \|^{1/2} \| \zeta^{(2)}\|^{1/2}$ is   $1.663$ and $137.8$
for the problems  $\exp(-magic(20)/20)$ and $\exp(-magic(200)/20)$, respectively. } From Remark~\ref{3.6}, the norm growth of scaling vectors reflects that the matrices to be balanced nearly do not have total support. In addition, 
we  examine   the scaling vectors \beqq\label{eq_rc}
 \zeta^{(1)}=\exp( t \nu^{(1)}),  \zeta^{(2)}=\exp(t\nu^{(2)}),
\eeqq
by plotting  those entries of  dual vectors $\nu^{(1)}$ and $\nu^{(2)}$  in Fig.~\ref{nu12}. Observe the similarity among these  vectors $\nu^{(1)}$ and vectors $\nu^{(2)}$.  Fast convergence of Newton methods relies on the proximity of the initialization to the attractive basin.
Thanks to the similarity, we can speed up these Newton methods, when   the optimal scaling vectors of matrices  with previous   $t$ are employed as   warm starts.
Notice that the  CPU time  with $t=1/20$ is improved significantly,  compared with CPU time reported   in Fig.~\ref{figure3}.

\begin{table}
    \centering
\begin{tabular}{|c|c|c|c|c|}
\hline
 \multicolumn{5}{|c|}{$\exp(-magic(50)\cdot t)$} \\
 \hline
\multirow{2}{*}{$t$ value} &\multicolumn{1}{c|}{NE} & \multicolumn{1}{c|}{LB} & \multicolumn{1}{c|}{KR} & \multicolumn{1}{c|}{SK}\\
& (s) & (s) & (s) & (s) \\
\hline
$1/160$  & 0.0032 & 0.0018 & 0.0007 & 0.025 \\  
\hline
$1/80$   & 0.0047 & 0.0021 & 0.0008 & 0.075 \\  
\hline
$1/40$   & 0.0071 & 0.0039 & 0.0013 & 0.140 \\  
\hline
$1/20$   & 0.0074 & 0.0042 & 0.0025 & 0.939 \\ 
\hline
\end{tabular}
\caption{Computational time (sec) in balancing  $\exp(-magic(50)\cdot t)$ under tolerance $10^{-5}$.}
\label{tab:ex1_01}
\end{table}

%
 \end{itemize}
\begin{figure}
 \includegraphics[width=0.32\textwidth]{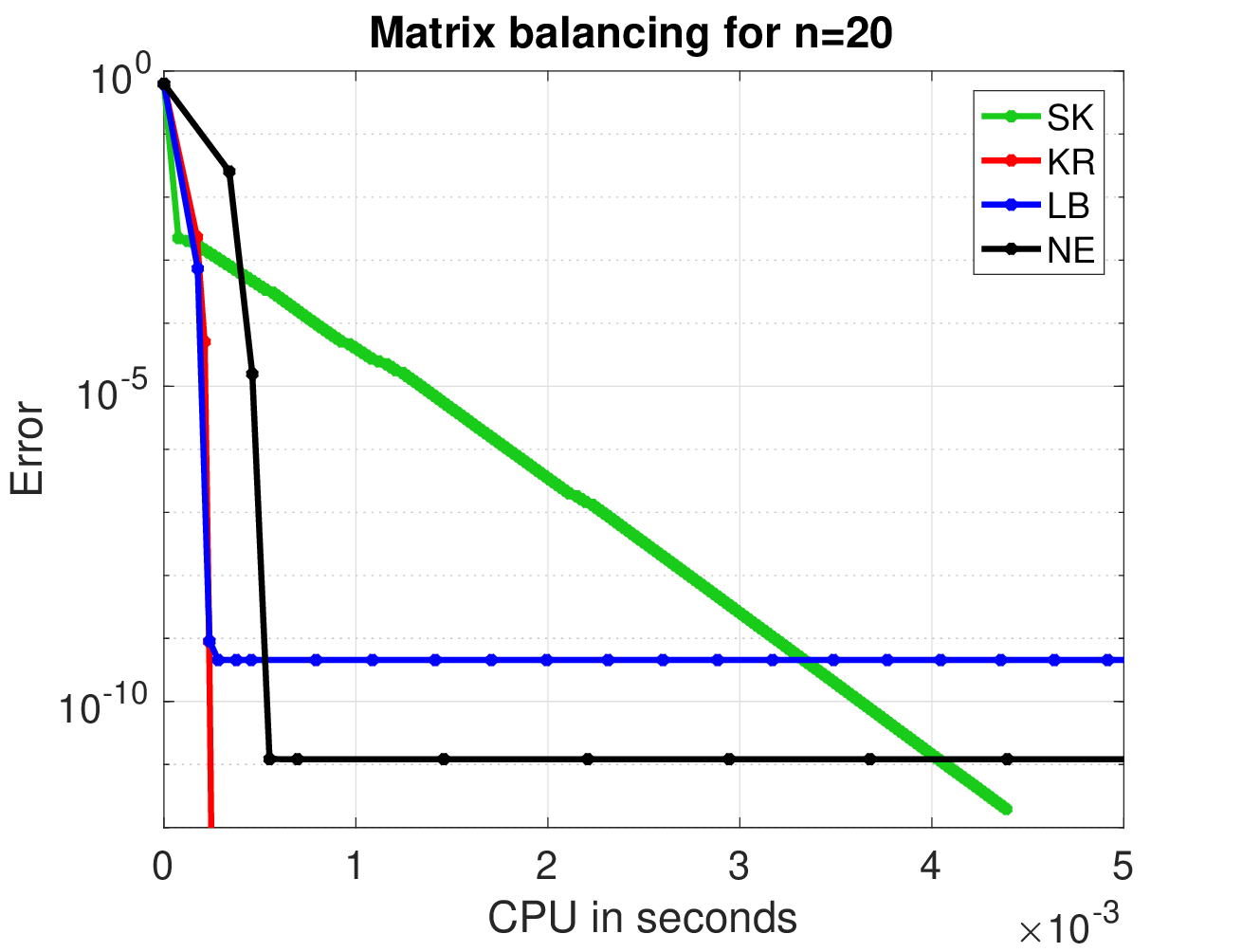}
 \includegraphics[width=0.32\textwidth]{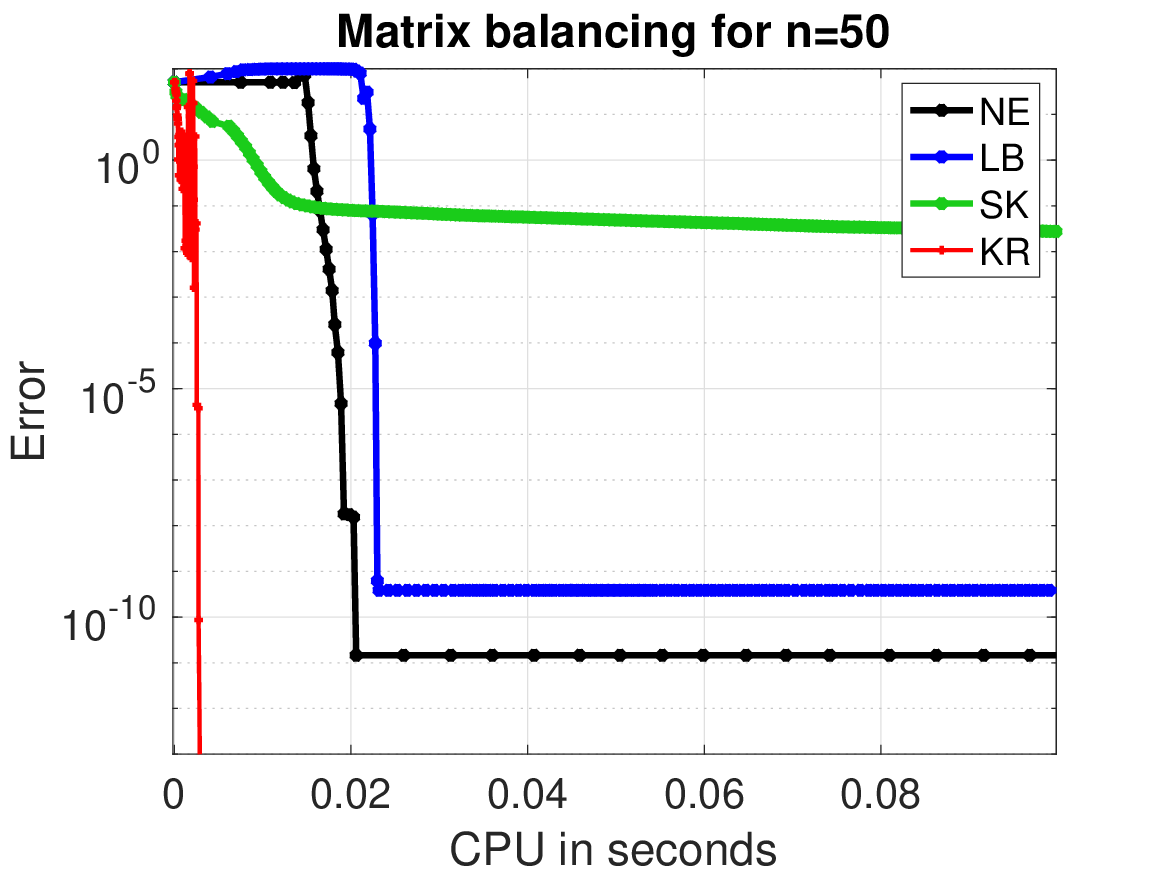}
    \includegraphics[width=0.32\textwidth]{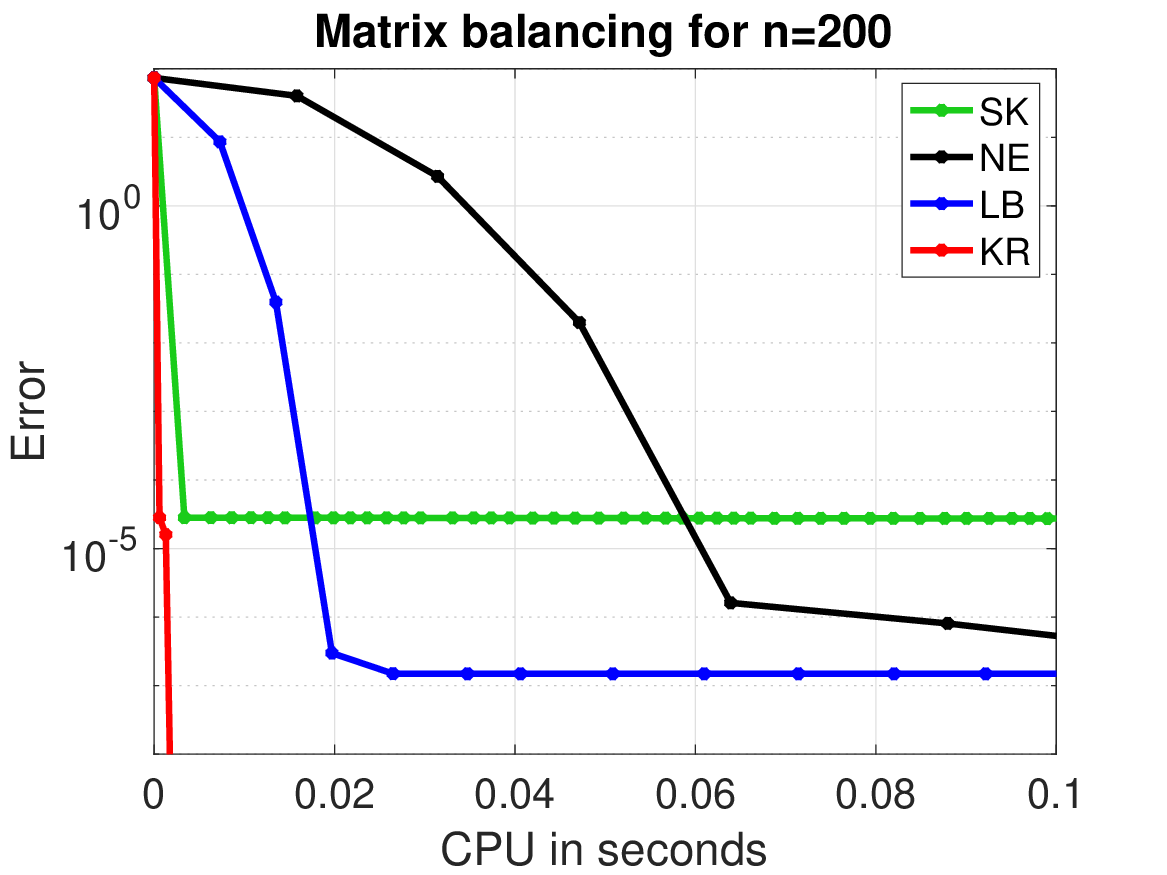}
 \caption{ 
 Comparison of SK with other Newton based matrix balancings, $n=20$(left), $n=50$(middle), and $n=200$ (right). The performance metric  is (\ref{Error}). }  \label{figure3} 
 \end{figure}

 \begin{figure}
\begin{center}
   \includegraphics[width=0.3\textwidth]{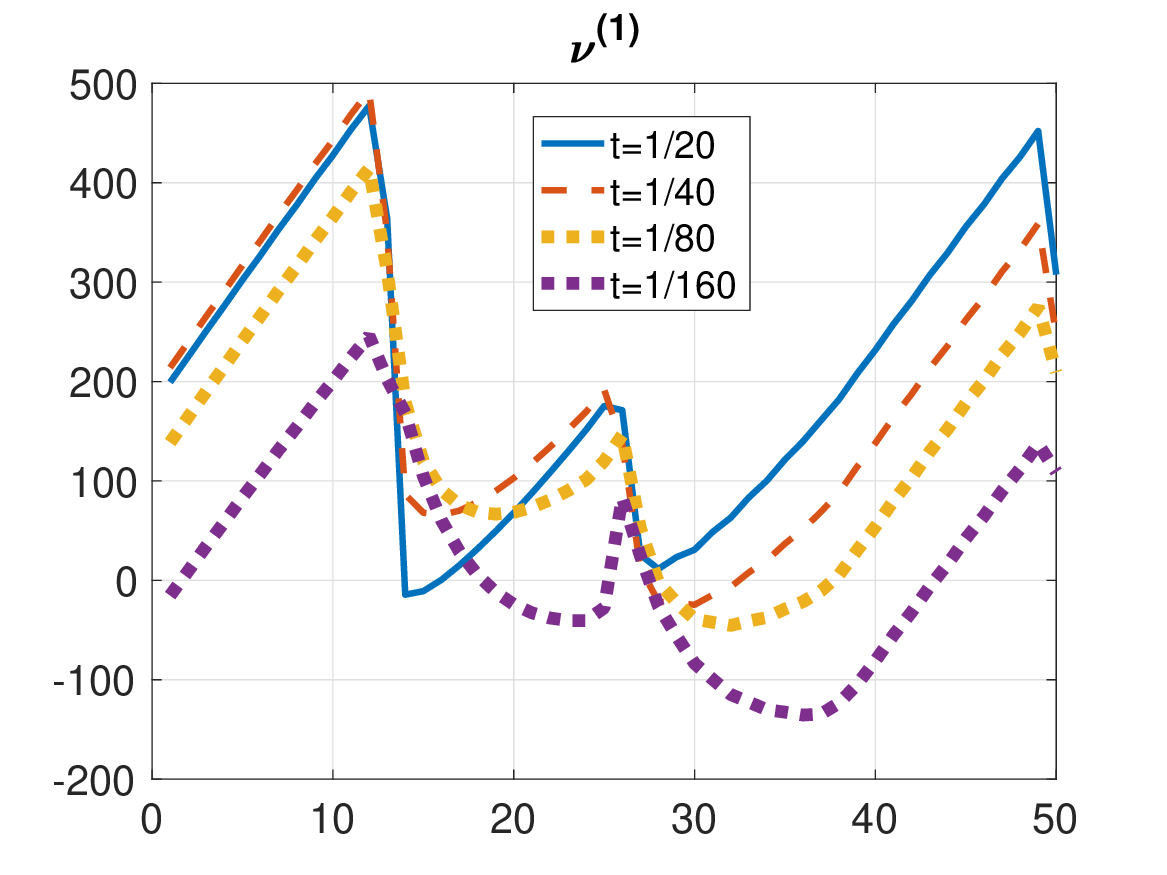} 
   \includegraphics[width=0.3\textwidth]{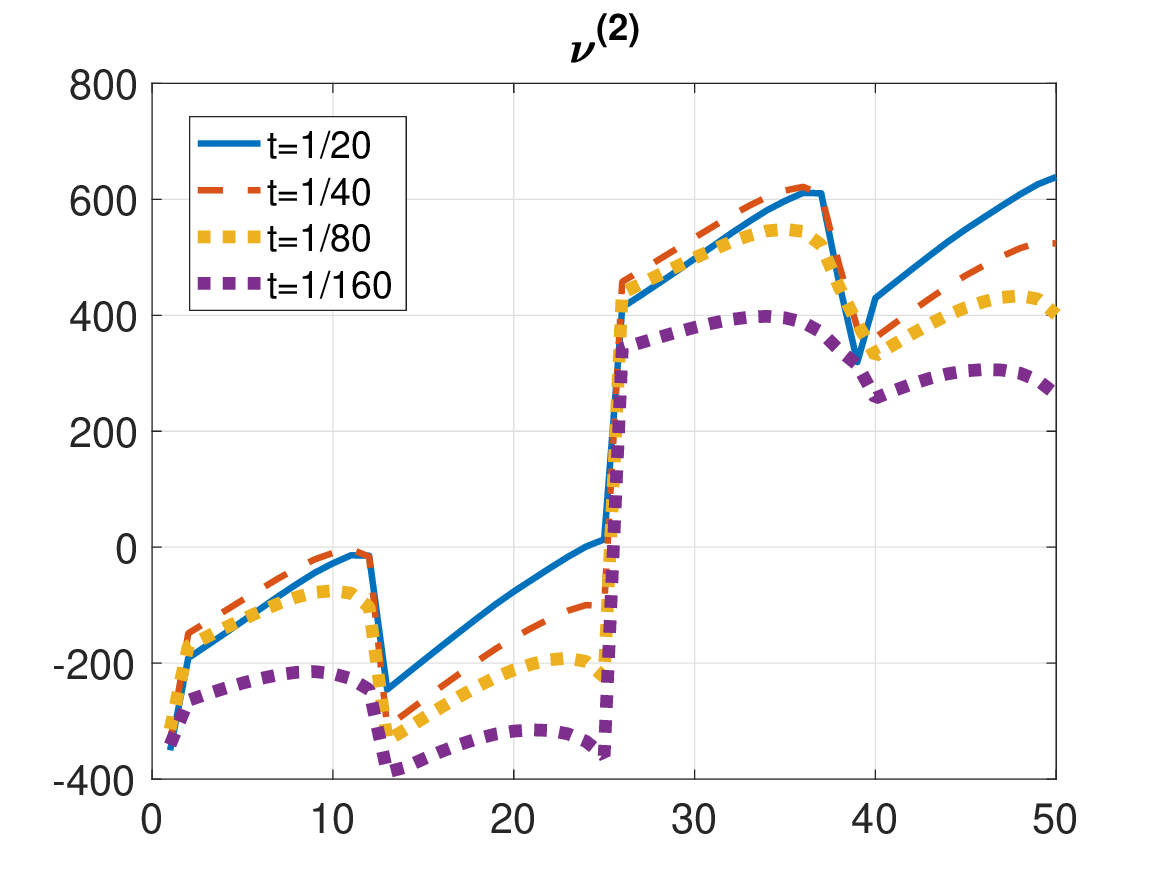} \\
   \includegraphics[width=0.3\textwidth]{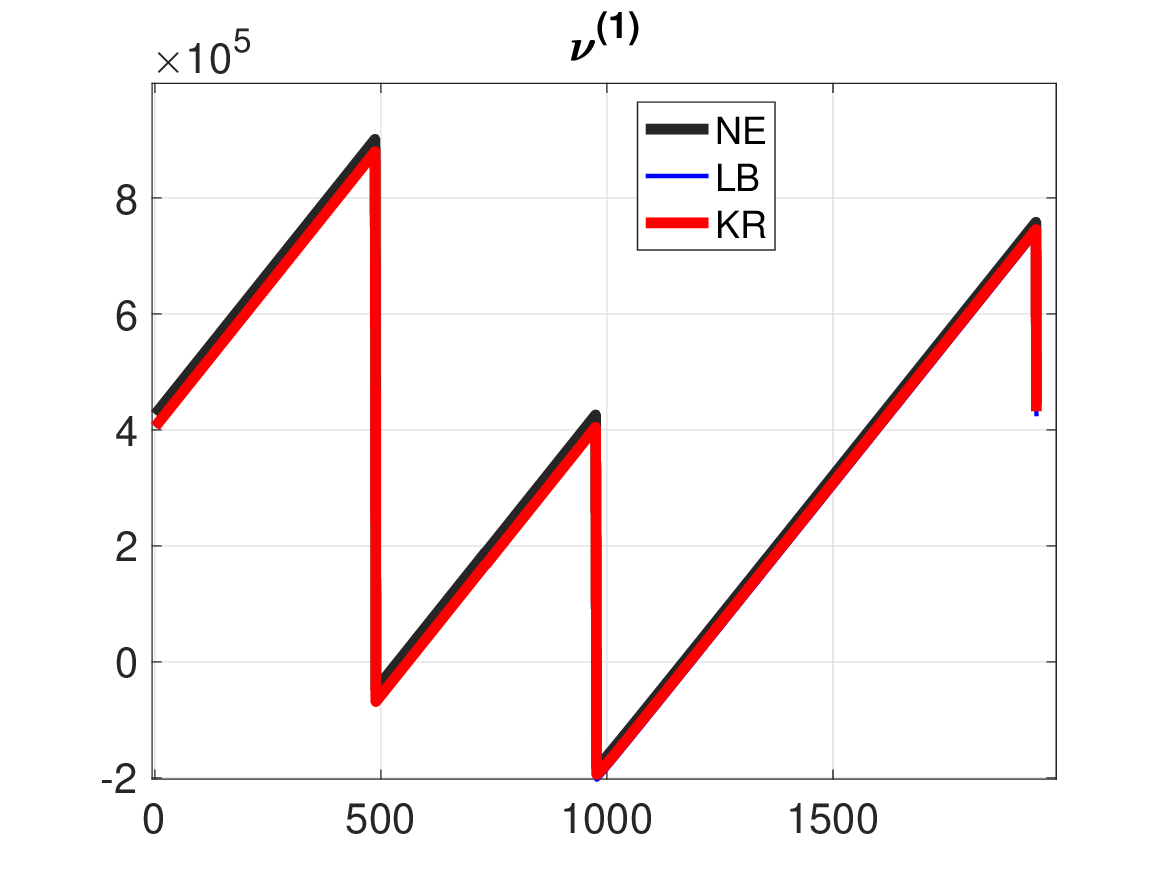} 
   \includegraphics[width=0.3\textwidth]{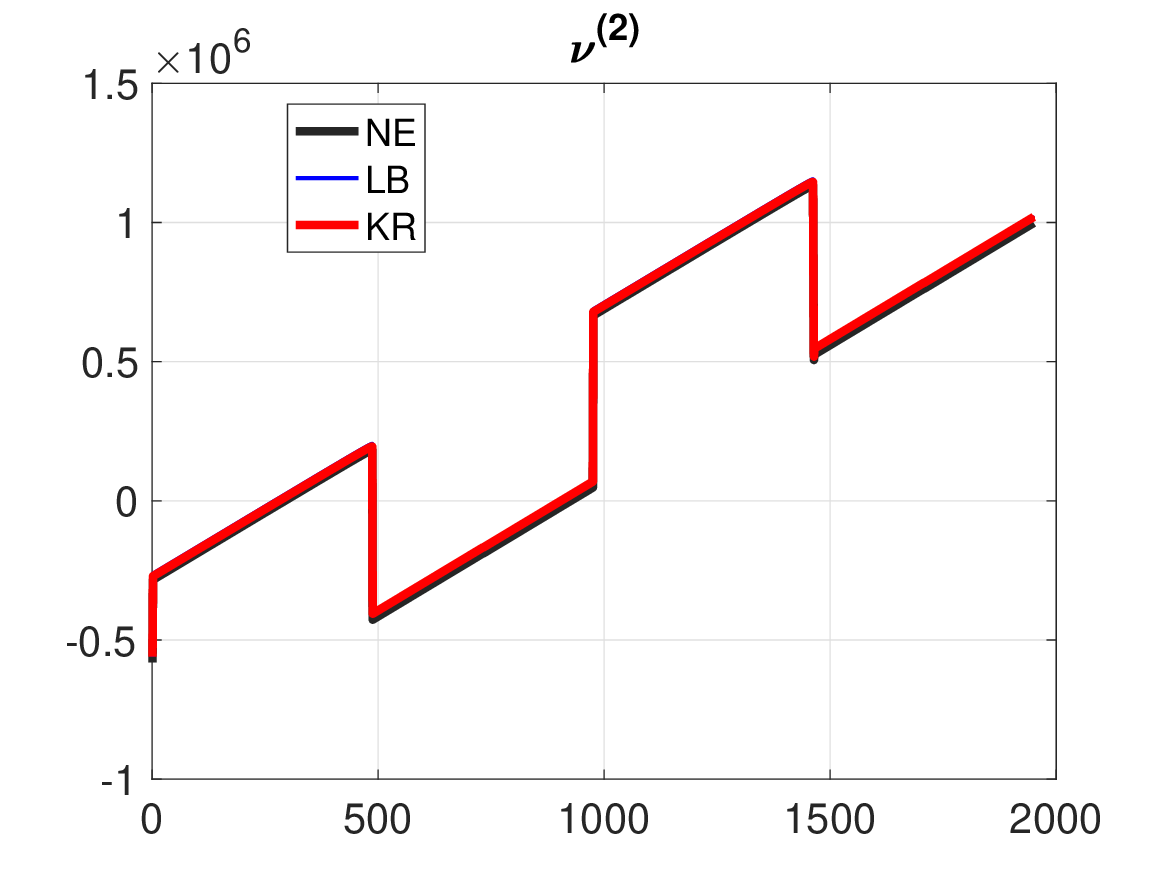} 
   \end{center}
 \caption{ Top:  Vectors $\nu^{(1)}$ and $\nu^{(2)}$ of (\ref{eq_rc}) in balancing $\exp(-magic(50)\cdot t)$.
 Bottom:  Vectors $\nu^{(1)}$ and $\nu^{(2)}$  in optimal transport with $\IT(c)=magic(1950)$.}  \label{nu12} 
 \end{figure}

\subsubsection{Comparison in solving optimal transport}
We demonstrate  the application of matrix balancing algorithms in 
solving optimal transport along a central path for $t=t_k=t_0 \eta^k$, $k=0,1,2,\ldots, t_{\max}$. 
We evaluate matrix balancing  algorithms in handling the  cases with assignment matrix
\begin{itemize}
\item  $\IT(c)= magic(2500)$;
\item  $\IT(c)= magic(1950)$;
\item  $\IT(c)$ has $n^2$ entries $\{c_{j,k}=\|y_j-z_k\|^2: i,j=1,\ldots, n\}$, where $\{y_k\}_{k=1}^{n}$ is one TLC point-set and  and $\{z_k\}_{k=1}^{n}$ is one translated FRC point-set, $n=254$.
\end{itemize}
Perform matrix balancing of a sequence of  matrices  for a few positive values $t=t_k$ until the relative duality gap $\epsilon$ is met, i.e, 
\beqq
\{
\exp(- t_i \IT(c)):  t_1< t_2<\ldots<t_{\max}
\}.
\eeqq Let $x_{opt}$ be an optimal primal vector. 
For  each algorithm,
we report the  computation time, 
when \textbf{ relative duality gap} falls within 
 a given tolerance level  $\epsilon$,
\beqq
\langle c, x_{opt}\rangle^{-1} (\langle c, x\rangle-\langle \nu, \1_{2n}\rangle )\le \epsilon.
\eeqq

Table~\ref{tab:ex1_0}-\ref{tab:ex1_1}
 report the CPU time and the corresponding $t_k$ value for  various tolerance level $\epsilon$.
 Notice that  when    identical sequences of $t_k$ are reported,  identical sequence of matrices are balanced in these methods. 
Consider a fixed matrix balancing tolerance $\epsilon_{MB}=10^{-5} \sqrt{n}$ as  the  stopping criterion. 
This criterion ensures that  the gradient has small norm, 
$\|\nabla \mbf(\nu)\|\le \epsilon_{MB}$, see   (\ref{gradf}).
 Experiment results show that all Newton methods work quite well in the three problems. In particular,  KR consistently gives the fastest convergence among these Newton methods. 
 However, a winner between Newton methods and SK
 usually  depends on the  difficulty of the problem  itself. Observe the pattern similarity between $magic(200)$ and $magic(2500)$ and observe 
 the pattern similarity between $magic(50)$ and $magic(1950)$ from  Fig.~\ref{magic}.
For   the problem $magic(2500)$, which  is relatively easy (compared with $magic(1950)$),
SK is a fast algorithm, which  produces acceptable results, much  faster than NE and LB as shown in $magic(200)$. On the other hand,  facing the challenging problem $magic(1950)$,   SK fails to produce acceptable results within $5000$ seconds.  
 As a result, we can see the similarity of the dual vectors $\nu^{(1)},\nu^{(2)}$  in  Fig.~\ref{nu12}.
As  in  $magic(50)$ and $magic(1950)$,  entries of  dual vectors in a point-set matching problem actually  vary a lot. From this viewpoint,  it is not so surprising that SK   has the worst convergence in solving the point set  matching problem, shown in  Table~\ref{tab:ex1_1}.

\begin{table}[h]
    \centering
\begin{tabular}{|c|c|c|c|c|c|c|c|c|}
\hline
 \multicolumn{9}{|c|}{ $magic(2500)$} \\
 \hline
\multirow{2}{*}{$\epsilon$} &\multicolumn{2}{c|}{NE} & \multicolumn{2}{c|}{LB} & \multicolumn{2}{c|}{KR} & \multicolumn{2}{c|}{SK}\\
\cline{2-9}
& time & $t_k$ & time & $t_k$ & time & $t_k$ & time & $t_k$\\
\hline
$1e-1$ & 22.35 & $291.9$ & 43.73 & 291.9      & 13.77   & $291.9$     & 2.99 & $291.9$ \\
\hline
$1e-2$ & 32.10 & $3325$ & 62.21  & $3325$     & 19.15 & $3325$      & 4.42  & $3325$ \\
\hline
$1e-3$ & 45.96 & $37876$ & 80.46 & $37876$    & 24.56 & $37876$     & 6.15  & $37876$ \\
\hline
$1e-4$ & 51.55 & $287627$ & 94.45 &  $287627$    & 28.73    & $287627$    & 7.00  & $287627$ \\
\hline
$1e-5$ & 116.87 & $2675044$ & 108.32 & $4914369$  & 34.00 & $3276247$   & 8.17  & $3276247$ \\
\hline
\end{tabular}
\caption{Computational time(sec) in solving optimal transport with $\IT(c)=magic(2500)$.}
\label{tab:ex1_0}
\end{table}

\begin{table}[h]
    \centering
\begin{tabular}{|c|c|c|c|c|c|c|c|c|}
\hline
 \multicolumn{9}{|c|}{ $magic(1950)$} \\
 \hline
\multirow{2}{*}{$\epsilon$} &\multicolumn{2}{c|}{NE} & \multicolumn{2}{c|}{LB} & \multicolumn{2}{c|}{KR} & \multicolumn{2}{c|}{SK}\\
\cline{2-9}
& time & $t_k$ & time & $t_k$ & time & $t_k$ & time & $t_k$\\
\hline
$1e-1$ & 25.29  & 437.9  & 66.42  & 437.9      & 15.30   & 437.9    & $11197$ & $388.2$ \\
\hline
$1e-2$ & 34.36  & $3325$   & 96.84  & 3325    & 19.89   & 3325      & 11626 & $4.42\times 10^3$ \\
\hline
$1e-3$ & 46.63  & $37877$  & 133.38 & 37877    & 25.71   & 37877     & -  & - \\
\hline
$1e-4$ & 65.09  & 287627 & 190.35 &  287627 & 41.22   & 287627    & -  & - \\
\hline
$1e-5$ & 129.21 & 2184164 & 2796 & 1531812  & 389.12 & 2184164   & -  & - \\
\hline
\end{tabular}
\caption{Computational time(sec) in solving optimal transport with $\IT(c)=magic(1950)$.}
\end{table}

\begin{table}
    \centering
\begin{tabular}{|c|c|c|c|c|c|c|c|c|}
\hline
 \multicolumn{9}{|c|}{Lung branch points 
 $(n=254)$} \\
 \hline
\multirow{2}{*}{$\epsilon$} &\multicolumn{2}{c|}{NE} & \multicolumn{2}{c|}{LB} & \multicolumn{2}{c|}{KR} & \multicolumn{2}{c|}{SK}\\
\cline{2-9}
& time & $t_k$ & time & $t_k$ & time & $t_k$ & time & $t_k$\\
\hline
$1e-1$ & 0.14 & $86.5$ & 0.32 & 86.5 & 0.11 & $86.5$  & 4.94  & $70.6$ \\
\hline
$1e-2$ & 0.22 & $4.37\times 10^{2}$ & 0.40 & $4.37\times 10^{2}$  & 0.13 & $4.37\times 10^{2}$  & $25.26$  & 509.8 \\
\hline
$1e-3$ & 0.39 & $1.478\times 10^{3}$ & 0.64 & $1.478\times 10^{3}$  & 0.28 & $1.478\times 10^{3}$ & $200.46$ &  $1.348 \times 10^{3}$ \\
\hline
$1e-4$ & 1.10 & $4.988\times 10^{3}$ & 5.19 & $4.988\times 10^{3}$  & 0.66 & $4.988\times 10^{3}$ &  $851.94$  & $5.247 \times 10^{3}$ \\
\hline
$1e-5$ & 1.25 & $2.5251\times 10^{4}$ & 5.89 & $2.5251\times 10^{4}$  & 0.80 & $2.5251\times 10^{4}$ &  $859.51$ & $2.1621\times 10^{4}$ \\
\hline
\end{tabular}
\caption{Computational time(sec) in solving optimal transport with $c=$ $L^2$-distance assignment. }
\label{tab:ex1_1}
\end{table}
%

%
%
%
\subsection{ Rigid-motion estimation}\label{sec_4.3}
One big advantage of SNNE over primal-dual methods is that SNNE updates multiplier vectors solely along the increase of $t$, i.e., no need to store/pass  $x$ between sub-problems. The memory requirement in SNNE can be much less than that in primal-dual methods, if the active support set  is  properly handled in  large-scale problems. 
  The following two experiments demonstrate the effectiveness  of   SNNE  in handling large-scale problems. 
In the first  study, we provide one comparison between SNNE, SNNE-sparse with  primal-dual methods, which are popularly used in solving linear programming. Here, we consider  two primal-dual methods: Mehrotra predictor-corrector  method, which is one widely-used primal-dual interior point method\cite{doi:10.1137/0802028}, and
 one commercial software solver, Gurobi, where the algorithm method is chosen to be  the barrier method. 
  In the first study, we  actually solve 
a number of optimal transport problems.
 For the second study, we demonstrate the flexibility of the entropic regularization.
 We   apply  entropic regularization, but  take $t$ as the outer loop variable to 
bypass the multiple optimal transport problems. The algorithms SNNE-t and SNNE-sparse are developed in this framework to optimize the computational time. 
 
We present a rigid motion experiment on a three-dimensional teapot   point cloud consisting of $41472$ points. 
 We subsample $1000/2500/5000$ point-sets $\{y_1,\ldots, y_n\}$  from  the teapot point cloud.  Select
one orthogonal matrix  $Q\in \IR^{3\times 3}$,
 and generate another set of  point-sets, $\{z_i=Qy_i: i=1,\ldots, n\}$,
 as shown  in Figure~\ref{figureRigid}.
 For simplicity, 
$\{y_i\}$  is  shifted  so that  $\sum_{i=1}^n y_i=0$.
Introducing a user-defined parameter $\eta>0$,  we estimate $Q\in \IR^{3\times 3}$ and $\IT(x)\in \Pi_n$ from the minimization, 
\beqq\label{Qx0}
\min_{Q}\min_x \left\{ \IF(Q, x):=\langle 
\mbc(Q),  x\rangle+ \eta \|Q-I_3\|^2_F\right\}, \eeqq
where the assignment $\mbc(Q)$ is a function of $Q$  with  $
 \IT(\mbc(Q))_{i,j}=\| y_i-Q z_j\|^2$. 
We can apply optimal transport for general non-rigid motion problems  via
introducing   regulation terms for splines. For instance,    see \cite{RefWorks:ChuiCVPR, Glaunes, JMIV}.

Here is one  \textit{ naive } algorithm, consisting of repeating the estimations of $Q$ and $x$:
\begin{itemize}
\item Fix $Q$. Estimate $x$ with $\IT(x)\in \Pi_n$, which is one optimal transport. 
\item Fix $x$. Solve  $Q$ from   the least squares problem,
 \beqq
\min_Q \{\IF(Q, x)=\sum_{i,j=1}^n\langle (y_i-Q z_j), x_{i,j}( y_i-Qz_j)\rangle+\eta \langle Q-I, Q-I \rangle\}.
\eeqq From the SVD property,  an optimal matrix  is $Q=UV^\top$, where  $U,V$ are unitary matrices in  the SVD,
\beqq\label{SVDq} UDV^\top=\sum_{i,j=1}^n\{x_{i,j} y_i z_j^\top +\eta I\}.\eeqq 
\end{itemize}   The performance metric is given by 
   \beqq\label{err}
   error:=\langle \mbc(Q),x\rangle\ge 0.
   \eeqq
   Note that $error=0$ if and only if $y_i=Qz_j$ holds for all $x_{i,j}>0$. 

For a fair  comparison,  we use  SNNE, Mehrotra  primal-dual method(PD) and Gurobi solver to solve optimal transport minimizer  $\IT(x)$ after each $Q$-update. 
Table~\ref{default} reports the computational time of  SNNE, Mehrotra  primal-dual method(PD) and Gurobi optimization software. We stop algorithms when $error$ reaches $10^{-4}$.
Figure~\ref{figureRigid2} shows the desired small error under  PD, Gurobi and SNNE, which indicates the successful  reconstruction of
 $x$ and $Q$ in  the cases  $n=2500$ and $n=5000$.  As  expected, when $n$ increases, the computational time increases accordingly. The computational time of PD is approximately proportional to $n^3$, while the computational time of SNNE or Gurobi is approximately proportional to $n^2$.
Clearly,  either Gurobi optimization software or Mehrotra predictor-corrector  method can deliver  an optimal solution of optimal transport in (\ref{P0}) very fast, when the  cardinality $n$ does not exceed  $1000$. 
However,  due to its advantage in low memory requirement, the inferior performance of SNNE becomes less apparent  in the case  $n=2500$ and $n=5000$.  See Figure~\ref{figureRigid2}.

\subsubsection{SNNE-t and SNNE-sparse}
 In SNNE,       after each  $Q$-update,  a sequence of matrices are balanced   to generate  one approximate  optimal transport minimizer for each assignment matrix $\mbc(Q)$.  Balancing these matrices   along multiple paths actually  makes SNNE  very  inefficient. 
  To alleviate the difficulty, we   introduce   the entropic regularization to (\ref{Qx0}) to  estimate $(Q,x)$ along  ``one"  inexact minimizer path  associated with a sequence  $\{t=t_1,\ldots, t=t_{\max}\}$,
\beqq\label{Qx}
\min_{Q}\min_{\Sigma}\min_x \left\{ \IF_t(Q, x, \Sigma):=\langle 
\mbc(Q),  x\rangle-t^{-1} \langle \IT^{-1}(\1_\Sigma), \log x\rangle+ \eta \|Q-I_3\|^2_F\right\}. \eeqq

At each $t$, we execute the following block coordinate steps  to approximate the  minimizer $(Q,x)$. \begin{itemize}
\item Fixing $Q$, use matrix balancing to  compute  an optimal $\IT(x)$, i.e., find $\nu$ to balance the matrix   $\exp(-t (c(Q)-M^\top \nu))$. Use $\nu$ to update $\Sigma$.
\item 
Fixing $x$, we update $Q$ by SVD computation in (\ref{SVDq}).
\end{itemize}
The convergence to  the exact minimizer $(Q,x)$ requires a sufficient number of  these block coordinate descent steps.  (See Prop. 2.7.1~\cite{Np}.)
As $t$ gets sufficiently large, $(Q,x)$ in (\ref{Qx}) is expected to  approach  one minimizer in (\ref{Qx0}).  We call  the new algorithm solving (\ref{Qx}) along one $t$-path as  \textbf{SNNE-t}. 
Note that the major difference from (\ref{Qx0})  is that
  the  parameter $t$  in (\ref{Qx})  is  an outer loop variable.
 Results are reported in  Table~\ref{default}.    Thanks to bypassing   multiple optimal transport problems, 
  SNNE-t actually   consumes much less computation time than  previous algorithms.

%
    
Next, we 
implement   SNNE-sparse to solve (\ref{Qx}), where the support of $x$ is 
 dynamically updated  reduce the memory load of SNNE-t. That is, 
   $x$ and $Q$ are updated alternately with initialization   
  $Q=I$. For each $Q$ fixed,  we compute $x$  via 
  one approximate multiplier vector $\nu_\xi$ subject to the approximate support set   $\Sigma_\xi$ for $\xi$ in $\{1,2,3,\ldots, \xi_{\max}\}$,   as in  Alg.~\ref{SNNEalg}. 
To have  a better  control on sparsity of $\Sigma_\xi$ in SNNE, we  select a sparse parameter  $ k=20$ to ensure  an upper bound $(2k+1)n$ for the  cardinality of $\Sigma_\xi$. 
The result of SNNE-sparse is reported in Table~\ref{default}  and Fig.~\ref{figureRigid2}.   Clearly, the introduction of  matrix sparsity  together with the usage of one $t$-path greatly  reduces the computational time of the implementation of  SNNE-sparse. Here,  $\xi_{\max}=3$ is used. 
The heuristic choice of $\xi_{\max}$  has a big influence on  the whole computational time. When $\xi_{\max}=2$ is used, the computation time can be further reduced. See the column of SNNE-sparse-2 in Table~\ref{default}.      \begin{figure}   
  \includegraphics[width=0.3\textwidth]{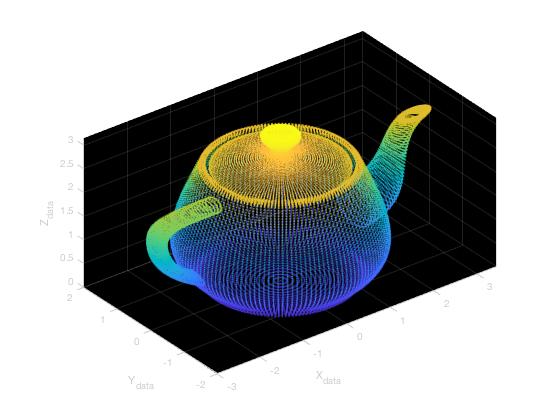} 
   \includegraphics[width=0.3\textwidth]{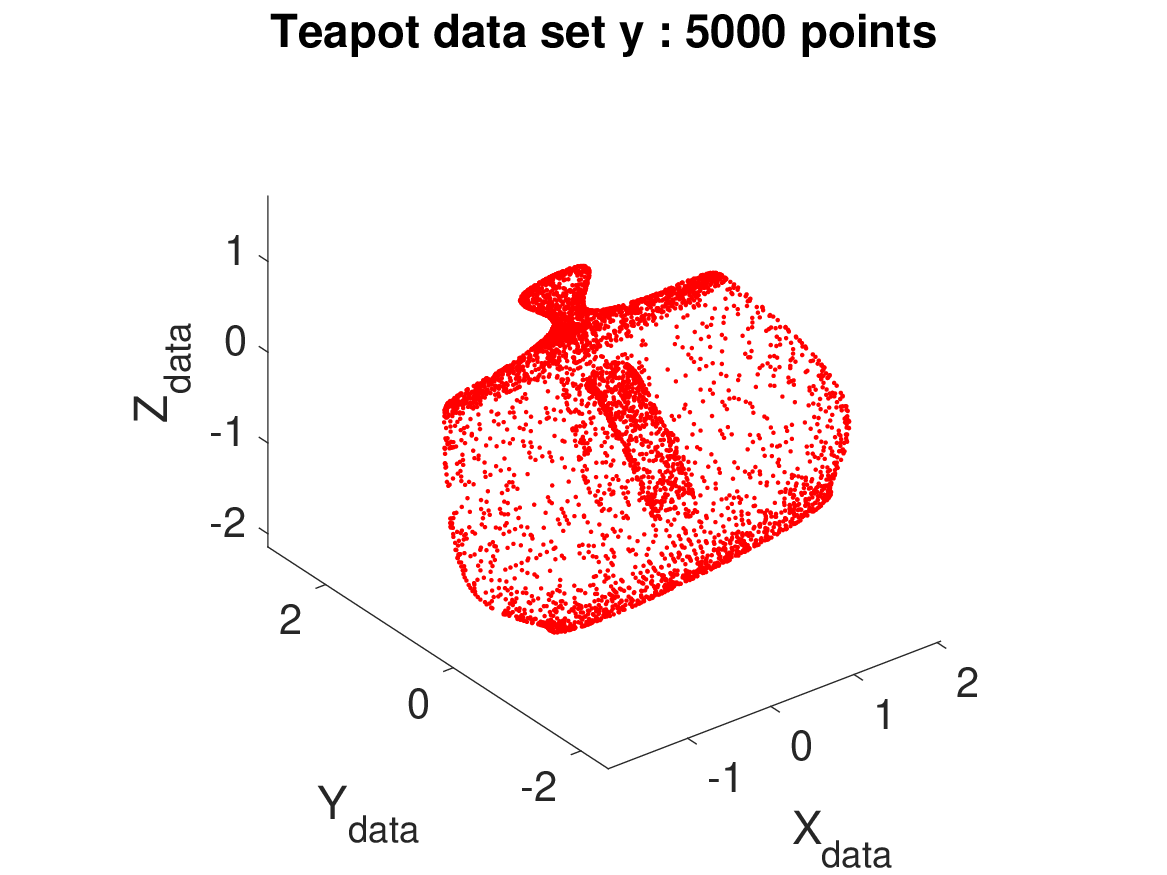} 
 \includegraphics[width=0.3\textwidth]{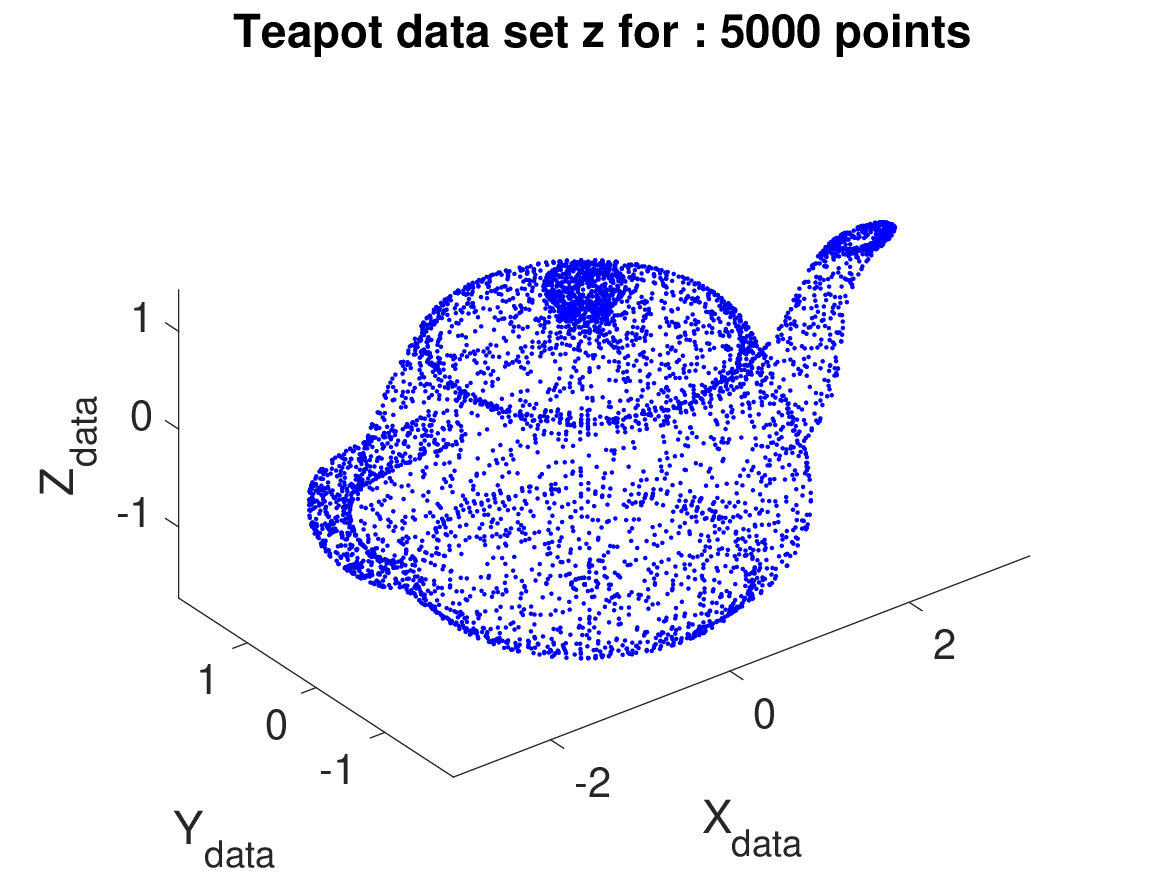}
 \caption{Left: the teapot point set(41472 points). Middle: the point set $\{y_i: i=1,\ldots, 5000\}$. Right: the point set $\{z_i: i=1,\ldots, 5000\}$. }  \label{figureRigid} 
 \end{figure}
%

    \begin{figure}
   \begin{center}
 \includegraphics[scale=0.3]{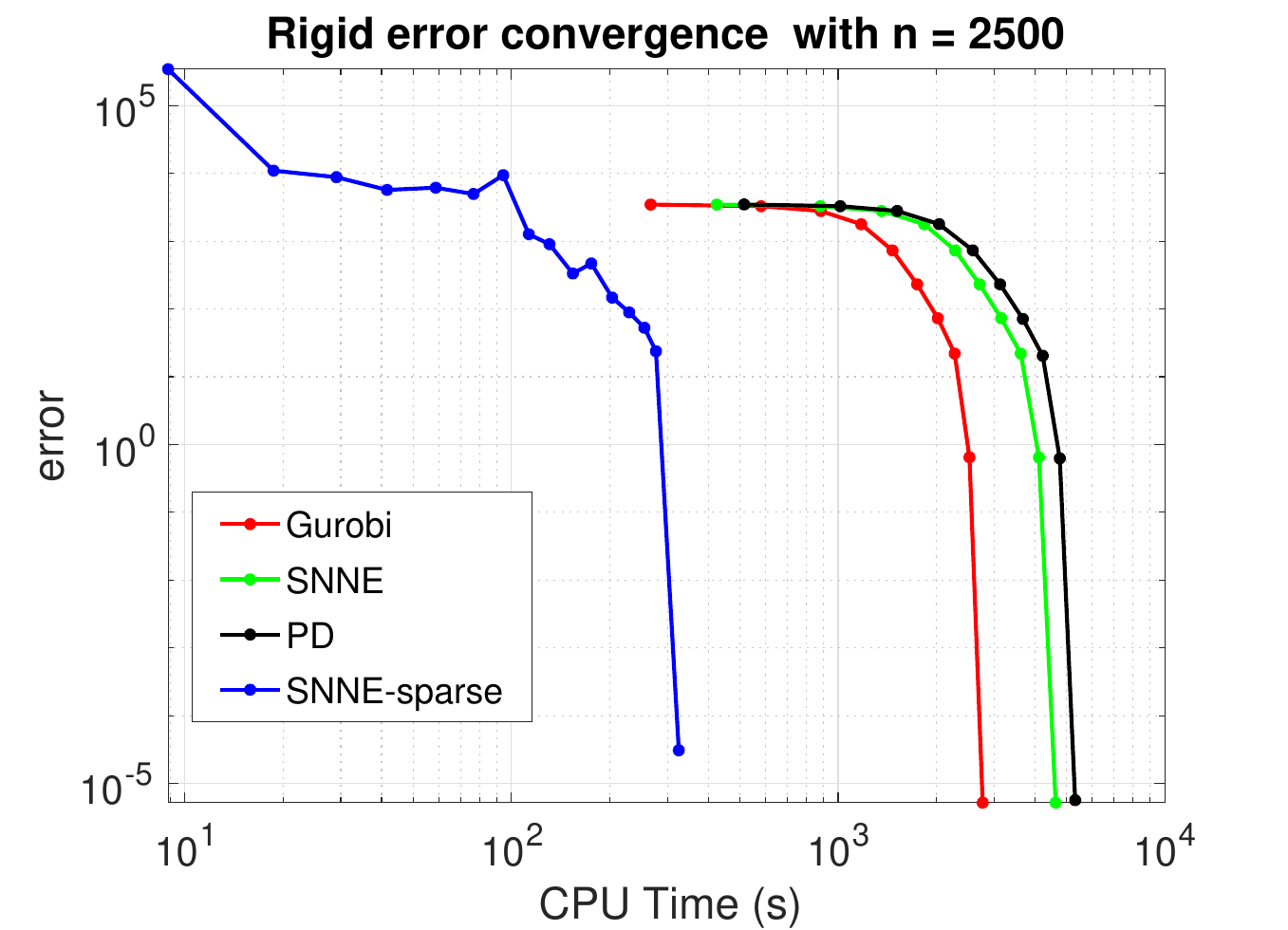}
 \includegraphics[scale=0.3]{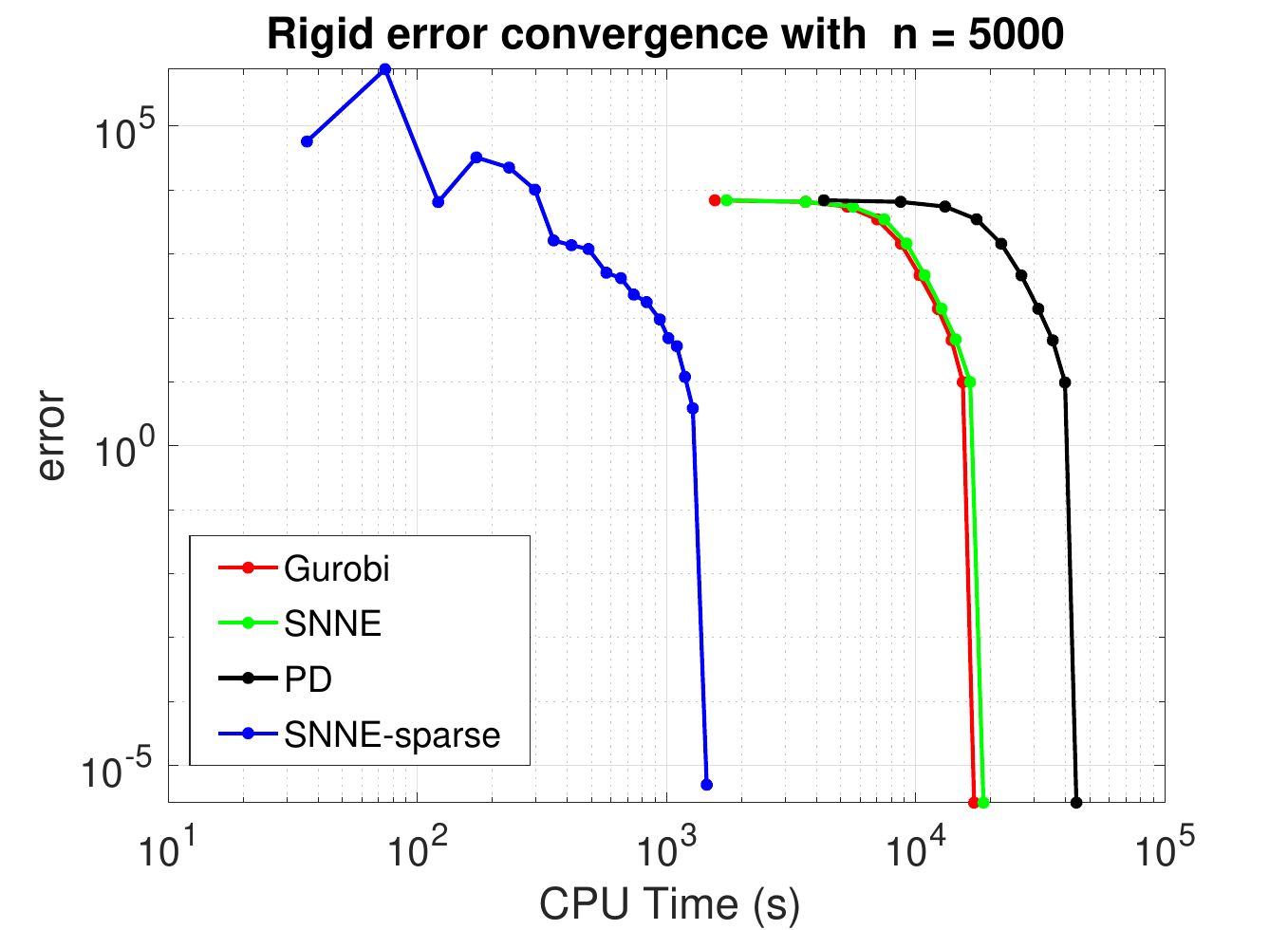}
 \end{center}
 \caption{ Computational time(sec) vs. error metric in (\ref{err}) under SNNE methods and Primal-dual methods.  }  \label{figureRigid2} 
 \end{figure}
 
%
%
%
%
%

 \begin{rem}[Multi-scale similarity]
Actually the multiplier vectors corresponding to  different cardinality $n$ resemble each other. 
The dual vector associated with coarser sampling  can be used as  one warm start to compute the dual vector associated with finer sampling. 
For instance, 
consider the application of SNNE on  the  problem with $n=2500$ and $n=5000$, respectively,
 \[
 \min_{\IT(x)\in \Pi_n}\langle c,x\rangle, \textrm{ with 
$ \IT(c)_{i,j}=\|y_i-z_j\|^2$, $i,j=1,\ldots, n$ }.
 \]
For the case $n=2500$, let $Y'=\{y_1,\ldots, y_{2500}\}$
 and $Z'=\{z_1,\ldots, z_{2500}\}$. Let $[{\nu^{(1)}}',{\nu^{(2)}}']$ be the multiplier vector in (\ref{eq_29'}).
For the case $n=5000$, let $Y=\{y_1,\ldots, y_{5000}\}$
 and $Z=\{z_1,\ldots, z_{5000}\}$. Let $[{\nu^{(1)}},{\nu^{(2)}}]$ be the multiplier vector in (\ref{eq_29'}).
The color distribution  in the top figures showing $({\nu^{(1)}}', {\nu^{(2)}}')$ resembles the color distribution in the bottom figures  showing $({\nu^{(1)}}, {\nu^{(2)}})$. Indeed,  $\nu^{(1)}\approx {\nu^{(1)}}'+160$ and $\nu^{(2)}\approx {\nu^{(2)}}'-160$. (Here the shift  is caused by  the one-dimension null space of $M$.)
Hence, we can employ $({\nu^{(1)}}', {\nu^{(2)}}')$  to produce a warm start $(\nu^{(1)}_{ini},\nu^{(2)}_{ini})$ (satisfying KKT conditions in (\ref{KKT})) to initialize $\nu$ (which initializes $\Sigma$) in the problem with $n=5000$. 
That is, 
\begin{itemize}
 \item let $\nu_{ini}^{(1)}$ be computed as follows: for $j=1,\ldots, 5000$
  \beqq
  \nu_{ini}^{(1)}(j)=\max_k \{ \|y_j-z_k\|^2-{\nu^{(2)}}' (k): z_k\in Z'\}.
  \eeqq
  \item Let $\nu_{ini}^{(2)}$ be computed as follows: for $k=1,\ldots, 5000$
  \beqq
  \nu_{ini}^{(2)}(k)=\max_j \{\|y_j-z_k\|^2-{\nu}_{ini}^{(1)}(j): y_j\in Y\}.
  \eeqq
\end{itemize}
 \end{rem}
    \begin{figure}   
   \includegraphics[width=0.4\textwidth]{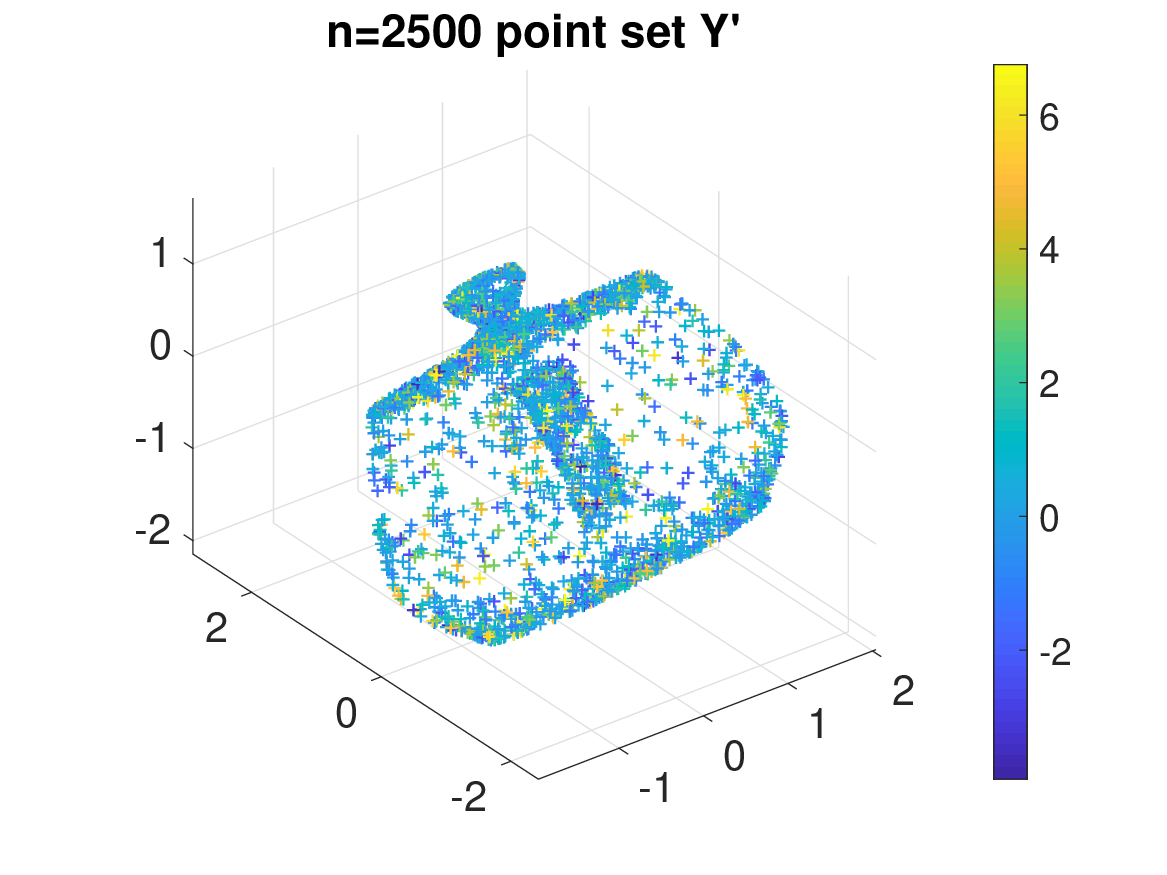} 
   \includegraphics[width=0.4\textwidth]{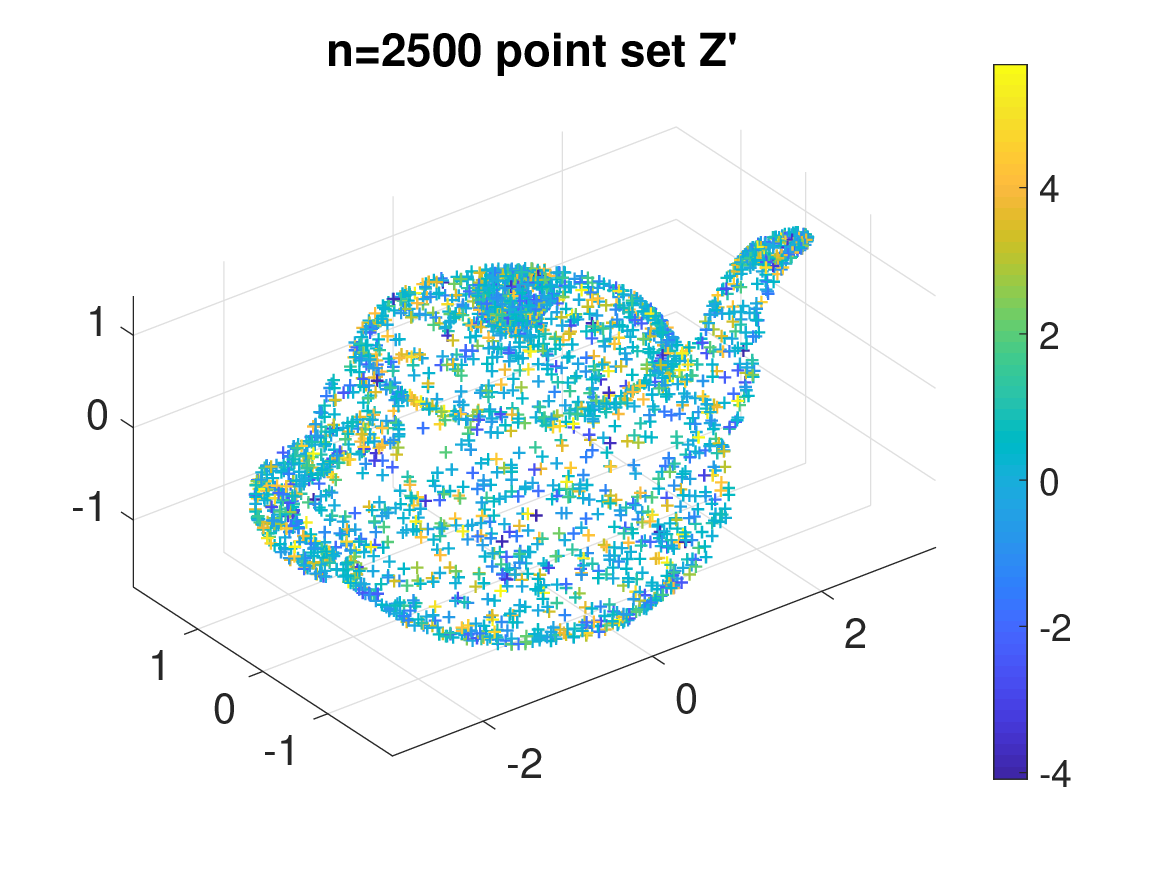} \\
        \includegraphics[width=0.4\textwidth]{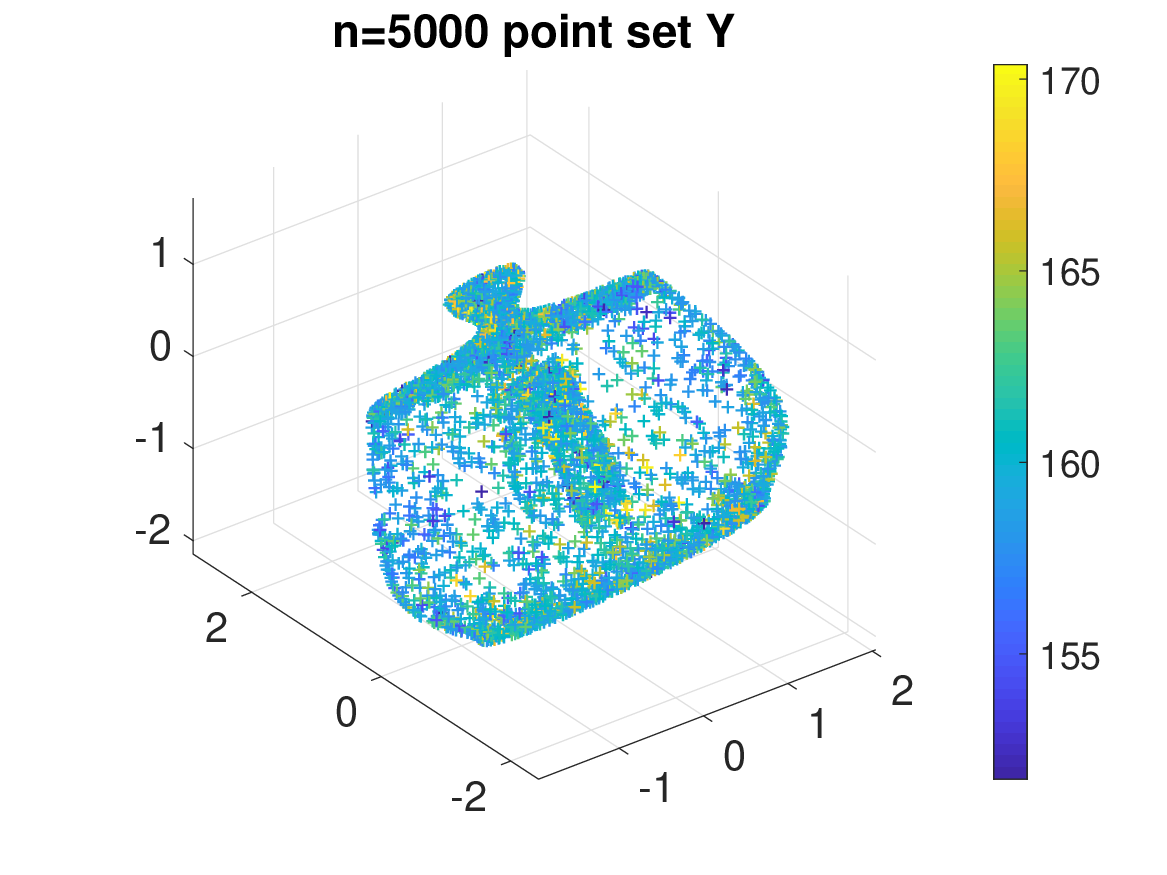} 
   \includegraphics[width=0.4\textwidth]{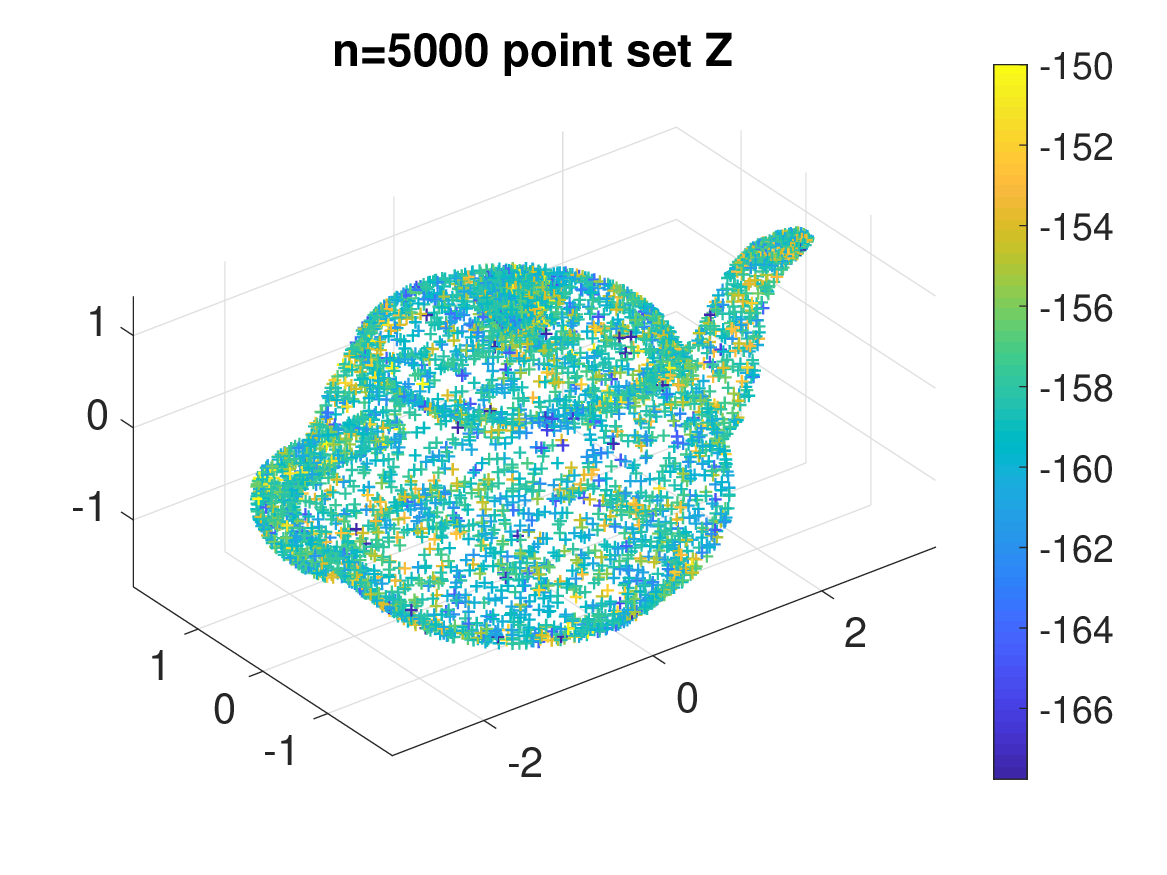}  
 \caption{Top: the point sets $Y', Z'$ with $n=2500$, respectively. 
 Bottom: the point sets $Y, Z$ with $n=5000$, respectively. The color on $Y,Y'$ illustrates  the values $\nu^{(1)}$ and ${\nu^{(1)}}'$.  The color on $Z,Z'$ illustrates  the values $\nu^{(2)}$ and ${\nu^{(2)}}'$. }  \label{Multiscale} 
 \end{figure}

 \begin{table}
\caption{Computational time(sec) based on   rigid error   reaching $10^{-4}$. }
\begin{center}

\begin{tabular}{|c|c|c|c||c|c|c|}\hline
n  &   Primal-dual & Gurobi-Barrier & SNNE & SNNE-t & SNNE-sparse & SNNE-sparse-2\\
\hline
250 &3.6 &  19.1& 17.3 & 1.3 & 2.2 & 1.0 \\
500 &40.6 &  77.3 & 134.3& 4.1 &  6.1 & 4.4\\
800 & 141.3 &236.2 & 380.7 & 11.8& 14.3 & 11.3\\
1000 & 280.0 &   375.5 & 605.3 & 24.3 &23.6 & 16.8\\
2500 & 5319 &  2834 & 4636 & 277.0 &325.3 & 106.3\\
5000 &   44230 & 17180 & 18740 & 1136 &1452 &          504.0\\
12500 & $>50000$ & MemoryError  & $>50000$  & 13959&  12001&   6502\\
\hline
\end{tabular}
\end{center}
\label{default}
\end{table}%
\subsubsection{Support sets without total support}
The following  provides one comparison between  the performance under  sparse support sets given by $\Sigma=\Sigma'\cup \Sigma''\cap \Sigma'''$ in (\ref{sigma1})  and   the performance under  sparse support sets given by $\Sigma=\Sigma''$ in  (\ref{Sigma_ep''}).  The purpose is to illustrate  the advantage of   sparse support sets with total support  over those  without total support.
As a reference, we also conduct the simulation with $\Sigma=\Pi_0$, i.e., the original complete index set as the support. 
 
 Consider the minimization in (\ref{Qx}) with $t=1$, $n=2500$. Fix $Q=I$. Use the NE method to compute $\nu$ and update $\Sigma$ according to the following five rules, including
 \begin{itemize}
 \item[(i)] Sparse  support set $\Sigma$ with $K=20$ in (\ref{sigma1});
 \item[(ii)] Sparse support set $\Sigma$ with $K=80$ in (\ref{sigma1});
 \item[(iii)]  Sparse support set $\Sigma=\Sigma''$ with $K=20$ in (\ref{Sigma_ep''});
 \item[(iv)]  Sparse support set $\Sigma=\Sigma''$ with $K=80$ in (\ref{Sigma_ep''};
  \item[(v)] The complete  set $\Pi_0:=\{(i,j): i=1,\ldots, n, j=1,\ldots, n\}$.
 \end{itemize}
   Repeat the above $(\nu, \Sigma)$-procedure $\xi_{\max}$ times to get  an approximate  minimizer $x$ for (\ref{Qx}).  Results are reported in Table~\ref{default1}-\ref{default3}.

    Table~\ref{default1} reports the matrix balancing error of $x$. 
    For (i),(ii) and (v), the support sets  have total support and  we can obtain accurate matrix balancing in these cases.  Since $t=1$ is used, the approximate solution $x$ is far from a permutation solution and we are not concerned with accurate objective values. Hence, it is not surprisingly to see some numerical gap in Table~\ref{default2}, when objective values
    in (i),(ii) and (v) are compared.  
     Indeed, as the size of support increases, more positive  terms  in $\langle \IT^{-1}(\1_\Sigma), \log x\rangle$ contribute to the increase of objective values.
      Lastly, Table~\ref{default3} reports the norm of the null vector $M^\top \nu$.   In these three cases, the norm of the corresponding dual vectors are of similar size $\sim 10^3$. 
  
     On the other hand,       since
 the set in (iii) or (iv) does not have total support,  we can not  get  accurate matrix balancing to produce acceptable objective values. 
High accurate matrix balancing here is  a very challenging task.
Due to lack of  total support, we also observe  the blow-up of the dual vector norm.  The norm is of size $\sim 10^5$.  See   (iii) and (iv) in Table~\ref{default3}.  
Under this circumstance, the vector $\nu$ with very large norm  can easily ruin the  computational accuracy  of  the  exponential functions  in $x$.

  \begin{table}
\caption{ Matrix balancing error   $\|M x-\1_{2n}\|$. }
\begin{center}
\begin{tabular}{|c|c|c|c|c|c|}\hline
$\xi_{\max}$  &  (i) $\Sigma$,  $K=20$ & (ii)$\Sigma$, $K=80$  & (iii) $\Sigma''$, $K=20$  & (iv)$\Sigma''$,  $K=80$ & (v) $\Pi_0$\\
\hline
2 & $2.27\times 10^{-5}$ &  $2.81\times 10^{-6}$ & inf & $2.55\times 10^2$ & $1.94\times 10^{-5}$ \\
4 & $5.56\times 10^{-6}$ &  $3.98\times 10^{-6}$ & inf &  inf & $1.55\times 10^{-5}$  \\
\hline
\end{tabular}
\end{center}
\label{default1}
\end{table}%

 \begin{table}
\caption{ Objective values   $\IF_t(Q,x)$.  Here ``NaN" stands for ``Not a number".}
\begin{center}
\begin{tabular}{|c|c|c|c|c|c|}\hline
$\xi_{\max}$  &  (i) $\Sigma$,  $K=20$ & (ii)$\Sigma$, $K=80$  & (iii) $\Sigma''$, $K=20$  & (iv)$\Sigma''$,  $K=80$ & (v) $\Pi_0$\\
\hline
2 & $5.81\times 10^{5}$ &  $1.57\times 10^{6}$ & NaN & $1.26\times 10^6$ & $1.02\times 10^{7}$ \\
4 & $6.35\times 10^{5}$ &  $1.48\times 10^{6}$ & NaN &  NaN & $1.02\times 10^{7}$  \\
\hline
\end{tabular}
\end{center}
\label{default2}
\end{table}%

  \begin{table}
\caption{The Frobenius norm  $\|M^\top \nu\|_F$ of dual vectors. }
\begin{center}
\begin{tabular}{|c|c|c|c|c|c|}\hline
$\xi_{\max}$  &  (i) $\Sigma$,  $K=20$ & (ii)$\Sigma$, $K=80$  & (iii) $\Sigma''$, $K=20$  & (iv)$\Sigma''$,  $K=80$ & (v) $\Pi_0$\\
\hline
2 & $3.30\times 10^3$ &  $3.42\times 10^3$ & $7.43\times 10^5$ & $7.96\times 10^5$ & $3.98\times 10^3$ \\
4 & $4.72\times 10^3$ &  $3.47\times 10^3$ & $7.42\times 10^5$ &  $7.63\times 10^5$ &  $3.98\times 10^3$\\
\hline
\end{tabular}
\end{center}
\label{default3}
\end{table}%

\subsection{Conclusion}
 Optimal transport, which is
 one assignment problem,  can be handled by many methods, including  the dual simplex method and the primal-dual methods.
With   negative entropy regularization, we can use matrix balancing algorithms to reach one approximate solution to optimal transport.  
 In the  study, we are concerned with  
 Newton method based matrix balancing algorithms to point-set matching problems,  i.e., SNNE and SNNE-sparse methods. 
 One advantage of SNNE is that 
    the method solely updates multiplier vectors along the increase of $t$, i.e., no need to store/pass  $x$ between each sub-problem.
   With the aid of sparse support,   SNNE-sparse  can be 
 a relatively convenient tool in solving  large-scale  point-set matching problems. 
  To  ensure the solution  quality from matrix balancing, 
 we employ one simple rule to update  these sparse support sets, in order to  meet    total support condition. With the aid of  total support assumption, 
 we can establish  the convergence of LB and its step size analysis, which sheds light on the convergence of KR. 

  \subsection{Data availability }
 The teapot dataset can be retrieved from  the matlab 3-D point cloud file,   `` pcread('teapot.ply')".
 The lung branch points of subject H6012 is available from the corresponding author upon request.
 \subsection{Acknowledgements}
We thank anonymous referees for helpful comments and suggestions that lead to improvement of the original manuscript.
 \appendix
 
\section{Appendix}

\subsection{Consistency  of (\ref{normal})}\label{A.1}

For $x>0$,   the null space of $M \diag(x)^2 M^\top$ has dimension $1$.

\begin{prop} Consider a positive vector $x\in \IR^{n^2}$ and a matrix $M$ in (\ref{M}). Then $M\diag(x)$ has rank $2n-1$ and \beqq\label{nulleq}
null(M \diag(x)^2 M^\top)=null(M^\top )=
span\{[\1_n; -\1_n ] \}.\eeqq
In addition, for each $r\in \IR^{n^2}$ and $x\in \IT^{-1}(\Pi_n)$,  the system \beqq\label{eq_24'} M \diag(x^2) M^\top u= M\diag(x^2)r\eeqq is consistent.
\end{prop}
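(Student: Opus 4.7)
The plan is to establish the three claims in the order stated, using a standard null-space / orthogonal-complement approach.

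For the rank and null-space identities, I would first directly inspect $M^\top$ using the explicit formula in~(\ref{Mt}): $M^\top [a;b] = 0$ means $a\,\1_n^\top + \1_n b^\top = 0$ as an $n\times n$ matrix, hence $a_i + b_j = 0$ for all $i,j$, which forces $a = c\,\1_n$ and $b = -c\,\1_n$ for some scalar $c$. This gives $null(M^\top) = \mathrm{span}\{[\1_n;-\1_n]\}$, so $M$ (and $M^\top$) has rank $2n-1$. Because $\diag(x)$ is invertible for $x>0$, the range of $M\diag(x)$ equals the range of $M$, so $M\diag(x)$ also has rank $2n-1$.

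To identify $null(M\diag(x)^2 M^\top)$, I would use the standard Gram-matrix trick: the inclusion $null(M^\top)\subset null(M\diag(x)^2 M^\top)$ is immediate, and for the reverse, if $M\diag(x)^2 M^\top u = 0$ then
\[
0 = \langle u, M\diag(x)^2 M^\top u\rangle = \|\diag(x) M^\top u\|^2,
\]
so $\diag(x) M^\top u = 0$, and since $x>0$ this forces $M^\top u = 0$. This establishes (\ref{nulleq}).

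For the consistency claim (\ref{eq_24'}), I would exploit symmetry: since $M\diag(x^2) M^\top$ is symmetric, its range is the orthogonal complement of its null space, so it suffices to check that $M\diag(x^2) r$ is orthogonal to every $u$ with $M\diag(x^2)M^\top u = 0$. For such $u$, the Gram-matrix trick again gives $\diag(x) M^\top u = 0$, hence $\diag(x^2) M^\top u = \diag(x)\,(\diag(x) M^\top u) = 0$, and therefore
\[
\langle u, M\diag(x^2) r\rangle = \langle \diag(x^2) M^\top u, r\rangle = 0.
\]
This shows $M\diag(x^2)r\in \mathrm{range}(M\diag(x^2)M^\top)$, which is exactly consistency. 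I do not anticipate any real obstacle here; the only subtlety is that $x\in\IT^{-1}(\Pi_n)$ merely requires $x\ge 0$ (entries may vanish), so the null space characterization of the first part need not apply verbatim to the consistency step, but the Gram-matrix argument still works because $\diag(x)M^\top u = 0$ already implies $\diag(x^2)M^\top u = 0$ without needing the entries of $x$ to be strictly positive.
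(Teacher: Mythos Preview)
Your proof is correct and follows essentially the same approach as the paper for the null-space identities: both compute $null(M^\top)$ directly from (\ref{Mt}) and use the Gram-matrix identity $\langle u, M\diag(x)^2 M^\top u\rangle = \|\diag(x)M^\top u\|^2$ to show $null(M\diag(x)^2 M^\top)=null(M^\top)$.

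The only difference is in the consistency step. The paper dispatches it in one line by observing that (\ref{eq_24'}) is precisely the normal equation for the least-squares problem $\min_u \|\diag(x)M^\top u - \diag(x)r\|^2$, and normal equations are always consistent. Your orthogonality argument is exactly the standard proof of that fact, unpacked; so the two arguments are the same in substance. Your version has the small bonus of making explicit that the consistency holds even when $x\in\IT^{-1}(\Pi_n)$ has zero entries, since you only need $\diag(x)M^\top u=0\Rightarrow \diag(x^2)M^\top u=0$, not invertibility of $\diag(x)$.
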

\begin{proof}

Suppose $M \diag(x)^2 M^\top u=0$ for some $u\in \IR^{2n}$. Then 
\beqq
0=\langle u,  M \diag(x)^2 M^\top u\rangle=\|\diag(x)M^\top u\|^2
\eeqq
 implies $\diag(x)M^\top u=0$, i.e., $M^\top u=0$.
Hence, $null(M \diag(x)^2 M^\top)\subseteq null(M^\top )$.
Besides,  write $u=[v; w]$ with some vectors $v\in \IR^n$ and $w\in \IR^n$. Since $M^\top u=0=\1_n w^\top+v \1_n^\top=0 $, then  $u_i+w_j=0$ for all $i,j=1,\ldots, n$, i.e., 
 $u_i=u_1=-w_j$ for all $i,j$. This establishes  \[ null(M \diag(x)^2 M^\top)\subseteq null(M^\top )\subseteq span\{[\1_n; -\1_n ]\}.\]
 On the other hand,  consider a vector  in the form $u=c[\1_n; -\1_n]$ with $c\in\IR$. Then 
  $M^\top u=c(\1_n \1_n^\top-\1_n  \1_n^\top)=0$ and  $u\in null(M \diag(x)^2 M^\top)$. This completes the proof of the first part.
  Finally, note that  (\ref{eq_24'}) is the associated normal equation to the least squares problem \beqq
\min_u \|\diag(x) M^\top u-\diag(x) r\|^2.\eeqq 
Hence,  (\ref{eq_24'})  is consistent. 
\end{proof}

\subsection{Early termination}\label{sec_ET}
The following  rounding procedure could  quickly  provide a  KKT candidate  point before the degeneracy of Schur complement matrices occurs. 
Suppose that one diagonal in  $\IT(x^{(t)})$ dominates other diagonals  for some $t$. Then we have
early termination of the interior point method, i.e., 
   a permutation matrix can be identified as one optimal solution from $\IT(x^{(t)})$.
For simplicity, the following discussion does not involve  support constraints. 

\begin{prop}\label{early} Let $\gamma'\in (0,1)$ and   $\gamma''\in (1,\infty)$. Let $ \widehat \nu=[\widehat \nu^{(1)}, \widehat \nu^{(2)}]$.
Let $(\widehat x,\widehat  \nu)$ be one approximate  KKT point to (\ref{KKTt}) for some $t>0$
with the entry wise bounds \beqq\label{eq_35}
\gamma' t^{-1} \le \widehat  x \odot  \widehat  s\le \gamma'' t^{-1},\;  \widehat  s=c-M^\top\widehat  \nu, 
\eeqq 
 Let  $X:=\IT( \widehat  x)$.
Suppose that for
 some 
 permutation $\cJ:\{1,2,\ldots, n\}\to \{1,2,\ldots, n\}$,\beqq\label{eq_22'}
X_{i,j}\le  \frac{\gamma' }{\gamma''} X_{i,\cJ(i)} \textrm{  for all $j\neq \cJ(i)$,}\eeqq 
Let  $\nu:=[\nu^{(1)};\nu^{(2)}]\in \IR^{2n}$ be given by 
\beqq\label{eq_21}
\nu^{(1)}(i):= c_{i,\cJ(i)}-\nu^{(2)}(\cJ(i)),
\textrm{ where }
\nu^{(2)}:=\widehat  \nu^{(2)}.
\eeqq
Let $\widetilde X$ be the permutation, 
 \beqq \widetilde X_{i, \cJ(i)}=1 \textrm{  and  } \widetilde X_{i, j}=0, \textrm{ $j\neq \cJ(i)$ }.  \eeqq
Then   $(\widetilde x,\nu)$ is one KKT point to (\ref{KKT}), where $\widetilde x:=\IT^{-1}(\widetilde X)$.
\end{prop}

\begin{proof}

The  condition in (\ref{eq_35}) ensures that  for all $i,j=1,\ldots, n$, 
\beqq\label{KKTnu}\epsilon_{i,j}:= t X_{i,j}(
c_{i,j}-(M^\top \widehat  \nu)_{i,j})\in (  \gamma',\gamma'').
\eeqq
In particular,  for $j=\cJ(i)$,
\beqq\label{eq_50'}
c_{i,\cJ(i)}-\widehat \nu^{(1)}(i)- \widehat \nu^{(2)}(\cJ(i))\le \gamma'' (t X_{i,\cJ(i)})^{-1}.
\eeqq
We shall prove that  (\ref{KKT}) holds under this $\nu$. Let $s:=c-M^\top \nu$. From the definition in (\ref{eq_21}),  it suffices to show $s_{i,j}\ge 0$ for all  entries with $j\neq \cJ(i)$. 
From (\ref{KKTnu}) and (\ref{eq_21}), we have 
\begin{eqnarray}
s_{i,j}&:=&c_{i,j}-(M^\top \nu)_{i,j}=c_{i,j}-\nu^{(1)}(i)-\nu^{(2)}(j)\\
&\ge &c_{i,j}-c_{i,\cJ(i)}+\widehat \nu^{(2)}(\cJ(i))-\widehat \nu^{(2)}(j)- \widehat \nu^{(1)}(i)+ \widehat \nu^{(1)}(i)
\\
&\ge & (t X_{i,j})^{-1} \epsilon_{i,j}-\gamma'' \frac{X_{i,j}}{X_{i,\cJ(i)}} (t X_{i,j})^{-1}\\
&\ge & (t X_{i,j})^{-1}\left (\epsilon_{i,j}
-\gamma'\right)
\ge 0,
\end{eqnarray}
where we used the assumption in (\ref{eq_22'}) and (\ref{eq_50'}).
\end{proof}

\subsection{Proof of Theorem~\ref{LBstepsize}. }
We shall prove  Theorem~\ref{LBstepsize}.
Recall $B_k$ and $C_k$ in (\ref{eq_A},\ref{eq_c}). In addition to $u_k$ in (\ref{eq_88}),  introduce a few notations:
 \beqq
 \lambda_k^2:=\langle ( B_k-I_{2n} )\1_{2n}, ( C_k +B_k  )^{\dagger} ( B_k-I_{2n}) \1_{2n}
\rangle,\eeqq
\beqq\label{eq162}
v_k:=( B_k-I_{2n}) \1_{2n},\; 
y_k:=-( C_k +B_k  )^{\dagger}( B_k-I_{2n}) \1_{2n}.
 \eeqq
 The LB iteration $\zeta_{k+1}$ is given by 
 \beqq
 \zeta_{k+1}=\zeta_k\odot +\alpha_k u_k=\zeta_k\odot (1-\alpha_k y_k)
 \eeqq
 for some step size $\alpha_k$ within $(0, y_{\max}^{-1} (1-\epsilon_+))$, where the safeguard parameter $\epsilon_+$ and $y_{\max}$ are defined in Remark~\ref{safe}.

Before we  proceed, we  prove  one crucial property:  positive upper  bounds exist for  $\{\|C_k+B_k\|\}_{k=1}^\infty$ and $\{\|(C_k+B_k)^\dagger \|\}_{k=1}^\infty$.
Let $\zeta_*$ be one minimizer of $\mbg(\zeta)$ and $\zeta_1$ be one starting point of LB. 
We  introduce  a set of matrices, 
 \beqq
S=\{ (\zeta^{(1)} {\zeta^{(2)}}^\top )\odot \1_\Sigma \in \IR^{n\times n} : \mbg(\zeta_1)\ge \mbg(\zeta)\ge \mbg(\zeta_*), \zeta=[\zeta^{(1)} ; \zeta^{(2)}] >0 \}.
\eeqq
 Then $S$ is compact from   Prop.~\ref{prop5.5}.
Introduce $H(\zeta)$ in (\ref{Hdef}).
Note that $H(\zeta_k)=C_k+B_k$. Let $w=[w^{(1)}; w^{(2)}]$. 
We have norm estimates for $H(\zeta)$,
\begin{eqnarray}
&&\| H(\zeta)\| \le \max_w \|w\|^{-2}\left(\sum_{j=1}^n \sum_{i=1}^n A_{i,j}\zeta^{(1)}_i \zeta^{(2)}_j   ({w^{(1)}_i}-{w^{(2)}_j})^2\right)\\
& \le&
\max_w \|w\|^{-2}
\left( \sum_{i=1}^n \sum_{j=1}^n A_{i,j} \zeta^{(1)}_i \zeta^{(2)}_j (1^2+1^2) ({w^{(1)}_i}^2+{w^{(2)}_j}^2)\right)\\
&=&2 \max \{\|  (\diag(\zeta^{(1)}) A \diag(\zeta^{(2)}) ) \1_n\|_\infty, \| (\diag(\zeta^{(1)}) A \diag(\zeta^{(2)}) )^\top \1_n\|_\infty\}.
\end{eqnarray}
We have the following upper bound, 
 \beqq
\| H(\zeta)\| \le 2 \left(\max_{(i,j)\in \Sigma} A_{i,j} \right)
\max\left( \| (\zeta^{(1)} {\zeta^{(2)}}^\top \odot \1_\Sigma)\1_n \|_\infty, \| (\zeta^{(1)} {\zeta^{(2)}}^\top \odot \1_\Sigma )^\top \1_n \|_\infty 
\right).
\eeqq
Thanks to Prop.~\ref{prop5.5},  
a  constant $\mbM$  exists
as  a  upper bound for  $\|H(\zeta)\|$.
On the other hand, for each $w$, we can express
\beqq
\langle  w, H(\zeta) w\rangle=
\langle A\odot (\zeta^{(1)} {\zeta^{(2)}}^\top\odot \1_\Sigma),
(w^{(1)}  \1_n^\top -\1_n {w^{(2)}}^\top )\odot (w^{(1)}  \1_n^\top -\1_n {w^{(2)}}^\top )
\rangle
\eeqq
as one function defined on $S$.
The null space of $H(\zeta)$ is  $\cN$ for each $\zeta>0$ from Prop.~\ref{prop3.3}.
Consider the following  function to characterize  the smallest positive eigenvalue  of $H(\zeta)$,
 \beqq
 \widehat H(\zeta):=\min_w \left\{ \frac{\langle w, H(\zeta) w\rangle }{\|w\|^2}: w \textrm{ is orthogonal to } \cN\right\}>0.
 \eeqq
Since $S$ is compact, then 
a positive  constant $\mbm$  exists as
 a lower bound for  the smallest positive eigenvalue  of $H(\zeta)$. Hence, $\|H(\zeta)^\dagger \|\le \mbm^{-1}$.  In summary,   for all $k$, we have \beqq\label{Mm}
  \|C_k+B_k\|\le \mbM, \|(C_k+B_k)^\dagger \|\le \mbm^{-1}.
  \eeqq
In addition,  $\|B_k\|\le \mbM_1$ holds
for some   constant $\mbM_1$.

The  convergence of LB can be established by  standard arguments in  section 9.5 in\cite{Boyd}.
Introduce a function of $\alpha$, \beqq  \mbgt(\alpha):=\mbg(\zeta_k+ \alpha u_k).\eeqq
Let $\mbgt'$ and $\mbgt''$ denote the first derivate and the second derivate of $\mbgt$, respectively.
Calculus shows \beqq
\nabla \mbg(\zeta)= \widetilde A \zeta-\zeta^{-1},\; 
\nabla^2 \mbg(\zeta)= \widetilde A +\diag(\zeta^{-2}).
\eeqq

\begin{prop}[Damped Newton phase]\label{A6} Let $\epsilon_+$ be the safeguard parameter in Remark~\ref{safe}.
Then 
\beqq
\lim_{k\to \infty}\lambda_k=0,\; \lim_{k\to \infty }\|y_k\|=0,\; \lim_{k\to \infty }\|v_k\|= 0.\eeqq
  \end{prop}
\begin{proof}
First, we show that the limit of step size interval in Remark~\ref{safe} is not zero. 
Indeed,    since $y_{\max}\le \|y_k\|\le \|(C_k+B_k)^\dagger \| \|(B_k-I_{2n}) \1_{2n}\|\le \mbm^{-1} (\mbM_1+1) \|\1_{2n}\|$
 for each $\zeta_k$ with $\mbg(\zeta_k)\le c_0$, then   $y_{\max}^{-1} (1-\epsilon_+)$  stays away from $0$ for each iteration.
Second,  
 we show 
\beqq
\mbgt(\alpha)- \mbgt(0)\le (-\alpha+\frac{\alpha^2}{2} (\mbM_1+\epsilon_+^{-2}) \mbm^{-1}) \lambda_k^2.
\eeqq
Indeed,  Taylor's formula indicates that  for some scalar $\widetilde \alpha\in [0,\alpha]$,
\begin{eqnarray}
&&\mbgt(\alpha)=\mbgt(0)+\mbgt'(0)\alpha+\mbgt''(\widetilde  \alpha)\frac{\alpha^2}{2}\\
&\le&  \mbg(\zeta)+\alpha \nabla \mbg(\zeta)^\top u_k+\frac{\alpha^2}{2} \|\diag(\zeta_k) (\nabla^2 \mbg(\zeta_k+\widetilde  \alpha u_k))\diag( \zeta_k)\| \|y_k\|^2\\
&\le &\mbg(\zeta)+\alpha (-\lambda_k^2)+\frac{\alpha^2}{2} (\|B_k\|+\| (1+\widetilde  \alpha y_k)^{-2}\|_\infty) \|y_k\|^2\\
&\le &\mbgt(0)+\alpha (-\lambda_k^2)+\frac{\alpha^2}{2} (\mbM_1+\epsilon_+^{-2}) \mbm^{-1} \lambda_k^2.
\end{eqnarray}
Together,  the step size $\alpha_k$ in   backtracking line search is bounded below by some positive constant. Since $\mbg(\zeta)$ is bounded below, then $\lambda_k^2$ must tend to $0$, as $k\to \infty$.
From  (\ref{eq162}), we have 
 $\|v_k\|^2\le \lambda_k^2 \|C_k+B_k \|\le \mbM \lambda_k^2$ and $\|y_k\|^2\le \|(C_k+B_k)^\dagger \| \lambda_k^2\le \mbm^{-1}\lambda_k^2$, which completes the proof. 
\end{proof}

\begin{prop}[$\alpha_k=1$ phase]
As $k$ is sufficiently large, we have $\alpha_k=1$.
\end{prop}
\begin{proof}Let  $\epsilon_+>0$ be 
the safeguard parameter in Remark~\ref{safe}.  Let  $L=\epsilon_+ ^{-3}$.
Since $u_k=-\zeta_k\odot y_k$,
\begin{eqnarray}
&&|  \mbgt''(\alpha)-\mbgt''(0)
|
\le | u_k^\top (\nabla^2\mbg( \zeta_k+\alpha u_k) -\nabla^2 \mbg(\zeta_k)) u_k |\\
&\le &
| u_k^\top (\diag(\zeta_k+\alpha u_k)^{-2}-\diag(\zeta_k^{-2})) u_k|=
 \left| y_k^\top \{(\frac{1}{1-\alpha y_k})^2-1\} y_k \right|\le
 \alpha L \|y_k\|^3.
\end{eqnarray}
Hence, 
$
\mbgt''(\alpha)\le \mbgt''(0)+\alpha L \|y_k\|^3.
$
By integration, we have
$
\mbgt'(\alpha)\le \mbgt'(0)+ \alpha \mbgt''(0)+ \frac{\alpha^2}{2} L\|y_k\|^3$,
and
\begin{eqnarray}
\mbgt(\alpha)- \mbgt(0)&\le& \alpha \mbgt'(0)+ \frac{\alpha^2}{2}  \mbgt''(0)+ \frac{\alpha^3}{6}L \|y_k\|^3\\
&\le &- \alpha \lambda_k^2+ \frac{\alpha^2}{2}  ( \lambda_k^2+\| v_k\|_\infty \|y_k\|^2)+ \frac{\alpha^3}{6}L \|y_k\|^3\\
&\le &\lambda^2(- \alpha + \frac{\alpha^2}{2}  ( 1+\| v_k\|_\infty  \mbm^{-1})+ \frac{\alpha^3}{6}L  \mbm^{-3/2}\lambda_k )\label{eq182}
\end{eqnarray}
where we used 
  \begin{eqnarray}
\mbgt ''(0)&=& \langle u_k, \nabla^2 \mbg(\zeta_k) u_k \rangle\\
&=&
\langle 
(C_k+B_k)^\dagger (B_k-I_{2n} ) \1_{2n}, (I_{2n}+B_k) (C_k+B_k)^\dagger (B_k-I_{2n} ) \1_{2n}
\rangle
\\
&=&\lambda_k^2+
\langle
 (C_k+B_k)^\dagger (B_k-I_{2n} ) \1_{2n}, (I_{2n} -C_k) (C_k+B_k)^\dagger (B_k-I_{2n} ) \1_{2n} \rangle 
\\
&\le &
\lambda_k^2+ \| C_k-I_{2n}\| \|y_k \|^2=\lambda_k^2+\| B_k \1_{2n}-\1_{2n}\|_\infty  \|y_k \|^2=\lambda_k^2+\|  v_k \|_\infty  \|y_k \|^2.
\end{eqnarray}

Take $\alpha=1$ in (\ref{eq182}). Using $\|y_k\|^2\le \lambda_k^2 \mbm^{-1}$ and $\|v_k\|_\infty\le \|v_k\|\le \mbM^{1/2} \lambda_k$ from (\ref{Mm}), we have
\beqq\label{eq167}
\mbgt(1)- \mbgt(0)\le   \frac{\lambda_k^2}{2}  \left( -1+(\mbM^{1/2}   \mbm^{-1}+ \frac{\alpha^3}{3}L  \mbm^{-3/2}) \lambda_k \right)
\eeqq
Note that  $\lim_{k\to \infty }\lambda_k=0$ from Prop.~\ref{A6}.
 When $\lambda_k$ is sufficiently close to $0$, $\alpha_k=1$ is  accepted by the backtracking line search.  That is,  for $k$ sufficiently large, (\ref{eq167}) indicates that \beqq\mbgt(1)- \mbgt(0)\le 
\beta \nabla \mbg(\zeta_k)^\top u_k=-\beta\lambda_k^2\eeqq
holds with backtracking parameter $\beta\in (0,1/2)$. 

\end{proof}

\bibliographystyle{alpha}
\bibliography{IPMB0}

%
%
%
%
%
\end{document}